\theoremstyle{definition}
\newtheorem{thm}{Theorem}[section]
\newtheorem{lem}{Lemma}[section]
\newtheorem{remark}{{Remark}}[section]
\def\mR{\mathbb{R}}
\def\mC{\mathbb{C}}
\def\tr{\mathrm{tr}}
\def\var{\mbox{var}}
\def\cov{\mbox{cov}}
\def\sign{\mbox{sign}}
\def\diag{\mbox{diag}}
\newcommand{\bSig}{\mbox{\boldmath $\Sigma$}}
\newcommand{\brho}{\bm{\rho}}
\newcommand{\bvare}{\bm{\varepsilon}}
\newcommand{\tbrho}{\widetilde{\bm{\rho}}}
\newcommand{\one}{\mathbf{1}}
\newcommand{\E}{\mathbb{E}}
\newcommand{\F}{\mathcal{F}}
\newcommand{\CC}{\mathcal{C}}
\def\H{{\bf H}}
\def\X{{\bf X}}
\def\g{{\bf g}}
\def\y{{\bf y}}
\def\Z{{\bf Z}}
\def\t{{\bf t}}
\def\R{{\bf R}}
\def\r{{\bf r}}
\def\A{{\bf A}}
\def\G{{\bf G}}
\def\B{{\bf B}}
\def\D{{\bf D}}
\def\M{{\bf M}}
\def\K{{\bf K}}
\def\U{{\bf U}}
\def\V{{\bf V}}
\def\e{{\bf e}}
\def\s{{\bf s}}
\def\bI{{\bf I}}
\newcommand{\trans}{^\top}
\def\defby {\stackrel{\mbox{\textrm{\tiny def}}}{=}}
\def\toas{\,{\buildrel a.s. \over \longrightarrow}\,}
\def\tod{\,{\buildrel d \over \longrightarrow}\,}
\begin{document}

\title{Large dimensional Spearman's rank correlation matrices: The central limit theorem and its applications}
\author{Hantao Chen\thanks{htchen2000@sjtu.edu.cn} }
\author{Cheng Wang\thanks{Corresponding author: chengwang@sjtu.edu.cn}}
\affil{School of Mathematical Sciences, MOE-LSC, Shanghai Jiao Tong University, Shanghai 200240, China.}
\date{\today}

\maketitle

\begin{abstract}
    This paper is concerned with Spearman's correlation matrices under large dimensional regime, in which the data dimension diverges to infinity proportionally with the sample size. We establish the central limit theorem for the linear spectral statistics of Spearman's correlation matrices, which extends the results of [\emph{Ann. Statist.} 43(2015) 2588--2623]. We also study the improved Spearman's correlation matrices [\emph{Ann. Math. Statist} 19(1948) 293--325] which is a standard U-statistic of order 3. As applications, we propose three new test statistics for large dimensional independent test and numerical studies demonstrate the applicability of our proposed methods.
\end{abstract}

\section{Introduction}
In multivariate statistical analyses, the covariance matrix is a fundamental tool used to describe the relationships among features. Its theoretical property is crucial for understanding many statistical methods. For the classical setting where the data dimension $p$ is fixed and the sample size $n$ tends to infinity, these properties and their applications in various methods are summarized in textbooks, e.g., \cite{anderson1984introduction}. 

In the last few decades, large amounts of work are focused on the large dimensional regime,
\begin{align}
    n\to\infty,\quad p=p(n)\to\infty,\quad p/n=y_n\to y\in(0,\infty).\label{large_dimensional}
\end{align}
Random matrix theory, as a powerful tool, provides insights into the behavior of large dimensional sample covariance matrices, extending the famous Wishart distribution theory. The pioneering work \cite{marvcenko1967distribution} derived the limiting spectral distribution (LSD) which is called Mar{\u{c}}enko-Pastur (MP) law. With the LSD, we can describe the limits of linear spectral statistics (LSS).  Furthermore, \cite{bai2004clt} firstly derived the central limit theorem (CLT) of LSS. The following works include \cite{pan2008central,anderson2008clt, lytova2009central, pan2014comparison, zheng2015substitution} and so on. As applications of RMT on the sample covariance matrix, \cite{dobriban2018high} and \cite{wang2018dimension} studied the prediction errors of ridge regression and regularized linear discriminant analysis; \cite{hastie2022surprises} demonstrated the double descent phenomenon in the simple linear regression; \cite{bai2009corrections} proposed a bias correction to the likelihood ratio test; \cite{wang2013identity} and \cite{wang2013sphericity} considered the identify test and the sphericity test of  covariance matrices, respectively. For more results on large dimensional covariance matrices, it is referred to \cite{paul2014random} and \cite{yao2015sample} for a comprehensive review.

Normalization is a common procedure in data analysis. By standardizing the sample covariance matrix, we obtain Pearson's correlation matrix, a scale-invariant measure. Recent research has extensively studied Pearson's correlation matrices. \cite{jiang2004limiting} first derived the limiting spectral distribution. \cite{bao2012tracy} and \cite{pillai2012edge} established limiting distributions for the extreme eigenvalues. \cite{mestre2017correlation} and \cite{gao2017high} developed the CLT of LSS of Pearson's correlation matrices. \cite{zheng2019test} extended the CLT to general covariance structures. See also \cite{jiang2019determinant} and \cite{parolya2024logarithmic}. For a large class of population distributions, \cite{el2009concentration} demonstrated that the spectral properties of Pearson's correlation matrices resemble those of sample covariance matrices.  Typically, these studies assume finite fourth moments for the features. However, for distributions with infinite fourth moments, such as heavy-tailed populations, the applicability of these results may require additional verification or may no longer hold. For instance, \cite{2023-AIHP1368} justified the CLT of log-determinant statistics of Pearson's correlation matrices and \cite{heiny2022limiting} discovered a new LSD result for heavy-tailed distributions. 

To address the challenges posed by heavy-tailed data, non-parametric statistics offer robust correlation measures. Among these, Spearman's rank correlation matrix and Kendall's rank correlation matrix are particularly popular due to their distribution-free nature, making them suitable for heavy-tailed data. Recent research has explored the properties of these rank-based correlation matrices. For example, \cite{leung2018testing} and \cite{wang2024rank} studied a class of rank-based U-statistics for independence test. In the realm of random matrix theory, \cite{bai2008large} and \cite{wu2022limiting} investigated the LSD of Spearman's correlation matrices. \cite{bandeira2017marvcenko} and \cite{li2023eigenvalues} studied the LSD of Kendall's correlation matrices. As far as the CLT, \cite{bao2015spectral} considered asymptotic distributions of polynomial functions of Spearman's correlation matrices and \cite{li2021central} studied the CLT of LSS of Kendall's correlation matrices. 

In this paper, we focus on Spearman's correlation matrices and aim to establish a central limit theorem for general linear spectral statistics. Due to introducing ranking, the independence among samples are violated and thus, we turn to consider Gram matrices.  The rescaled Gram matrix is a sample covariance matrix related to the distribution which are independent and uniformly distributed on the permutations of $\{1,\cdots,n\}$. In \cite{bao2015spectral}, they adopted the celebrated moment method and derived the CLT for polynomial functions. In this work, we follow the classical technique developed by \cite{bai2004clt} and consider the asymptotic distribution of Stieltjes transforms. Key challenges arise in computing the covariance of quadratic forms and establishing concentration inequalities for these forms. For uniform distribution on $\{1,\cdots,n\}$, it is challenging to derive the explicit covariance of quadratic forms. We derive the three leading terms which all contribute to the final CLT. More details can be found in our Lemma \ref{lem:second_moment} and Lemma \ref{lem:higher_moments_control}. The obtained results are consistent with \cite{bao2015spectral} for polynomial functions and are also applicable to more general LSS such as log-determinant functions. The resulting CLT of Stieltjes transform can connect to many other covariance or correlation matrices.  

In non-parametric statistics, \cite{Hoeffding1948} theoretical analyzed the Spearman's correlation from the perspective of U-statistics and proposed an improved version. Specifically, Spearman's correlation can be expressed as a U-statistics of order $3$ with an additional term. To address this, \cite{Hoeffding1948} introduced an improved Spearman's correlation which is a standard U-statistic of order $3$. Sample covariance matrices and Kendall's correlation matrices are well-known examples of U-statistics of order $2$, and their CLTs have been extensively studied in \cite{pan2014comparison} and \cite{li2021central}, respectively. To the best of our knowledge, there are no CLTs for general LSS of U-statistics of order higher than $2$. While the improved Spearman's correlation matrix is challenging to analyze directly, we can evaluate the difference between it and the classical Spearman's correlation matrix. This approach enables us to establish a CLT for standard U-statistics of order 3. This result is of interest for covariance/correlation matrices of U-statistic types and may contribute to the development of CLTs for LSS of general U-statistics of higher order.  

As applications of such CLTs, we revisit hypothesis testing for independence. Numerous studies have proposed various test statistics based on different correlation matrices, including \cite{jiang2004asymptotic}, \cite{zhou2007asymptotic}, \cite{gao2017high}, \cite{bao2015spectral}, \cite{leung2018testing}, \cite{bao2019tracy_kendall}, \cite{li2021central}. Our proposed test statistics fall into two categories: those based on Euclidean distance and those based on Stein's loss. Through extensive numerical experiments, we demonstrate the competitive performance of our proposed methods compared to well-established approaches.

Our contributions are summarized as follows:
\begin{enumerate}
	\item For Gram matrices, we study a novel population distribution which is uniformly distributed on the permutations of $\{1,\cdots,n\}$. Unlike the independent component model or elliptical distributions, the quadratic forms associated with this distribution exhibit a complex covariance structure. By carefully analyzing three leading terms, we derive a new central limit theorem.

	\item For Spearman's correlation matrices, we establish a CLT of general linear spectral statistics, extending the work of \cite{bao2015spectral} which focused on polynomial functions. Our approach, based on classical random matrix techniques and the Stieltjes transform, provides a more direct connection to other classical results, shedding light on the underlying structure of Spearman's correlation.
	
    \item From a U-statistic perspective, Spearman's correlation is not a standard U-statistic. \cite{Hoeffding1948} proposed an improved version which is a U-statistic of order 3. By carefully evaluating the difference between the classical and improved Spearman's correlation matrices, we derive the explicit impact on the asymptotic mean and establish a CLT for the improved Spearman's correlation matrix. As we know, this is the first CLT for standard U-statistic of order 3  in random matrix theory. 
    \item  Spearman's correlation matrices, derived from ranking and standardizing the original data matrix, can be viewed as both sample covariance and Pearson-type correlation matrices. From a U-statistic perspective, Spearman's correlation matrices are of order 3, while Kendall's correlation matrices are of order 2.  Thus, Spearman's correlation matrices can be connected with many existing random matrix models and the corresponding CLT results can also be connected with well-established CLT results. The obtained results allow us to gain deeper insights into the asymptotic distribution of linear spectral statistics for various sample covariance and correlation matrices.
\end{enumerate}

The remainder of the paper is structured as follows: Section 2 introduces the necessary background knowledge and tools from random matrix theory. Section 3 presents our main results, including the CLTs for Gram matrices, Spearman's correlation matrices, and improved Spearman's correlation matrices. Section 4 applies our theoretical results to hypothesis testing for independence and conducts numerical experiments to demonstrate the effectiveness of our proposed methods.  In Section 5, we summarize our CLTs with discussions and the Appendix provides detailed proofs of our theoretical results.

\section{Preliminary result in RMT}
Let $\H_n$ be any $n\times n$ Hermitian matrix with eigenvalues $\lambda_1 \geq \cdots \geq \lambda_n$. The empirical spectral distribution (ESD) is defined as 
\begin{align}
    F^{\H_n}(x)=\frac{1}{n}\sum_{i=1}^nI(\lambda_i\leq x), 
\end{align}
where $I(\cdot)$ is the indicator function. If $F^{\H_n}$ converges weakly to some limiting distribution $F$, then we call $F$ the limiting spectral distribution of $\H_n$. 

With the LSD, we can study the linear spectral statistic  which is defined as
\begin{align*}
    \frac{1}{n} \sum_{i=1}^n f(\lambda_i)=\int f(x) dF^{\H_n}(x).
\end{align*}
Here $f(\cdot)$ is any bounded and continuous function. By the property of weak convergence, we can conclude 
\begin{align*}
    \int f(x) dF^{\H_n}(x) \to \int f(x) dF(x).
\end{align*}
Some common functions in statistics include 
\begin{align*}
    \frac{1}{n} \sum_{i=1}^n \lambda^k_i=&\frac{1}{n}\tr(\H^k_n),~k=1,2,\cdots,\\
    \frac{1}{n} \sum_{i=1}^n (\lambda_i-1)^2=&\frac{1}{n} \|\H_n-\bI_n\|_F^2,\\   
    \frac{1}{n} \sum_{i=1}^n \lambda_i-\log(\lambda_i)-1=&\frac{1}{n}\tr(\H_n)-\frac{1}{n}\log\det(\H_n)-1, 
\end{align*}
and so on. If $\H_n$ is a random matrix, we can further consider the central limit theorem of linear spectral statistics.

In random matrix theory, one of the most powerful tools is Stieltjes transform, which is defined as 
\begin{align}
    m_F(z)=\int\frac{1}{x-z}dF(x), \quad z\in\mC^+,
\end{align}
with respect to any distribution function $F$. Here $\mC^+$ is the upper half space of the complex plane. Similar to the characteristic function in probability, there is a one-to-one correspondence between the probability distribution and its Stieltjes transform, e.g.,
\begin{align*}
    F_n \tod F~\iff m_{F_n}(z) \to m_F(z),~\forall~z\in\mC^+.
\end{align*}
With the Stieltjes transform, by the residue theorem of complex analysis,
\begin{align*}
    \frac{1}{n}\sum_{i=1}^n f(\lambda_i)=\int f(x) dF^{\H_n}(x)=-\frac{1}{2\pi i}\oint_{\mathcal{C}} f(z)m_{F^{\H_n}}(z)dz,
\end{align*}
where $\oint_{\mathcal{C}}$ is closed and taken in the positive direction, enclosing the support of $F^{\H_n}$. Furthermore, we can study the asymptotic distribution, e.g.,
\begin{align*}
    \int f(x) dF^{\H_n}(x)-\int f(x) dF(x)  =\frac{1}{2\pi i}\oint_{\mathcal{C}} f(z)\left(m_F(z)-m_{F^{\H_n}}(z)\right)dz.
\end{align*}

In summary, to find the LSD of a random matrix $\H_n$, we can study its Stieltjes transform
\begin{align*}
    m_{F^{\H_n}}(z)=\frac{1}{n}\tr\left(\H_n-z \bI_n\right)^{-1}.
\end{align*}
To explore the asymptotic distribution of the LSS, we need to find the asymptotic distribution of 
\begin{align*}
    m_F(z)-m_{F^{\H_n}}(z),  
\end{align*}
which is usually a Gaussian process. The Gaussian process further yields the asymptotically normal distribution of the LSS. It is referred to \cite{bai2010spectral} for a comprehensive survey on random matrix theory. 

\section{Main result}
For independent and identically distributed (i.i.d.) samples $\X_1,\ldots,\X_n\in\mR^p$,   
we denote their rank statistics as $\r_k=(r_{i1},\cdots,r_{ip})\trans,~i=1,\ldots,n$. For each feature $j \in \{1,\cdots,p\}$, $(r_{1j},\ldots,r_{nj})$ are  uniformly distributed on the permutations of $\{1,\cdots,n\}$. Then, 
\begin{align*}
    \E r_{ij}=\frac{n+1}{2},~\var(r_{ij})=\frac{n^2-1}{12}.
\end{align*}
With the rank statistics, the Spearman's rank correlation matrix is defined by
\begin{align}\label{form:spearman's_rho}
    \brho_n=\frac{12}{n(n^2-1)}\sum_{k=1}^{n}(\r_k-\frac{n+1}{2}\one_p)(\r_k-\frac{n+1}{2}\one_p)\trans,
\end{align}
which is the Pearson's correlation matrix based on $\r_1,\ldots,\r_n \in \mR^p$. Due to ranking, $\r_1,\ldots,\r_n \in \mR^p$ are not independent anymore and it is hard to tackle the Spearman's rank correlation matrix directly. Here we turn to its Gram matrix.

\subsection{Gram matrix}
Standardizing rank statistics, we denote
\begin{align*}
\sqrt{\frac{12}{n^2-1}}\begin{pmatrix}
r_{11}-\frac{n+1}{2}&\cdots&r_{1p}-\frac{n+1}{2}\\
\vdots&\cdots&\vdots\\
r_{n1}-\frac{n+1}{2}&\cdots&r_{np}-\frac{n+1}{2}
\end{pmatrix}\defby  \begin{pmatrix}
    \s_1,&\cdots,&\s_p
\end{pmatrix}.
\end{align*}
If the features are completely independent, $\s_1,\ldots,\s_p \in \mR^n$ are i.i.d. 
and have been centered, e.g.,
\begin{align*}
    \E \s_i=\mathbf{0}_n,~\bSig\defby  \cov(\s_1)=\frac{n}{n-1}\left(\bI_n-\frac{1}{n}\one_n\one_n\trans\right).
\end{align*}
Then, we can study the sample covariance matrix of $\s_1,\ldots,\s_p$, 
\begin{align}\label{form:gram-type}
    \g_n=\frac{1}{p}\sum_{i=1}^p\s_i\s_i\trans.
\end{align}
This sample covariance matrix can also be regarded as the Gram matrix of the original rank statistics, that is,
\begin{align*}
     \g_n=\frac{12}{p(n^2-1)}\left( (\r_k-\frac{n+1}{2}\one_p)\trans(\r_k-\frac{n+1}{2}\one_p)\right)_{n \times n}.
\end{align*}
Thus, $\g_n$ and $\brho_{n}/y_n$ share the same non-zero eigenvalues.

Denoting $m_n(z)$ as the Stieltjes transforms of $\brho_{n}$ i.e.,
\begin{align*}
    m_n(z)=\frac{1}{p}\tr(\brho_n-z \bI_p)^{-1},
\end{align*}
it is proven in \cite{bai2008large} that $m_n(z) \to m(z)$ almost surely and 
\begin{align}
m(z)=\frac{1-y-z+\sqrt{(1+y-z)^2-4y}}{2yz}.
\end{align}
This result shows that the LSD of $\brho_n$ converges weakly to the M-P law $F_{y}$ almost surely, whose density function is
\begin{align*}
    p_y(x)=\frac{\sqrt{(x-(1-\sqrt{y})^2)((1+\sqrt{y})^2-x)}}{2\pi xy}I\left((1-\sqrt{y})^2<x<(1+\sqrt{y})^2\right),
\end{align*}
for $y\leq 1$ and has a point mass $1-1/y$ at origin for $y>1$.

We further denote $s_n(z)$ and $\underline{s}_n(z)$ as the Stieltjes transform of $\g_n$ and $\brho_n/y_n$, respectively
\begin{align*}
    &s_n(z)=\frac{1}{n}\tr(\g_n-z \bI_n)^{-1}=y_n^2 m_n(y_n z)-\frac{1-y_n}{z},\\
    &\underline{s}_n(z)=\frac{1}{p}\tr(\frac{\brho_n}{y_n}-z\bI_p)^{-1}=\frac{1}{y_n}(s_n(z)+\frac{1}{z})-\frac{1}{z},
\end{align*}
and almost surely 
\begin{align*}
    s_n(z) \to s(z)
    =&\frac{1-y_0-z+\sqrt{(1+y_0-z)^2-4y_0}}{2y_0z},\\
    \underline{s}_n(z)\to\underline{s}(z)=&\frac{-(1-y_0+z)+\sqrt{(1+y_0-z)^2-4y_0}}{2z},
\end{align*}
where $y_0=1/y$. Then, the LSD of $\g_n$ converges weakly to the M-P law $F_{y_0}$ almost surely.

For the LSS of $\g_{n}$,
\begin{align*}
    \int f(x)dF^{\g_{n}}(x)=\frac{1}{n}\sum_{i=1}^nf\left(\lambda_i(\g_{n})\right),
\end{align*}
where $f$ is an analytic function and $\lambda_1(\g_{n}) \geq\cdots \geq \lambda_n(\g_{n})$ are eigenvalues of $\g_n$, we have almost surely 
\begin{align*}
   \int f(x)dF^{\g_{n}}(x) \to  \int f(x)dF_{y_0}(x).
\end{align*}
Further, we study the asymptotic distribution of the LSS. Let 
\begin{align*}
    G_n(x)=n\left(F^{\g_{n}}(x)-F_{n/p}(x)\right),
\end{align*}
and we focus on 
\begin{align}
    \int f(x)dG_n(x)=n\left(\int f(x)dF^{\g_{n}}(x)-\int f(x)dF_{n/p}(x)\right)\label{form:deviation}.
\end{align}
Our central limit theorem is presented as follows.
\begin{thm}\label{thm:LSS_clt}
Assume that $\{X_{ij}: i=1,\ldots,n; j=1,\ldots,p\}$ are doubly independent and absolutely continuous with respect to the Lebesgue measure. Let $f_1,\ldots,f_k$ be functions on $\mR$ and analytic on an open interval containing 
    \begin{align}
        [I(y_0<1)(1-\sqrt{y_0})^2,(1+\sqrt{y_0})^2].\label{form:support}
    \end{align}
Then, as $n/p \to y_0 \in (0,\infty)$, the random vector 
    \begin{align*}
        \left(\int f_1(x)dG_n(x),\cdots,\int f_k(x)dG_n(x)\right)
    \end{align*}
converges weakly to a Gaussian vector $(G_{f_1},\cdots,G_{f_k})$ with the asymptotic mean 
    \begin{align*}
        \E G_{f}=-\frac{1}{2\pi i}\oint_{\mathcal{C}} f(z)\mu(z)dz,
    \end{align*}
and the asymptotic covariance function 
    \begin{align*}
        \cov(G_{f},G_{g})=-\frac{1}{4\pi^2}\oint_{\mathcal{C}_1} \oint_{\mathcal{C}_2}f(z_1)g(z_2)\sigma(z_1,z_2)dz_1dz_2,
    \end{align*}
    where 
    \begin{align*}
        \mu(z)=&\frac{y_0\underline{s}^3(z)\left(1+\underline{s}(z)\right)}{\left(\left(1+\underline{s}(z)\right)^2-y_0\underline{s}^2(z)\right)^2}-\frac{2y_0\underline{s}^3(z)}{\left(\left(1+\underline{s}(z)\right)^2-y_0\underline{s}^2(z)\right)\left(1+\underline{s}(z)\right)}\\
        &+\frac{\underline{s}^3(z)}{\left(1+\underline{s}(z)\right)^2-y_0\underline{s}^2(z)},\\
    \sigma(z_1,z_2)=&\frac{2\underline{s}'(z_1)\underline{s}'(z_2)}{\left(\underline{s}(z_1)-\underline{s}(z_2)\right)^2}-\frac{2}{(z_1-z_2)^2}-\frac{2y_0\underline{s}'(z_1)\underline{s}'(z_2)}{\left(1+\underline{s}(z_1)\right)^2\left(1+\underline{s}(z_2)\right)^2}.
    \end{align*}
    The contour $\oint_{\mathcal{C}}$ is closed and taken in the positive direction, each enclosing the support \eqref{form:support}.
\end{thm}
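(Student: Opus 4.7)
The plan is to follow the classical Bai-Silverstein Stieltjes-transform route, adapted to the fact that the columns $\s_j$ are standardized uniform permutations of $\{1,\ldots,n\}$ rather than vectors with i.i.d.\ entries. I start from the Cauchy integral representation
\[
\int f(x)\,dG_n(x) = -\frac{1}{2\pi i}\oint_{\mathcal{C}} f(z)\,M_n(z)\,dz,\qquad M_n(z) := n\bigl(\underline{s}_n(z)-\underline{s}_{y_n}(z)\bigr),
\]
where $\underline{s}_{y_n}$ denotes the companion Stieltjes transform associated with the empirical ratio $y_n$, and $\mathcal{C}$ encloses the limiting support \eqref{form:support}. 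By a standard continuous-mapping argument, the CLT for $\int f\,dG_n$ reduces to weak convergence of $M_n(\cdot)$ as a process on a contour in $\mC^+$ outside the support; tightness on such a contour follows once a uniform second-moment bound on $M_n(z)$ is in hand.

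I then split $M_n(z) = M_n^{(c)}(z) + M_n^{(d)}(z)$ into the centered random part $M_n^{(c)} := n(\underline{s}_n - \E\underline{s}_n)$ and the deterministic bias $M_n^{(d)} := n(\E\underline{s}_n - \underline{s}_{y_n})$. For $M_n^{(c)}$ I use the martingale decomposition with filtration $\F_j := \sigma(\s_1,\ldots,\s_j)$,
\[
M_n^{(c)}(z) = \sum_{j=1}^{p}(\E_j - \E_{j-1})\,\tr(\g_n - z\bI_n)^{-1},
\]
and isolate the contribution of $\s_j$ via the Sherman-Morrison identity, so that each martingale difference $\xi_j(z)$ becomes an explicit quadratic form in $\s_j$ against a deterministic matrix built from the remaining columns. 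The martingale CLT then reduces the task to computing the limit of the conditional quadratic variation $\sum_{j} \E_{j-1}[\xi_j(z_1)\xi_j(z_2)]$ and verifying a Lindeberg negligibility condition.

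The principal obstacle is precisely this covariance computation. Because $\s_j$ is a standardized uniform permutation, one has the deterministic constraint $\one_n\trans\s_j=0$, so the familiar i.i.d.\ formula $\var(\s\trans A\s) = 2\tr(A\bSig A\bSig) + (\kappa-3)\sum_i(A\bSig)_{ii}^2$ is no longer valid. A direct enumeration of the joint moments of uniform permutations (exploiting exchangeability together with the identity $\E s_{ji}s_{jk}=-1/(n-1)$ for $i\neq k$ and the explicit fourth-order joint moments) produces three leading-order terms in $\cov(\s\trans A\s, \s\trans B\s)$ rather than one, and all three survive in the proportional regime $n/p \to y_0$; this is the content foreshadowed by Lemma \ref{lem:second_moment}. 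The matching Lindeberg condition for the martingale CLT and the tightness of $M_n^{(c)}$ follow from the higher-moment concentration bound announced as Lemma \ref{lem:higher_moments_control}. Combining these inputs with the fixed-point relation $z = -1/\underline{s}(z) + y_0/(1+\underline{s}(z))$ to simplify the limiting kernel yields $\sigma(z_1,z_2)$, in which the extra summand $-2y_0\underline{s}'(z_1)\underline{s}'(z_2)/[(1+\underline{s}(z_1))(1+\underline{s}(z_2))]^2$ relative to the Gaussian-MP case is exactly the contribution of the permutation constraint.

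For the deterministic part $M_n^{(d)}(z)$, I apply the resolvent identity to $(\g_n-z\bI_n)^{-1}$ and, using Sherman-Morrison once more together with the centered quadratic-form expansions, derive a perturbed fixed-point equation for $\E\underline{s}_n(z)$. Expanding this equation to order $1/n$ around the Mar\v{c}enko-Pastur equation and substituting the three-term second-moment expansion gives the bias $\mu(z)$, whose three summands correspond one-to-one to the three leading terms in $\cov(\s\trans A\s,\s\trans B\s)$: the first reproduces the classical MP correction, the second is the cross contribution between the ordinary covariance and the rank-one perturbation $\one\one\trans/n$ in $\bSig$, and the third is the residual produced by that perturbation. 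Plugging $\mu$ and $\sigma$ back into the two contour integrals and invoking the continuous-mapping step to lift finite-dimensional convergence of $M_n$ to weak convergence of $\int f\,dG_n$ then completes the proof.
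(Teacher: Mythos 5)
Your overall architecture coincides with the paper's: the Cauchy-integral reduction to $M_n(z)$, the split into a centered martingale part (Sherman--Morrison plus the martingale CLT, with the quadratic-variation limit computed from the three-term covariance of permutation quadratic forms as in Lemma \ref{lem:second_moment} and the Lindeberg/tightness estimates from Lemma \ref{lem:higher_moments_control}) and a deterministic bias handled through a perturbed Mar\v{c}enko--Pastur fixed-point equation. The one genuine gap is your treatment of the contour. You place $M_n(\cdot)$ ``on a contour in $\mC^+$ outside the support,'' but the Cauchy integral needs a \emph{closed} contour enclosing all eigenvalues of $\g_n$, which necessarily crosses the real axis. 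Two ingredients are then indispensable and absent from your plan: (i) a rigidity statement that with probability $1-o(n^{-m})$ for every $m$ no eigenvalue of $\g_n$ escapes a neighbourhood of \eqref{form:support} --- this is Lemma \ref{lem:extreme_eigen_control}, and for the permutation model it is not a routine moment bound but rests on the strong local law / Tracy--Widom analysis for Spearman matrices cited there; and (ii) the truncation of $M_n$ at height $n^{-1}\varepsilon_n$ above the real axis (the process $\widehat M_n$), without which the uniform bounds such as $\E\|\D^{-1}(z)\|^k\le C$ for $z$ near the real segments of the contour, hence your ``uniform second-moment bound'' for tightness, are not available. Without (i) and (ii) both the validity of the contour representation for all large $n$ and the tightness step are incomplete.

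A minor bookkeeping point: your attribution of the middle summand of $\mu(z)$ to a ``cross contribution between the ordinary covariance and the rank-one perturbation of $\bSig$'' does not match how the terms arise. In the paper, $\mu_2$ is produced jointly by the Hadamard term $\tr(\A\circ\B)$ and the $\tr(\A)\tr(\B)/n$ term of Lemma \ref{lem:second_moment}, while the entire third summand $\mu_3$ (and only it) comes from the discrepancy between $\bSig$ and $\bI_n$, i.e.\ from comparing the Mar\v{c}enko--Pastur equations associated with $F^{\bSig}$ and with the identity. Since you do propose substituting the full three-term covariance expansion, a careful execution would still land on the stated $\mu$ and $\sigma$, but the one-to-one correspondence you assert is not the correct accounting.
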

For concrete functions such as logarithms and polynomials, we will derive CLTs in next section. In details, the integral involving $\underline{s}(z)$ can be calculated explicitly for most cases. Furthermore, we present a remark on the asymptotic mean and covariance.
\begin{remark} \label{remark1}
Regarding $\s_1,\ldots,\s_p$ as an i.i.d. sample from the population distribution $\s \in \mR^n$ with 
\begin{align*}
    \E \s=\mathbf{0}_n,~\cov(\s)=\bSig=\frac{n}{n-1}\left(\bI_n-\frac{1}{n}\one_n\one_n\trans\right),
\end{align*}
we have
\begin{align*} 
    \cov(\s \trans\A\s,\s \trans\B\s)=2\tr(\A\B)-\frac{6}{5}\tr(\A\circ\B)-\frac{4}{5n}\tr(\A)\tr(\B)+O\left(\|\A\|\|\B\|\right),
\end{align*}
where $\A$ and $\B$ are non-random $n\times n$ symmetric matrices. Denoting 
\begin{align*} 
\mu(z)\defby  \mu_1(z)+\mu_2(z)+\mu_3(z),~\sigma(z_1,z_2)\defby  \sigma_1(z_1,z_2)+\sigma_2(z_1,z_2)+\sigma_3(z_1,z_2),
\end{align*}
$\mu_1(z)$ and $\sigma_1(z_1,z_2)+\sigma_2(z_1,z_2)$ firstly appeared in the seminal work of \cite{bai2004clt} which are due to the term $2\tr(\A\B)$.  
The second term $\tr(\A\circ\B)$ was firstly studied by \cite{pan2008central}. The third term $\tr(\A)\tr(\B)/n$ is new which has similar effect as $\tr(\A\circ\B)$ under the current setting and these two terms contributes to $\mu_2(z)$ and $\sigma_3(z_1,z_2)$ together. The remaining $\mu_3(z)$ comes from the difference between $\bSig$ and $\bI_n$. More specifically, the Mar\u{c}enko-Pastur equation of $F_{n/p}$ is
\begin{align*}
    s_n^{(0)}(z)=\frac{1}{(1-\frac{n}{p}-\frac{n}{p} z s_n^{(0)}(z))-z}.
\end{align*}
For $\bSig$ which has $n-1$ eigenvalues equal to $n/(n-1)$ and one zero eigenvalue, the corresponding Mar\u{c}enko-Pastur equation is 
\begin{align*}
    s_n^{(1)}(z)=&\int\frac{1}{t(1-\frac{n}{p}-\frac{n}{p}z s_n^{(1)}(z))-z}dF^{\bSig}(t)\\
    =&\frac{n-1}{n}\left[\frac{n}{n-1}\left(1-\frac{n}{p}-\frac{n}{p}zs\right)-z\right]^{-1}-\frac{1}{nz}.
\end{align*}
Through careful calculation, we can conclude 
\begin{align*}
    n \left(s_n^{(1)}(z)- s_n^{(0)}(z)\right) \to \frac{\underline{s}^3(z)}{\left(1+\underline{s}(z)\right)^2-y_0\underline{s}^2(z)} \defby  \mu_3(z).
\end{align*}
More details can be found in the proof, e.g., the equation \eqref{form:expect_other_hand}.
\end{remark}

\subsection{Spearman's rank correlation matrix}
For Spearman's rank correlation matrix $\brho_n \in \mR^{p \times p}$ which has the same non-zero eigenvalues as the ones of $y_n \g_n \in \mR^{n \times n}$, we have
\begin{align*}
    \int f(x)dF^{\brho_n}(x)=&\frac{1}{p}\sum_{i=1}^{p}f(\lambda_i(\brho_n))=\frac{1}{p}\sum_{i=1}^{n}f(y_n\lambda_i(\g_n))+\frac{p-n}{p}f(0)\\
    =& \frac{1}{y_n } \int f(y_n x) d F^{\g_n}(x)+\frac{p-n}{p}f(0).
\end{align*} 
In addition, by the property of M-P law,
\begin{align*}
    \int f(y_nx)dF_{n/p}(x)=y_n\int f(x)dF_{y_n}(x)+(1-y_n)f(0),
\end{align*}
which yields 
\begin{align*}
    \int f(x)dF_{y_n}(x)= \frac{1}{y_n }\int f(y_nx)dF_{n/p}(x)+\frac{p-n}{p}f(0).
\end{align*}
Therefore,  we can study the asymptotic distribution of 
\begin{align*}
    T(f)\defby  p\left(\int f(x)dF^{\brho_n}(x)-\int f(x)dF_{y_n}(x)\right)=\int f(y_nx)dG_n(x).
\end{align*}

By Theorem \ref{thm:LSS_clt}, we have proven the CLT of $\int f(yx)dG_n(x)$ and the remaining is to show
\begin{align*}
    \int f(y_nx)dG_n(x)- \int f(yx)dG_n(x) =o_p(1),
\end{align*} 
whose details can be found in the proof. Based on these observations, we state the CLT for $\brho_n$ in the following theorem.
\begin{thm}\label{thm:LSS_clt_2}
Assume that $\{X_{ij}: i=1,\ldots,n; j=1,\ldots,p\}$ are doubly independent and absolutely continuous with respect to the Lebesgue measure.  Let $f_1,\cdots,f_k$ be functions analytic on an open interval containing
    \begin{align}
        \left[I(y<1)(1-\sqrt{y})^2,(1+\sqrt{y})^2\right].\label{form:support_2}
    \end{align}
    Then, as $p/n \to y \in (0,\infty)$, the random vector $(T(f_1),\cdots,T(f_k))$ converges weakly to a Gaussian vector $(Z_{f_1},\cdots,Z_{f_k})$ with mean function 
    \begin{align}
        \E Z_{f}=-\frac{1}{2\pi i}\oint_{\mathcal{C}'} f(yz)\mu(z)dz=-\frac{1}{2\pi i}\oint_{\mathcal{C}} f(z)\frac{\mu(z/y)}{y}dz,\label{form:mean_function_2}
    \end{align}
    and covariance function 
    \begin{align}
        \cov(Z_{f},Z_{g})=-\frac{1}{4\pi^2}\oint_{\mathcal{C}'_1}\oint_{\mathcal{C}'_2} f(yz_1)g(yz_2)\sigma(z_1,z_2)dz_1dz_2, \nonumber\\
        =-\frac{1}{4\pi^2}\oint_{\mathcal{C}_1}\oint_{\mathcal{C}_2} f(z_1)g(z_2)\frac{\sigma(z_1/y,z_2/y)}{y^2}dz_1dz_2,
        \label{form:cov_function_2}
    \end{align}
    where $\mu(z)$ and $\sigma(z_1,z_2)$ are defined in Theorem \ref{thm:LSS_clt}, and the contour $\oint_{\mathcal{C}}$ is closed and taken in the positive direction, each enclosing the support \eqref{form:support_2}.
\end{thm}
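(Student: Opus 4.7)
The plan is to reduce Theorem \ref{thm:LSS_clt_2} to Theorem \ref{thm:LSS_clt} using the identity
\begin{align*}
T(f)\;=\;\int f(y_n x)\, dG_n(x)
\end{align*}
already derived above. The argument proceeds in three steps: apply Theorem \ref{thm:LSS_clt} to the rescaled functions $g_j(x)\defby f_j(yx)$; replace $y_n$ by $y$ inside the integrand at $o_p(1)$ cost; and convert the resulting contour integrals to the $\brho_n$-support form via the substitution $w=yz$.

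For the first step, analyticity of $f_j$ on a neighborhood of $[I(y<1)(1-\sqrt{y})^2,(1+\sqrt{y})^2]$ yields analyticity of $g_j$ on a neighborhood of $[I(y_0<1)(1-\sqrt{y_0})^2,(1+\sqrt{y_0})^2]$, which is the support \eqref{form:support} needed by Theorem \ref{thm:LSS_clt}. When $y<1$, the set \eqref{form:support} contains $0$ whereas the domain of $f_j$ need not; however, for $p<n$ both $F^{\g_n}$ and $F_{n/p}$ place mass $(n-p)/n$ at the origin, so $G_n$ has no atom there and $\int g_j\,dG_n$ is insensitive to any analytic extension of $g_j$ across $0$. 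Applying Theorem \ref{thm:LSS_clt} then yields joint Gaussian convergence of $(\int f_j(yx)\,dG_n(x))_{j=1}^k$ with the mean and covariance appearing in \eqref{form:mean_function_2}--\eqref{form:cov_function_2} in their first, $\mathcal{C}'$-based form.

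For the second step, Cauchy's formula gives
\begin{align*}
\int[f_j(y_n x)-f_j(y x)]\,dG_n(x) = -\frac{1}{2\pi i}\oint_{\mathcal{C}'}[f_j(y_n z)-f_j(y z)]\,M_n(z)\,dz,
\end{align*}
where $M_n(z)=\int(x-z)^{-1}\,dG_n(x)$ and $\mathcal{C}'$ is a fixed contour enclosing \eqref{form:support} that lies in the common analyticity domain of $f_j(y_n\cdot)$ and $f_j(y\cdot)$ for all large $n$ (possible since $y_n\to y$). One has $\sup_{z\in\mathcal{C}'}|f_j(y_n z)-f_j(y z)|=O(|y_n-y|)=o(1)$, and since the Stieltjes-transform process $M_n(\cdot)$ is tight on $\mathcal{C}'$ (this is the functional version of Theorem \ref{thm:LSS_clt}, established en route to its proof), the discrepancy integral is $o_p(1)$. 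Slutsky's theorem combined with Step 1 then gives the joint CLT for $(T(f_1),\ldots,T(f_k))$.

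The final step, the change of variables $w=yz$, deforms $\mathcal{C}'$ into a contour $\mathcal{C}$ enclosing \eqref{form:support_2} and produces the Jacobians $1/y$ and $1/y^2$ that recast the limit into the second forms of \eqref{form:mean_function_2}--\eqref{form:cov_function_2}. The one nontrivial input beyond Theorem \ref{thm:LSS_clt} itself is the uniform-on-$\mathcal{C}'$ tightness of $M_n$ used in the $o_p(1)$ bound, which is stronger than finite-dimensional convergence; this upgrade is standard in the Bai--Silverstein framework and follows from the same moment estimates driving Theorem \ref{thm:LSS_clt}. Once granted, the remaining steps are bookkeeping.
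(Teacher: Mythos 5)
Your proposal follows essentially the same route as the paper: write $T(f)=\int f(y_nx)\,dG_n(x)$, apply Theorem~\ref{thm:LSS_clt} (via its contour-integral form), replace $y_n$ by $y$ at $o_p(1)$ cost using uniform convergence of $f(y_n\cdot)\to f(y\cdot)$ together with the $L^1$-boundedness of the truncated Stieltjes process $\widehat{M}_n$, and then substitute $w=yz$ to recast the mean and covariance. The paper carries out precisely these steps in Section~6.1.

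One point deserves tightening. You correctly flag that for $y<1$ the Gram-matrix support \eqref{form:support} contains $0$ while $f_j$ need only be analytic near $[(1-\sqrt{y})^2,(1+\sqrt{y})^2]$, a mismatch the paper quietly assumes away with the clause ``if $f(y\cdot)$ is analytic on an open interval containing \eqref{form:support}.'' Your resolution -- the atoms of $F^{\g_n}$ and $F_{n/p}$ at the origin cancel exactly inside $G_n$ -- is the right idea, but the phrase ``insensitive to any analytic extension of $g_j$ across $0$'' is imprecise: such an extension need not exist (e.g.\ $f=\log$). The clean statement is that, by Lemma~\ref{lem:extreme_eigen_control}, the nonzero eigenvalues of $\g_n$ stay above any $\eta_l<(1-\sqrt{y_0})^2$ with overwhelming probability, and the origin atoms cancel exactly; hence the Cauchy contour in \eqref{form:cauchy_integral_formula} can be shrunk to exclude a neighborhood of $0$ without changing the integral, and on that smaller contour $g_j=f_j(y\cdot)$ is analytic. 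With that adjustment your argument closes the gap the paper leaves implicit.
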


\begin{remark}
    In \cite{bao2015spectral}, they derived the asymptotic distribution of $\tr(\brho_n^k)$ for for any positive integer $k \geq 2$. Technically, they utilized Anderson and Zeitouni's cumulant method \citep{anderson2008clt} and  proposed a two-step comparison approach to obtain the
    explicit mean and covariance of CLTs. Here we adopt the classical proof technique from the seminal work of \cite{bai2004clt}. Taking $f(x)=x^k$, we will revisit the results of \cite{bao2015spectral} in \eqref{form:tr_1}.
\end{remark}

\begin{remark}
   Note that $\brho_n$ is a correlation matrix which means $\tr(\brho_n)=p$. Thus, $f(x)=x$ is a degenerate case.  Taking $f(x)=x$, we can show that the asymptotic mean and the variance are both zero.  
\end{remark}

\begin{remark}
   For sample covariance matrices, using the sample mean or the true population mean has impacts on the final CLT and \cite{pan2014comparison} compared the two types of sample covariance matrices. More specifically,  \cite{zheng2015substitution} proposed a substitution principle which adjusted the sample size from $n$ to $n-1$ for the sample covariance matrix based on the sample mean. For Spearman's rank correlation matrices, although the sample mean of the rank statistics is constant, e.g., 
   \begin{align*}
    \frac{1}{n} \sum_{i=1}^n \r_i=\frac{n+1}{2} \one_p,
   \end{align*}
 it still contributes to the CLT. In details, the population covariance matrix $\bSig \in \mR^{n \times n}$ related the Gram matrix $\g_n$ has one zero eigenvalue. Following \cite{zheng2015substitution}, we can also use $n-1$ and consider the ratio $p/(n-1)$. Then, the additional term such as $\mu_3(z)$ can be removed and more details can be found in Remark \ref{remark1}.     
\end{remark}

\subsection{Improved Spearman's correlation matrix}
The Spearman's rank correlation is a classical method in non-parametric statistics. 
We note that the rank statistics can be transformed into the sum of indicators, which implies 
\begin{align*}
     r_{ij}-\frac{n+1}{2}=\frac{1}{2}\sum_{k\not=i}\sign(X_{ij}-X_{kj}).
\end{align*}
Here $\sign(\cdot)$ is the sign function. Denoting the sign vector 
\begin{align*}
    \A_{ij}=\sign(\X_i-\X_j) \defby \begin{pmatrix}
        \sign(X_{i1}-X_{j1})\\
        \vdots\\
        \sign(X_{ip}-X_{jp})
    \end{pmatrix},  
\end{align*}
we have
\begin{align*}
    \brho_n=\frac{3}{n(n^2-1)}\sum_{i=1}^{n}\sum_{j,k\not=i}\A_{ij}\A_{ik}\trans.
\end{align*}
In the form of non-parametric U-statistics, it can be decomposed into two U-statistics,
\begin{align*}
    \brho_n=\frac{3}{n(n^2-1)}\sum_{i,j}^*\A_{ij}\A_{ij}\trans+\frac{3}{n(n^2-1)}\sum_{i,j,k}^*\A_{ij}\A_{ik}\trans=\frac{3}{n+1}\K_n+\frac{n-2}{n+1}\tbrho_n,
\end{align*}
where 
\begin{align*}
    \K_n=\frac{1}{n(n-1)} \sum_{i,j}^*\A_{ij}\A_{ij}\trans
\end{align*}
is Kendall's rank correlation matrix and
\begin{align*}
    \tbrho_n=\frac{3}{n(n-1)(n-2)}\sum_{i,j,k}^*\A_{ij}\A_{ik}\trans
\end{align*}
is the improved Spearman's rank correlation matrix proposed by \cite{Hoeffding1948}. Here $\sum^*$ denotes summation over mutually different indices and more details can be found in \cite{wu2022limiting}.

As can be seen, the Kendall's correlation matrix $\K_n$ is a U-statistic of order 2 and \cite{li2021central} studied the asymptotic distribution of its linear spectral statistics. The improved Spearman's rank correlation matrix $\tbrho_n$ is a U-statistic of order $3$, that is difficult to analyze it directly. Based on the CLT of $\brho_n$, we can study the difference between $\tbrho_n$ and $\brho_n$ which is $3(\K_n-\tbrho_n)/(n+1)$. For LSD \citep{wu2022limiting}, this difference can be ignored. However, for the CLT of LSS, this deviation does contribute a non-trivial term to the asymptotic distribution. Considering the centered statistic 
\begin{align*}
    \widetilde{T}(f)\defby p\left(\int f(x)dF^{\tbrho_n}(x)-\int f(x)dF_{y_n}(x)\right),
\end{align*}
we present the main result in the following theorem.

\begin{thm}\label{thm:LSS_clt_3}
    Under the conditions of Theorem \ref{thm:LSS_clt_2}, as $y_n\to y$, the random vector $(\widetilde{T}(f_1),\cdots,\widetilde{T}(f_k))$ converges weakly to a Gaussian vector $(\widetilde{Z}_{f_1},\cdots,\widetilde{Z}_{f_k})$ with mean function 
    \begin{align}
        \E \widetilde{Z}_{f}=-\frac{1}{2\pi i}\oint_{\mathcal{C}} f(yz)\left(\mu(z)+\widetilde{\mu}(z)\right)dz,\label{form:mean_function_3}
    \end{align}
    and covariance function 
        \begin{align}
        \cov(\widetilde{Z}_{f},\widetilde{Z}_{g})=-\frac{1}{4\pi^2}\oint_{\mathcal{C}_1} \oint_{\mathcal{C}_2} f(yz_1)g(yz_2)\sigma(z_1,z_2)dz_1dz_2,\label{form:cov_function_3}
    \end{align}
    where $\mu(z)$, $\sigma(z_1,z_2)$ are defined in Theorem \ref{thm:LSS_clt}, and 
    \begin{align*}
        \widetilde{\mu}(z)=\frac{\underline{s}^3(z)\left(2+\underline{s}(z)\right)}{\left(1+\underline{s}(z)\right)^2-\underline{s}^2(z)/y}.
    \end{align*}
\end{thm}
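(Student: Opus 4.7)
The strategy is to realize $\widetilde{T}(f)$ as a perturbation of $T(f)$, which is already controlled by Theorem \ref{thm:LSS_clt_2}. From the U-statistic decomposition recalled just above the theorem, $\brho_n = \frac{3}{n+1}\K_n + \frac{n-2}{n+1}\tbrho_n$, so $\tbrho_n - \brho_n = \frac{3}{n+1}(\tbrho_n - \K_n)$; since both $\tbrho_n$ and $\K_n$ have bounded spectra with high probability, this perturbation has operator norm $O_p(n^{-1})$. Writing $\widetilde{T}(f) = T(f) + \Delta_n(f)$ with $\Delta_n(f) \defby p\int f\,(dF^{\tbrho_n} - dF^{\brho_n})$, the plan is to show that $\Delta_n(f) \topr -\frac{1}{2\pi i}\oint f(yz)\,\widetilde{\mu}(z)\,dz$. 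Once this deterministic limit is in hand, Slutsky's theorem, applied jointly over $f_1,\ldots,f_k$, combines with Theorem \ref{thm:LSS_clt_2} to yield a Gaussian limit with covariance unchanged and mean shifted by exactly the stated amount.

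By the Cauchy integral formula, $\Delta_n(f)$ becomes a contour integral of $\Delta_n(z) \defby p\bigl[m_{F^{\tbrho_n}}(z) - m_{F^{\brho_n}}(z)\bigr]$. The resolvent identity together with the smallness of $\tbrho_n - \brho_n$ (which lets one swap $(\tbrho_n - z\bI_p)^{-1}$ for $(\brho_n - z\bI_p)^{-1}$ up to lower-order terms) gives
\begin{align*}
\Delta_n(z)
&= -\tr\bigl[(\tbrho_n - z\bI_p)^{-1}(\tbrho_n - \brho_n)(\brho_n - z\bI_p)^{-1}\bigr] \\
&= -\frac{3}{n+1}\tr\bigl[(\tbrho_n - \K_n)(\brho_n - z\bI_p)^{-2}\bigr] + o_p(1),
\end{align*}
so the task reduces to identifying the deterministic limit of $\tfrac{1}{n}\tr\bigl[(\tbrho_n - \K_n)(\brho_n - z\bI_p)^{-2}\bigr]$. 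I would expand $\tbrho_n - \K_n$ through the sign-vector representation $\A_{ij} = \sign(\X_i - \X_j)$, separating the distinct-triple and distinct-pair sums, and pair the result against the resolvent of $\brho_n$ written in terms of the standardized rank columns $\s_1,\ldots,\s_p$. Woodbury-type manipulations and the trace lemmas behind Theorem \ref{thm:LSS_clt} should then express the leading term as an explicit function of $\underline{s}(z)$ and $\underline{s}'(z)$, while concentration of the trace around its deterministic mean would follow from a martingale decomposition on samples, in the same spirit as Lemmas \ref{lem:second_moment} and \ref{lem:higher_moments_control}.

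The main obstacle is this identification step. Unlike the $\bSig$-versus-$\bI_n$ correction that produces $\mu_3(z)$ in Remark \ref{remark1}, the difference $\tbrho_n - \K_n$ compares two U-statistics of different orders (three versus two), so extracting its leading effect on the resolvent demands careful bookkeeping of sign correlations across distinct triples as opposed to distinct pairs. The factorization $\widetilde{\mu}(z) = (2+\underline{s}(z))\,\mu_3(z)$, in which the two functions share a common denominator, is suggestive: the combinatorial structure should parallel the $\bSig$-versus-$\bI_n$ correction, while the extra factor $2+\underline{s}(z)$ is consistent with one additional index being contracted in passing from the order-$2$ to the order-$3$ statistic. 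After this limit is identified and the change of variable $z \mapsto y z$ relating the supports of $\g_n$ and $\brho_n$ is performed, the mean-shift formula $-\tfrac{1}{2\pi i}\oint f(yz)\widetilde{\mu}(z)\,dz$ emerges; combined with Theorem \ref{thm:LSS_clt_2} via Slutsky, this completes the proof.
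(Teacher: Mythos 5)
Your overall framing matches the paper: write $\widetilde{T}(f)=T(f)+\Delta_n(f)$, show $\Delta_n(f)$ converges in probability to the deterministic contour integral of $\widetilde{\mu}$, and invoke Slutsky. The reduction of $\Delta_n(z)=p\bigl[m_{F^{\tbrho_n}}(z)-m_{F^{\brho_n}}(z)\bigr]$ via the resolvent identity to a trace involving $\tbrho_n-\K_n$ and squared resolvents is also correct, and matches the paper's Step 1 of Lemma \ref{lem:clt_Ln(z)} up to the (inessential) choice of which resolvent to keep. You even notice the factorization $\widetilde{\mu}(z)=(2+\underline{s}(z))\,\mu_3(z)$.

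However, you have not actually closed the argument, and the route you sketch for the identification step is unlikely to work as stated. You propose to expand $\tbrho_n-\K_n$ directly in the sign vectors $\A_{ij}$, ``separating the distinct-triple and distinct-pair sums,'' and then combine with the trace lemmas behind Theorem \ref{thm:LSS_clt}. The obstruction is that those lemmas (Lemmas \ref{lem:second_moment} and \ref{lem:higher_moments_control}) are built around the i.i.d.\ column vectors $\s_1,\ldots,\s_p$ of the Gram model, whereas $\tbrho_n$ and $\K_n$ are U-statistics in the $n$ samples whose kernels $\A_{ij}$ are strongly dependent across overlapping index pairs/triples; there is no clean way to pair a raw distinct-triple expansion against the resolvent. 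The missing ingredient in the paper is the Hoeffding decomposition $\A_{ij}=\A_i+\A_j+\bvare_{ij}$, which lets one replace $\tbrho_n$ by $\U_n=\frac{3}{n}\sum_i\A_i\A_i\trans$ and $\K_n$ by $\V_n=\frac{2}{n}\sum_i\A_i\A_i\trans+\frac{1}{3}\bI_p$ up to Frobenius-norm error $o(\sqrt{p})$ (Lemma \ref{lem:approximation}). This converts both U-statistics into sample-covariance-type matrices built from the \emph{independent} vectors $\A_1,\ldots,\A_n$, after which leave-one-out on $\U_n$ and the known LSD $m(z)$ of $\U_n$ give the limits of $\tr\bigl[(\U_n-z\bI)^{-1}\V_n\bigr]$ and $\tr\bigl[(\U_n-z\bI)^{-1}\U_n\bigr]$, hence of $L_n(z)$. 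You flag ``the main obstacle is this identification step'' yourself, so you are aware of the gap; the point is that without the Hoeffding projection (or an equivalent linearization of the U-statistics), the direct combinatorial attack stalls exactly there. You would also need some uniform-in-$z$ control (the paper proves tightness of $\widehat L_n$ via Weyl's inequality and the edge rigidity of $\K_n$) to pass from pointwise convergence of $\Delta_n(z)$ to convergence of the contour integral.
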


Compared with Theorem \ref{thm:LSS_clt_2} for $\brho_n$, the asymptotic variance is the same and there is a new additional term to the asymptotic mean which is due to the difference $\tbrho_n-\brho_n$. 

\section{Application}
For sample correlation matrices, one important application is to test mutual independence among features. More specifically, we consider the hypotheses testing problem
\begin{align*}
    H_0:  \R=\bI,
\end{align*}
where $\R$ is a population correlation matrix such as the classical Pearson's correlation matirx, Spearman's correlation matrix, Kendall's correlation matrix and so on. 

To evaluate $\R=\bI$, we have the following equivalent definitions
\begin{align*}
   \ell_2~\mbox{loss:} \quad & \|\R-\bI\|_2^2=\tr(\R^2)-2\tr(\R)+p=0;\\
   \mbox{Stein's loss:} \quad &  \tr(\R)-\log(\R)-p=0;\\
   \ell_\infty~\mbox{loss:} \quad & \|\R-\bI\|_\infty=0.
\end{align*}
Based on an estimator $\widehat{\R}$, intuitively one can conduct test statistics
\begin{align*}
T_1(\widehat{\R})=&\tr(\widehat{\R}^2)-2\tr(\widehat{\R})+p,\\
T_2(\widehat{\R})=&\tr(\widehat{\R})-\log(\widehat{\R})-p,\\
T_3(\widehat{\R})=&\|\widehat{\R}-\bI\|_\infty.
\end{align*}
For large dimensional data, it is challenging to derive the asymptotic distribution of these statistics. In the past 20 years, significant progress has been made in this field and many important methods were proposed in literature. In special, $\ell_2$ loss and Stein's loss can be expressed by linear spectral statistics and in RMT, they motivate the study on the LSS of these correlation matrices. We summarize the developments of testing correlation matrices in Table \ref{table1}.  
\begin{table}[ht!]
	\centering
	\caption{Developments of testing correlation matrices in RMT}
	\label{table1}
	\resizebox{1\textwidth}{!}{%
	\begin{tabular}{|c|ccc|}
\hline
& \mbox{Sample~correlation}&\mbox{Kendall's $\tau$}&	\mbox{Spearman's $\rho$}\\
\hline
\mbox{$\ell_2$~loss} & \cite{gao2017high, zheng2019test}& \cite{li2021central}&\cite{bao2015spectral}\\
\mbox{Stein's loss} &\cite{gao2017high}&\cite{li2021central} & \\
\mbox{$\ell_\infty$~loss:}&\cite{zhou2007asymptotic}&\cite{han2017distribution}& \cite{han2017distribution}\\
\hline
\end{tabular}}
\end{table}

In special, \cite{chen2010tests} proposed a U-statistic's trick to estimate the $\ell_2$~loss directly which can obtain a better convergence rate. Using this trick, several statistics based on $T_1(\widehat{\R})$ can be extended to high-dimensional data case where $p \gg n$. See \cite{leung2018testing} for more details.

As application of Theorem \ref{thm:LSS_clt_2} for Spearman's correlation matrix $\brho_n$, we can fill the gap for Stein's loss, i.e., obtaining the asymptotic distribution of $T_2(\brho_n)$. Similarly, based on the improved Spearman's correlation matrix $\tbrho_n$, we can also conduct three test statistics. \cite{han2017distribution} has studied $T_3(\tbrho_n)$ and here we can derive the distributions of $T_1(\tbrho_n)$ and $T_2(\tbrho_n)$.    

Taking $f(x)$ being logarithm $\log(x)$ or polynomial $x^k$ for $k \geq 2$,  by direct calculations of Theorem \ref{thm:LSS_clt_2} and Theorem \ref{thm:LSS_clt_3}, we obtain the following asymptotic distributions.
\begin{thm}\label{thm:calculation}
    Under the conditions of Theorem \ref{thm:LSS_clt_2}, we have
    \begin{align}
        &\log|\brho_n|+(n-p)\log(1-y_n)+p\tod N(\mu_{\log},\sigma_{\log}^2),\label{form:log_1}\\
        &\log|\tbrho_n|+(n-p)\log(1-y_n)+p\tod N(\widetilde{\mu}_{\log},\sigma_{\log}^2),\label{form:log_2}\\
        &\tr(\brho_n^k)-\sum_{j=0}^{k-1}\frac{py_n^j}{(j+1)}\binom{k}{j}\binom{k-1}{j}\tod N(\mu_{x^k},\sigma_{x^k}^2),\label{form:tr_1}\\
        &\tr(\tbrho_n^k)-\sum_{j=0}^{k-1}\frac{py_n^j}{(j+1)}\binom{k}{j}\binom{k-1}{j}\tod N(\widetilde{\mu}_{x^k},\sigma_{x^k}^2),\label{form:tr_2}
    \end{align}
    where the asymptotic means are
    \begin{align*}
        \mu_{\log}=&\frac{3}{2}\log(1-y)+2y,\\
        \widetilde{\mu}_{\log}=&\mu_{\log}-\frac{y^2}{1-y},\\
        \mu_{x^k}=&\frac{1}{4}\left[(1-\sqrt{y})^{2k}+(1+\sqrt{y})^{2k}\right]-\frac{1}{2}\sum_{j=0}^k\binom{k}{j}^2y^{k-j}-\frac{2}{y}\sum_{j=0}^k\binom{k}{j}(y-1)^j\binom{2k-j}{k-2}\\
    &+\sum_{j=0}^k\binom{k}{j}(y-1)^j\binom{2k-j-1}{k-2},\\
    \widetilde{\mu}_{x^k}=&\mu_{x^k}-\sum_{j=0}^{k-1}\binom{k}{j}(y-1)^j\binom{2k-j-2}{k-1}+\sum_{j=0}^k\binom{k}{j}(y-1)^j\binom{2k-j}{k-1},
    \end{align*}
and the asymptotic variances are
\begin{align*}
    \sigma_{\log}^2=&-2\log(1-y)-2y,\\
        \sigma_{x^k}^2=&2\sum_{j_1=0}^{k-1}\sum_{j_2=0}^{k}\binom{k}{j_1}\binom{k}{j_2}\left(y-1\right)^{j_1+j_2}\sum_{l=1}^{k-j_1}l\binom{2k-1-(j_1+l)}{k-1}\binom{2k-1-j_2+l}{k-1}\\
    &-\frac{2}{y}\sum_{j_1=0}^{k}\sum_{j_2=0}^{k}\binom{k}{j_1}\binom{k}{j_2}\left(y-1\right)^{j_1+j_2}\binom{2k-j_1}{k-1}\binom{2k-j_2}{k-1}.
    \end{align*}
\end{thm}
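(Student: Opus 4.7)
The plan is to specialize Theorems \ref{thm:LSS_clt_2} and \ref{thm:LSS_clt_3} to $f(x)=\log x$ and $f(x)=x^k$ and to evaluate the resulting contour integrals by the Mar\v{c}enko--Pastur change of variable
\[
z=z(s)\defby -\frac{1}{s}+\frac{y_0}{1+s},\qquad y_0=1/y,
\]
under which $\underline{s}(z(s))=s$, $dz=z'(s)\,ds$ with $z'(s)=\tfrac{1}{s^2}-\tfrac{y_0}{(1+s)^2}$, and the denominator $(1+\underline{s})^2-y_0\underline{s}^2$ appearing throughout $\mu$ and $\sigma$ becomes an elementary rational function of $s$. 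Since $\underline{s}(z)\sim -1/z$ as $|z|\to\infty$, a positively oriented contour around the support \eqref{form:support_2} maps to a small negatively oriented contour around $s=0$, so each integral reduces to a finite sum of residues at the enclosed poles, typically $s=0$ and $s=-1$.

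Before invoking the CLTs I would identify the deterministic centering. The Mar\v{c}enko--Pastur moments are the Narayana-type numbers $\int x^k\,dF_y(x)=\sum_{j=0}^{k-1}\tfrac{y^j}{j+1}\binom{k}{j}\binom{k-1}{j}$, so multiplication by $p$ produces exactly the subtracted term in \eqref{form:tr_1}--\eqref{form:tr_2}. The log counterpart $\int\log x\,dF_y(x)=-1+\tfrac{y-1}{y}\log(1-y)$ yields, after multiplication by $p$, the centering $-p+(p-n)\log(1-y_n)$ of \eqref{form:log_1}--\eqref{form:log_2}. Hence $\log|\brho_n|+(n-p)\log(1-y_n)+p$ and $\tr(\brho_n^k)-\sum_j\tfrac{p y_n^j}{j+1}\binom{k}{j}\binom{k-1}{j}$ are precisely the centered LSS $T(\log)$ and $T(x^k)$ to which Theorem \ref{thm:LSS_clt_2} applies, and similarly for $\tbrho_n$ via Theorem \ref{thm:LSS_clt_3}.

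For the mean, substituting $s=\underline{s}(z)$ in \eqref{form:mean_function_2} reduces $-\tfrac{1}{2\pi i}\oint f(yz)\mu(z)\,dz$ to a residue computation. In the log case, writing $f(yz(s))=\log y+\log z(s)$ and splitting $\mu=\mu_1+\mu_2+\mu_3$ as in Remark \ref{remark1}, each piece is rational in $s$ and its residue at $s=0$ can be read off directly; the sum recovers $\mu_{\log}=\tfrac{3}{2}\log(1-y)+2y$. For $f(x)=x^k$, expanding $(yz(s))^k$ by the binomial theorem applied to $yz(s)=-y/s+1/(1+s)$ and multiplying by $\mu(z(s))z'(s)$ produces double binomial sums whose residues at $s=0$ and $s=-1$ assemble into $\mu_{x^k}$. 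The extra kernel $\widetilde\mu$ in Theorem \ref{thm:LSS_clt_3} supplies the correction $-y^2/(1-y)$ in the log case and the analogous binomial shift in the polynomial case. For the covariance, the same two-variable substitution makes $\sigma_1$ and $\sigma_3$ rational in $(s_1,s_2)$, while the middle term uses the identity
\[
z(s_1)-z(s_2)=(s_1-s_2)\Bigl[\tfrac{1}{s_1 s_2}-\tfrac{y_0}{(1+s_1)(1+s_2)}\Bigr]
\]
to become rational as well; the resulting double residue yields $\sigma^2_{\log}$ and $\sigma^2_{x^k}$. Since $\widetilde\mu$ does not enter the covariance kernel, the variances for $\brho_n$ and $\tbrho_n$ coincide.

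The main obstacle is the combinatorial bookkeeping in the polynomial case: multiplying $(yz(s))^k$ by the rational kernels produces multinomial expressions whose residues must be matched to the binomial sums in $\mu_{x^k}$, $\widetilde\mu_{x^k}$, and $\sigma^2_{x^k}$. The precise upper summation limits come from vanishing of residues once an exponent exceeds $k$, and the cancellations between the $s=0$ and $s=-1$ contributions have to be tracked carefully to recognize the coefficients $\binom{2k-j}{k-1}$, $\binom{2k-j-1}{k-2}$, and so on. The log case is cleaner algebraically but still requires correctly assembling the Bai--Silverstein term from $\mu_1$, the Pan-type contribution from $\mu_2$, and the $\bSig$-correction from $\mu_3$ identified in Remark \ref{remark1} to reach the $\tfrac{3}{2}\log(1-y)+2y$ coefficient.
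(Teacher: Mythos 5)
Your overall route is the same as the paper's: the centerings are exactly the Mar\v{c}enko--Pastur moments (Narayana numbers for $x^k$, and $\int\log x\,dF_{y_n}(x)=\frac{y_n-1}{y_n}\log(1-y_n)-1$ for the log case, as in Bai--Silverstein (2009)), and the means and covariances are obtained from Theorems \ref{thm:LSS_clt_2}--\ref{thm:LSS_clt_3} by the substitution $z=z(\underline{s})=-1/\underline{s}+y_0/(1+\underline{s})$ and contour evaluation; the paper carries this out explicitly for $f=\log$ and defers the $x^k$ case to the analogous computation in Pan and Zhou (2008), Theorem 1.4.

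There is, however, a concrete flaw in your residue bookkeeping that would produce wrong constants if executed as written. First, the singularities picked up are not ``$s=0$ and $s=-1$'': $s=-1$ lies outside the image of the contour, whereas the points that do matter are $s=0$, the image $s=\underline{s}(0)=y/(1-y)$ of the origin (the contour in \eqref{form:support} must enclose $z=0$ when $y<1$ because of the Gram matrix's $n-p$ zero eigenvalues), and, for the $\mu_1$- and $\sigma$-pieces, the double zeros of $(1+\underline{s})^2-y_0\underline{s}^2$ at the edge images. Second, in the log case the integrand is not rational in $s$: $\log\bigl(yz(s)\bigr)$ has its branch point at $z=0$ inside the contour, so it is not even single-valued along the image curve and its residue at $s=0$ cannot simply be ``read off''. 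The paper avoids this by integrating by parts so that only the meromorphic factor $z'(\underline{s})/z(\underline{s})$ appears --- its pole at $\underline{s}=y/(1-y)$ is precisely what generates the $\log(1-y)$-type contributions in $\mu_{\log}$, $\widetilde{\mu}_{\log}$ and $\sigma_{\log}^2$ --- and it evaluates the Bai--Silverstein term $\mu_1$ through the real integral $\frac{1}{2\pi}\int x^{-1}\arg\bigl(1-y_0\underline{s}^2(x)/(1+\underline{s}(x))^2\bigr)dx$ rather than by residues at the edge points. To make your plan work you must either adopt these devices (or equivalently use the second, unrescaled form of \eqref{form:mean_function_2} with a contour around the support of $F_y$, away from the origin, and track the monodromy of the logarithm), and you must verify the orientation and the exact pole set of the mapped contour before summing residues; with that correction the remaining binomial bookkeeping for $x^k$ proceeds as you describe.
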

For polynomials of $\brho_n$,  our results \eqref{form:tr_1} are consistent with ones of \cite{bao2015spectral} and the other three asymptotic distributions are new which can be used to derive the asymptotic distribution of test statistics.

Noting $\tr(\brho_n)=p$ and $\tr(\tbrho_n)=p$, we can simplify the test statistics of $T_1(\brho_n),~T_1(\tbrho_n),~T_2(\brho_n),~T_2(\tbrho_n)$ and consider
\begin{align*}
    L_{\brho,2}=\tr(\brho_n^2),\quad L_{\brho,\log}=\log(|\brho_n|),\quad L_{\tbrho,2}=\tr(\tbrho_n^2),\quad L_{\tbrho,\log}=\log(|\tbrho_n|).
\end{align*}
With Theorem \ref{thm:calculation}, we can get four rejection regions for testing the null distribution
\begin{align*}
    R_1 =& \{L_{\brho,2}-\frac{p^2}{n}-p>y_n^2-y_n+2y_nZ_{\alpha}\},\\
    R_2 =& \{L_{\brho,\log}+(n-p)\log(1-y_n)+p<\frac{3}{2}\log(1-y_n)+2y_n\\
    &-\sqrt{-2\log(1-y_n)-2y_n}Z_{\alpha}\},\\
    R_3 =& \{L_{\tbrho,2}-\frac{p^2}{n}-p>3y_n^2-y_n+2y_nZ_{\alpha}\},\\
    R_4 =& \{L_{\tbrho,\log}+(n-p)\log(1-y_n)+p<\frac{3}{2}\log(1-y_n)+\frac{2y_n-3y_n^2}{1-y_n}\\
    &-\sqrt{-2\log(1-y_n)-2y_n}Z_{\alpha}\},
\end{align*}
where $Z_{\alpha}$ is the upper-$\alpha$ quantile of $N(0,1)$.


To examine the finite sample performance of these test statistics, we conduct the following null hypotheses with data $\X_n=(X_{ij})_{n\times p}$ generated from different models. Specifically, we consider three types of null distributions:
\begin{itemize}
    \item Normal distribution: $X_{ij}$ are i.i.d. $N(0,1)$ for $1\leq i\leq n$ and $1\leq j\leq p$.
    \item Cauchy distribution: $X_{ij}$ are i.i.d. Cauchy distribution with location $0$ and scale $1$ (Cauchy$(0,1)$) for $1\leq i\leq n$ and $1\leq j\leq p$.
    \item Mixed distribution: $X_{ij_1}$ are i.i.d. Cauchy$(0,1)$ for $1\leq i\leq n$, $1\leq j_1\leq \lfloor p/4\rfloor$; $X_{ij_2}$ are i.i.d. $N(0,1)$ for $1\leq i\leq n$, $\lfloor p/4\rfloor+1\leq j_2\leq \lfloor p/2\rfloor$; $X_{ij_3}$ are i.i.d. $\chi^2(2)$ for $1\leq i\leq n$, $\lfloor p/2\rfloor+1\leq j_2\leq p$.
\end{itemize}
It is noted that Cauchy$(0,1)$ is a well known heavy-tailed distribution without expectation, and the mixed distribution is from \cite{li2021central}. 

As for comparison, we consider other 8 test statistics based on Spearman, Kendall and Pearson's correlation matrices:
\begin{enumerate}
    \item $L_{\brho,\max}$: maximum test based on Spearman's correlations \citep{han2017distribution};
    \item $L_{\tbrho,\max}$: maximum test based on improved Spearman's correlations \citep{han2017distribution};
    \item $L_{\K,2}$: $\ell_2$ test based on Kendall's correlations \citep{li2021central};
    \item $L_{\K,\log}$: Stein's test based on Kendall's correlations \citep{li2021central};
    \item $L_{\K,\max}$: maximum test based on Kendall's correlations \citep{han2017distribution};
    \item $L_{\R,2}$: $\ell_2$ test based on Pearson's correlations \citep{han2017distribution};
    \item $L_{\R,\log}$: Stein's test based on Pearson's correlations \citep{han2017distribution};
    \item $L_{\R,\max}$: maximum test based on Pearson's correlations \citep{zhou2007asymptotic}.
\end{enumerate}

We conduct numerical experiments to evaluate the performance of our proposed test statistics. We consider various combinations of sample size $n$, dimension $p$, and underlying distributions, and compare our methods with existing approaches. Table \ref{table:emp_size} presents the empirical sizes of the tests at a nominal significance level of 5\% based on 1000 replications. Our results demonstrate that Pearson's correlation-based tests are sensitive to distributional assumptions and may not perform well under heavy-tailed distributions. In contrast, rank-based test statistics, including our proposed $L_{\brho,\log}$, $L_{\tbrho,2}$, and $L_{\tbrho,\log}$, exhibit robust performance across different distributions. The empirical sizes of our proposed tests are close to the nominal 5\% level, confirming the validity of our theoretical results.

\begin{table}[htbp!]
    \centering  
    \caption{Empirical sizes of independence test statistics based on Pearson, Spearman and Kendall's correlations.}
    \label{table:emp_size}
    \resizebox{\textwidth}{!}{
  \begin{tabular}{@{\extracolsep{5pt}} cccccccccc} 
  \\[-1.8ex]\hline 
  \hline \\[-1.8ex]
  n & 100 & 200 & 400 & 100 & 200 & 400 & 100 & 200 & 400 \\
  \hline \\[-1.8ex] 
  p & 50 & 100 & 200 & 70 & 140 & 280 & 200 & 400 & 800 \\
  \hline \\[-1.8ex]
  y & 0.5 & 0.5 & 0.5 & 0.7 & 0.7 & 0.7 & 2 & 2 & 2 \\
  \hline \\[-1.8ex] 
  \multicolumn{10}{c}{Normal distribution}\\
  $L_{\brho,2}$ & 0.046 & 0.05 & 0.046 & 0.044 & 0.041 & 0.045 & 0.051 & 0.059 & 0.058 \\ 
  $L_{\brho,\log}$ & 0.037 & 0.049 & 0.041 & 0.052 & 0.043 & 0.048 & - & - & - \\ 
  $L_{\brho,\max}$ & 0.023 & 0.028 & 0.037 & 0.025 & 0.033 & 0.041 & 0.022 & 0.023 & 0.049 \\ 
  $L_{\tbrho,2}$ & 0.047 & 0.052 & 0.047 & 0.047 & 0.042 & 0.046 & 0.063 & 0.062 & 0.059 \\ 
  $L_{\tbrho,\log}$ & 0.102 & 0.075 & 0.05 & 0.069 & 0.049 & 0.052 & - & - & - \\ 
  $L_{\tbrho,\max}$ & 0.023 & 0.027 & 0.037 & 0.025 & 0.033 & 0.042 & 0.022 & 0.023 & 0.049 \\ 
  $L_{\K,2}$ & 0.043 & 0.05 & 0.047 & 0.046 & 0.045 & 0.047 & 0.059 & 0.067 & 0.06 \\ 
  $L_{\K,\log}$ & 0.047 & 0.054 & 0.045 & 0.055 & 0.045 & 0.046 & 0.119 & 0.09 & 0.064 \\ 
  $L_{\K,\max}$ & 0.039 & 0.039 & 0.048 & 0.035 & 0.042 & 0.048 & 0.034 & 0.029 & 0.058 \\ 
  $L_{\R,2}$ & 0.083 & 0.109 & 0.086 & 0.074 & 0.076 & 0.083 & 0.28 & 0.292 & 0.252 \\ 
  $L_{\R,\log}$ & 0.04 & 0.045 & 0.044 & 0.04 & 0.05 & 0.056 & - & - & - \\ 
  $L_{\R,\max}$ & 0.027 & 0.023 & 0.04 & 0.027 & 0.028 & 0.037 & 0.012 & 0.029 & 0.039 \\ 
  \hline \\[-1.8ex] 
  \multicolumn{10}{c}{Cauchy distribution}\\
  $L_{\brho,2}$ & 0.056 & 0.055 & 0.039 & 0.042 & 0.058 & 0.041 & 0.048 & 0.051 & 0.044 \\ 
  $L_{\brho,\log}$ & 0.065 & 0.05 & 0.055 & 0.056 & 0.052 & 0.045 & - & - & - \\ 
  $L_{\brho,\max}$ & 0.023 & 0.029 & 0.044 & 0.03 & 0.031 & 0.039 & 0.016 & 0.02 & 0.038 \\ 
  $L_{\tbrho,2}$ & 0.058 & 0.058 & 0.04 & 0.047 & 0.058 & 0.041 & 0.057 & 0.057 & 0.05 \\ 
  $L_{\tbrho,\log}$ & 0.121 & 0.071 & 0.064 & 0.072 & 0.061 & 0.049 & - & - & - \\ 
  $L_{\tbrho,\max}$ & 0.023 & 0.027 & 0.044 & 0.03 & 0.031 & 0.039 & 0.016 & 0.02 & 0.038 \\ 
  $L_{\K,2}$ & 0.06 & 0.058 & 0.041 & 0.043 & 0.06 & 0.045 & 0.056 & 0.058 & 0.052 \\ 
  $L_{\K,\log}$ & 0.062 & 0.063 & 0.053 & 0.057 & 0.051 & 0.05 & 0.125 & 0.074 & 0.072 \\ 
  $L_{\K,\max}$ & 0.031 & 0.041 & 0.051 & 0.037 & 0.037 & 0.042 & 0.029 & 0.032 & 0.038 \\ 
  $L_{\R,2}$ & 1 & 1 & 1 & 1 & 1 & 1 & 1 & 1 & 1 \\ 
  $L_{\R,\log}$ & 0.912 & 0.983 & 1 & 0.765 & 0.917 & 0.987 & - & - & - \\ 
  $L_{\R,\max}$ & 1 & 1 & 1 & 1 & 1 & 1 & 1 & 1 & 1 \\ 
\hline \\[-1.8ex] 
  \multicolumn{10}{c}{Mixed distribution}\\
  $L_{\brho,2}$ & 0.053 & 0.061 & 0.051 & 0.038 & 0.054 & 0.049 & 0.043 & 0.041 & 0.044 \\ 
  $L_{\brho,\log}$ & 0.057 & 0.052 & 0.055 & 0.056 & 0.053 & 0.05 & - & - & - \\ 
  $L_{\brho,\max}$ & 0.02 & 0.033 & 0.036 & 0.021 & 0.034 & 0.031 & 0.017 & 0.028 & 0.032 \\ 
  $L_{\tbrho,2}$ & 0.056 & 0.063 & 0.052 & 0.042 & 0.054 & 0.049 & 0.049 & 0.043 & 0.045 \\ 
  $L_{\tbrho,\log}$ & 0.109 & 0.068 & 0.065 & 0.077 & 0.063 & 0.056 & - & - & - \\ 
  $L_{\tbrho,\max}$ & 0.02 & 0.033 & 0.036 & 0.021 & 0.034 & 0.031 & 0.017 & 0.028 & 0.032 \\ 
  $L_{\K,2}$ & 0.06 & 0.064 & 0.054 & 0.047 & 0.061 & 0.048 & 0.049 & 0.042 & 0.044 \\ 
  $L_{\K,\log}$ & 0.065 & 0.062 & 0.059 & 0.052 & 0.065 & 0.049 & 0.115 & 0.066 & 0.061 \\ 
  $L_{\K,\max}$ & 0.036 & 0.047 & 0.043 & 0.03 & 0.043 & 0.036 & 0.033 & 0.044 & 0.04 \\ 
  $L_{\R,2}$ & 1 & 1 & 1 & 1 & 1 & 1 & 1 & 1 & 1 \\ 
  $L_{\R,\log}$ & 1 & 1 & 1 & 1 & 1 & 1 & - & - & - \\ 
  $L_{\R,\max}$ & 0.91 & 1 & 1 & 0.712 & 0.988 & 1 & 1 & 1 & 1 \\ 
  \hline \\[-1.8ex]
  \end{tabular} }
  \end{table}

To evaluate the power of our proposed test statistics, we generate data under various alternative hypotheses. We start with data generated from the above three null distributions and then generate the correlated data $\X_n$ as follows:
\begin{itemize}
    \item Global correlation: $\X_n=\Z_n\bSig$, where $\bSig=\left(\sigma_{ij}\right)_{p\times p}$ is the Toeplitz matrix with $\sigma_{ii}=1$, $\sigma_{i-1,i}=\sigma_{i,i+1}=\rho$, $\sigma_{ij}=0$ for $|i-j|>1$;
    \item Local correlation: $X_{i1}=Z_{i1}+\rho Z_{i2}$, $X_{i2}=\rho Z_{i1}+Z_{i2}$ and $X_{ij}=Z_{ij}$ for $1\leq i\leq n$, $3\leq j\leq p$.
\end{itemize}
By combining these data generation methods, we obtain six different alternative hypotheses. For each alternative hypothesis, we fix the sample size ($n=200$) and dimension ($p=100$), and vary the correlation parameters $\rho$ to assess the power of the tests. Since Pearson's correlation-based tests are sensitive to distributional assumptions, we focus on rank-based tests (Spearman and Kendall) in our power analysis. Simulation results are presented in Table \ref{table:emp_power_1} and Table \ref{table:emp_power_2}.

\begin{table}[ht!]  
    \centering
    \caption{Empirical powers of independence test statistics based on Spearman and Kendall's correlations under global structure.}
    \label{table:emp_power_1} 
    \resizebox{\textwidth}{!}{
  \begin{tabular}{@{\extracolsep{5pt}} ccccccccc} 
  \\[-1.8ex]\hline 
  \hline \\[-1.8ex]
  $\rho$ & 0.01 & 0.02 & 0.03 & 0.04 & 0.05 & 0.06 & 0.07 & 0.08 \\
  \hline \\[-1.8ex] 
  \multicolumn{9}{c}{Normal distribution}\\
  $L_{\brho,2}$ & 0.06 & 0.071 & 0.175 & 0.295 & 0.538 & 0.804 & 0.958 & 0.995 \\ 
  $L_{\brho,\log}$ & 0.065 & 0.063 & 0.137 & 0.243 & 0.417 & 0.674 & 0.864 & 0.969 \\ 
  $L_{\brho,\max}$ & 0.031 & 0.04 & 0.041 & 0.071 & 0.107 & 0.18 & 0.311 & 0.584 \\ 
  $L_{\tbrho,2}$ & 0.06 & 0.072 & 0.178 & 0.3 & 0.54 & 0.807 & 0.958 & 0.995 \\ 
  $L_{\tbrho,\log}$ & 0.071 & 0.074 & 0.145 & 0.264 & 0.443 & 0.683 & 0.884 & 0.971 \\ 
  $L_{\tbrho,\max}$ & 0.031 & 0.04 & 0.041 & 0.071 & 0.107 & 0.179 & 0.31 & 0.583 \\ 
  $L_{\K,2}$ & 0.061 & 0.075 & 0.178 & 0.31 & 0.543 & 0.804 & 0.956 & 0.996 \\ 
  $L_{\K,\log}$ & 0.063 & 0.08 & 0.173 & 0.298 & 0.503 & 0.782 & 0.948 & 0.993 \\ 
  $L_{\K,\max}$ & 0.037 & 0.037 & 0.051 & 0.073 & 0.122 & 0.203 & 0.356 & 0.616 \\ 
  \hline \\[-1.8ex] 
  \multicolumn{9}{c}{Cauchy distribution}\\
$L_{\brho,2}$ & 0.241 & 0.836 & 0.997 & 1 & 1 & 1 & 1 & 1 \\ 
$L_{\brho,\log}$ & 0.183 & 0.693 & 0.982 & 1 & 1 & 1 & 1 & 1 \\ 
$L_{\brho,\max}$ & 0.057 & 0.224 & 0.713 & 0.987 & 1 & 1 & 1 & 1 \\ 
$L_{\tbrho,2}$ & 0.247 & 0.841 & 0.997 & 1 & 1 & 1 & 1 & 1 \\ 
$L_{\tbrho,\log}$ & 0.196 & 0.707 & 0.983 & 1 & 1 & 1 & 1 & 1 \\ 
$L_{\tbrho,\max}$ & 0.057 & 0.223 & 0.713 & 0.987 & 1 & 1 & 1 & 1 \\ 
$L_{\K,2}$ & 0.248 & 0.852 & 0.999 & 1 & 1 & 1 & 1 & 1 \\ 
$L_{\K,\log}$ & 0.232 & 0.829 & 0.998 & 1 & 1 & 1 & 1 & 1 \\ 
$L_{\K,\max}$ & 0.071 & 0.279 & 0.791 & 0.999 & 1 & 1 & 1 & 1 \\ 
\hline \\[-1.8ex] 
\multicolumn{9}{c}{Mixed distribution}\\
$L_{\brho,2}$ & 0.079 & 0.256 & 0.6 & 0.882 & 0.992 & 1 & 1 & 1 \\ 
$L_{\brho,\log}$ & 0.077 & 0.182 & 0.461 & 0.763 & 0.964 & 0.999 & 1 & 1 \\ 
$L_{\brho,\max}$ & 0.038 & 0.083 & 0.293 & 0.697 & 0.954 & 0.996 & 1 & 1 \\ 
$L_{\tbrho,2}$ & 0.081 & 0.261 & 0.603 & 0.883 & 0.992 & 1 & 1 & 1 \\ 
$L_{\tbrho,\log}$ & 0.083 & 0.203 & 0.477 & 0.778 & 0.968 & 0.999 & 1 & 1 \\ 
$L_{\tbrho,\max}$ & 0.038 & 0.083 & 0.292 & 0.697 & 0.954 & 0.996 & 1 & 1 \\ 
$L_{\K,2}$ & 0.081 & 0.263 & 0.622 & 0.889 & 0.994 & 1 & 1 & 1 \\ 
$L_{\K,\log}$ & 0.085 & 0.251 & 0.582 & 0.874 & 0.993 & 1 & 1 & 1 \\ 
$L_{\K,\max}$ & 0.053 & 0.106 & 0.355 & 0.765 & 0.971 & 0.999 & 1 & 1 \\ 
\hline \\[-1.8ex] 
  \end{tabular} }
  \end{table} 

\begin{table}[ht!]
    \centering
    \caption{Empirical powers of independence test statistics based on Spearman and Kendall's correlations under local structure.}
    \label{table:emp_power_2} 
    \resizebox{\textwidth}{!}{
  \begin{tabular}{@{\extracolsep{5pt}} ccccccccc} 
  \\[-1.8ex]\hline 
  \hline \\[-1.8ex]
  $\rho$ & 0.1 & 0.2 & 0.3 & 0.4 & 0.5 & 0.6 & 0.7 & 0.8 \\
  \hline \\[-1.8ex] 
  \multicolumn{9}{c}{Normal distribution}\\
  $L_{\brho,2}$ & 0.059 & 0.085 & 0.13 & 0.212 & 0.35 & 0.432 & 0.512 & 0.594 \\ 
  $L_{\brho,\log}$ & 0.054 & 0.081 & 0.114 & 0.25 & 0.504 & 0.715 & 0.937 & 0.998 \\ 
  $L_{\brho,\max}$ & 0.059 & 0.818 & 1 & 1 & 1 & 1 & 1 & 1 \\ 
  $L_{\tbrho,2}$ & 0.06 & 0.089 & 0.131 & 0.219 & 0.357 & 0.436 & 0.516 & 0.595 \\ 
  $L_{\tbrho,\log}$ & 0.066 & 0.09 & 0.131 & 0.269 & 0.528 & 0.738 & 0.945 & 0.999 \\ 
  $L_{\tbrho,\max}$ & 0.059 & 0.818 & 1 & 1 & 1 & 1 & 1 & 1 \\ 
  $L_{\K,2}$ & 0.062 & 0.091 & 0.144 & 0.267 & 0.492 & 0.676 & 0.829 & 0.939 \\ 
  $L_{\K,\log}$ & 0.057 & 0.092 & 0.154 & 0.314 & 0.628 & 0.887 & 0.994 & 1 \\ 
  $L_{\K,\max}$ & 0.068 & 0.825 & 1 & 1 & 1 & 1 & 1 & 1 \\ 
  \hline \\[-1.8ex] 
  \multicolumn{9}{c}{Cauchy distribution}\\
$L_{\brho,2}$ & 0.092 & 0.164 & 0.261 & 0.384 & 0.456 & 0.513 & 0.577 & 0.626 \\ 
$L_{\brho,\log}$ & 0.083 & 0.167 & 0.326 & 0.537 & 0.799 & 0.941 & 0.99 & 1 \\ 
$L_{\brho,\max}$ & 0.866 & 1 & 1 & 1 & 1 & 1 & 1 & 1 \\ 
$L_{\tbrho,2}$ & 0.097 & 0.167 & 0.268 & 0.388 & 0.456 & 0.513 & 0.577 & 0.626 \\ 
$L_{\tbrho,\log}$ & 0.091 & 0.176 & 0.356 & 0.557 & 0.82 & 0.948 & 0.992 & 1 \\ 
$L_{\tbrho,\max}$ & 0.866 & 1 & 1 & 1 & 1 & 1 & 1 & 1 \\ 
$L_{\K,2}$ & 0.103 & 0.206 & 0.372 & 0.575 & 0.774 & 0.887 & 0.944 & 0.976 \\ 
$L_{\K,\log}$ & 0.109 & 0.232 & 0.483 & 0.778 & 0.958 & 0.999 & 1 & 1 \\ 
$L_{\K,\max}$ & 0.894 & 1 & 1 & 1 & 1 & 1 & 1 & 1 \\ 
\hline \\[-1.8ex] 
\multicolumn{9}{c}{Mixed distribution}\\
$L_{\brho,2}$ & 0.088 & 0.173 & 0.288 & 0.363 & 0.477 & 0.502 & 0.583 & 0.615 \\ 
$L_{\brho,\log}$ & 0.084 & 0.177 & 0.335 & 0.561 & 0.775 & 0.937 & 0.99 & 1 \\ 
$L_{\brho,\max}$ & 0.862 & 1 & 1 & 1 & 1 & 1 & 1 & 1 \\ 
$L_{\tbrho,2}$ & 0.091 & 0.176 & 0.293 & 0.365 & 0.481 & 0.504 & 0.583 & 0.616 \\ 
$L_{\tbrho,\log}$ & 0.095 & 0.185 & 0.353 & 0.577 & 0.795 & 0.952 & 0.992 & 1 \\ 
$L_{\tbrho,\max}$ & 0.862 & 1 & 1 & 1 & 1 & 1 & 1 & 1 \\ 
$L_{\K,2}$ & 0.098 & 0.216 & 0.393 & 0.586 & 0.742 & 0.879 & 0.946 & 0.983 \\ 
$L_{\K,\log}$ & 0.102 & 0.245 & 0.491 & 0.773 & 0.955 & 0.999 & 1 & 1 \\ 
$L_{\K,\max}$ & 0.896 & 1 & 1 & 1 & 1 & 1 & 1 & 1 \\ 
\hline \\[-1.8ex] 
\end{tabular} }
\end{table}

From Tables \ref{table:emp_power_1} and \ref{table:emp_power_2}, we observe that the power of all test statistics increases as the correlation strength $\rho$ increases. This demonstrates the effectiveness of rank-based tests, especially under heavy-tailed distributions. For global correlation structures, tests based on $\ell_2$ loss and Stein's loss tend to have higher power. In contrast, for local correlation structures, maximum-type tests exhibit better power. We leave the theoretical analysis of these powers as a future work. Overall, our proposed test statistics $L_{\brho,\log}$, $L_{\tbrho,2}$, and $L_{\tbrho,\log}$ demonstrate comparable performance across various scenarios.

\section{Discussion}
For the Stieltjes transform of the considered matrix, we can get study its limit which yields the limit of LSS and the CLT which yields the CLT of LSS. We summarize the results as following for $\Im(z)>c$. 
\begin{itemize}
    \item For the Gram matrix $\g_n \in \mR^{n \times n}$, we have
    \begin{align*}
        & \mbox{LSD}:~F^{\g_n} \tod  F_{1/y},~a.s.;\\
        & \mbox{Stieltjes transform:}~s_n(z)=\frac{1}{n}\tr(\g_n-z \bI_n)^{-1} \toas s(z)=m(1/y,z);\\
        & \mbox{CLT:}~n\left(s_n(z)-\E s_n(z) \right) \tod \mbox{Gaussian Processes} \left(0, \sigma(z_1,z_2)\right),
    \end{align*}
    where 
    \begin{align*}
      n \E s_n(z)=n \cdot m(n/p,z)+\mu(z)+o(1).
    \end{align*}
    \item For re-scaled Spearman's rank correlation matrix $\brho_n/\y_n \in \mR^{p \times p}$,
        \begin{align*}
        & \mbox{LSD}:~F^{\brho_n/\y_n} \tod  \underline{F}_{1/y},~a.s.;\\
        & \mbox{Stieltjes transform:}~\underline{s}_n(z)=\frac{1}{p}\tr(\brho_n/\y_n-z \bI_n)^{-1}=\frac{n}{p}\left(s_n(z)+\frac{1}{z}\right)-\frac{1}{z}\\
        &~~~~~~~~~~~~~~~~~~~~~~~~~~~~~~~~~\toas \underline{s}(z)=\frac{1}{y}\left(s(z)+\frac{1}{z}\right)-\frac{1}{z};\\
       & \mbox{CLT:}~p \left(\underline{s}_n(z)-\E \underline{s}_n(z) \right) \tod \mbox{Gaussian Processes} \left(0, \sigma(z_1,z_2)\right),
    \end{align*}
    where 
    \begin{align*}
      p \E \underline{s}_n(z)=&n \cdot m(n/p,z)+\mu(z)+\frac{n-p}{z}+o(1)\\
      =&n\left(m(n/p,z)+(1-\frac{p}{n})\frac{1}{z}\right)+\mu(z)+o(1).
    \end{align*}
       \item For Spearman's rank correlation matrix $\brho_n\in \mR^{p \times p}$
    \begin{align*}
        & \mbox{LSD}:~F^{\brho_n} \tod  F_{y},~a.s.;\\
        & \mbox{Stieltjes transform:}~m_n(z)=\frac{1}{p}\tr(\brho_n-z \bI_n)^{-1}=\frac{1}{y_n} \underline{s}_n(z/y_n)\\
        &~~~~~~~~~~~~~~~~~~~~~~~~~~~~~~~~~~\toas \frac{1}{y} \underline{s}(z/y)=m(z)=m(y,z);\\
         &     \mbox{CLT:}~p\left(m_n(z)-\E m_n(z) \right) \tod \mbox{Gaussian Processes} \left(0, \frac{\sigma(z_1/y,z_2/y)}{y^2}\right),
    \end{align*}
    where 
    \begin{align*}
      p \E m_n(z)=p \cdot m(p/n,z)+\frac{\mu(z/y)}{y}+o(1).
    \end{align*}
%

           \item For improved Spearman's rank correlation matrix $\tbrho_n \in \mR^{p \times p}$
    \begin{align*}
      &  \mbox{LSD}:~F^{\tbrho_n} \tod  F_{y},~a.s.;\\
      &  \mbox{Stieltjes transform:}~\widetilde{m}_n(z)=\frac{1}{p}\tr(\tbrho_n-z \bI_n)^{-1} \toas m(z)=m(y,z);\\
    & \mbox{CLT:}~p\left(\widetilde{m}_n(z)-\E \widetilde{m}_n(z) \right) \tod \mbox{Gaussian Processes} \left(0, \frac{\sigma(z_1/y,z_2/y)}{y^2}\right),
    \end{align*}
    where 
    \begin{align*}
      p \E \widetilde{m}_n(z)=p \cdot m(p/n,z)+\frac{\mu(z/y)}{y}+\frac{\widetilde{\mu}(z/y)}{y} +o(1).
    \end{align*}
\end{itemize}
With these CLTs, we can construct hypothesis tests based on Spearman's and improved Spearman's correlation matrices. Our simulation studies demonstrate the practical applicability of these new test statistics.

In this work, we study the improved Pearson's correlation which is a standard U-statistic of order 3. Studying general U-statistic typed correlation matrices could be a topic of future work. Moreover, we compare the test statistics through simulations. Investigating the asymptotic distribution of test statistics under local alternatives could be another interesting future work.

\section{Proof}
\subsection{Proof sketch}
The rigorous proof of Theorem \ref{thm:LSS_clt} and Theorem \ref{thm:LSS_clt_3} will be presented in this section. As can be seen, the Gram-type of Spearman's rank correlation matrix $\g_n$ is formulated as the sum of independent outer product matrices, which reveals similar structure with sample covariance matrix. Therefore, our methodology is originated from the proof in \cite{bai2004clt} that establish the CLT of LSS of large dimensional sample covariance matrices. 

We denote $s_n^{(0)}(z)$ and $\underline{s}_n^{(0)}(z)$ as the Stieltjes transforms of $F_{n/p}$ and $\underline{F}_{n/p}$ respectively. By Cauchy's integral formula, we have
\begin{align}
    \int f(x)dG_n(x)=-\frac{1}{2\pi i}\int f(z)\cdot n\left(s_n(z)-s_n^{(0)}(z)\right)dz,\label{form:cauchy_integral_formula}
\end{align}
where the contour of this integral is closed and enclose the extreme eigenvalues of $\g_n$. It is noted that if the limit superior and limit inferior of extreme eigenvalues of $\g_n$ are contained in the support of $F_{y_0}$ with probability $1$, then for any function $f$ analytic on \eqref{form:support} and closed contour enclosing \eqref{form:support}, the formula \eqref{form:cauchy_integral_formula} holds for all sufficiently large $n$ with probability 1. However the concerntration of extreme eigenvalues are not trivial at all, and a more stronger control is presented in the following lemma.
\begin{lem}\label{lem:extreme_eigen_control}
    Under the same assumptions in Theorem \ref{thm:LSS_clt}, for any $\eta_l<(1-\sqrt{y})^2$, $\eta_r>(1+\sqrt{y})^2$ and any $m>0$,
    \begin{align}
        P\left(\lambda_{1}(\brho_{n})>\eta_{r}\right)=o(n^{-m}),\quad P\left(\lambda_{\min\{n,p\}}(\brho_{n})\leq\eta_{l}\right)=o(n^{-m}).
    \end{align}
\end{lem}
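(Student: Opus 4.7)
The plan is to adapt the Bai-Silverstein ``no eigenvalues outside the support'' strategy to the permutation-constrained Gram matrix. Since $\brho_n/y_n$ and $\g_n$ share their non-zero eigenvalues, we have $\lambda_1(\brho_n) = y_n\lambda_1(\g_n)$ and $\lambda_{\min\{n,p\}}(\brho_n) = y_n\lambda_{\min\{n,p\}}(\g_n)$, and the identity $y(1-\sqrt{y_0})^2 = (1-\sqrt{y})^2$ (with $y_0 = 1/y$) shows that the edges match up correctly after rescaling. So it suffices to prove the analogous $o(n^{-m})$ bounds for $\g_n = p^{-1}\sum_{i=1}^p \s_i\s_i^\top$ relative to the Mar\u{c}enko-Pastur edges $(1\pm\sqrt{y_0})^2$. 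The columns $\s_i$ are i.i.d.\ in $i$ and each has entries uniformly bounded by $\sqrt{3}$, so no truncation step is required and all polynomial moments of the entries are finite.

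The core technical ingredient is a high-moment estimate on the Stieltjes transform. For any $z = u + iv$ with $u$ outside the support of $F_{y_0}$ and a vanishing offset $v = v_n$, I would establish, for every fixed integer $k\geq 1$,
\begin{align*}
\E\,|s_n(z) - \E s_n(z)|^{2k} = O(n^{-2k}v_n^{-c(k)}),
\end{align*}
together with the deterministic bias bound $|\E s_n(z) - s_n^{(0)}(z)| = O(n^{-1})$, the latter originating in the small gap between $\bSig$ and $\bI_n$ (this is precisely the source of the $\mu_3$ term in Remark \ref{remark1}). These are produced by the standard resolvent peeling: expanding $(\g_n - z\bI_n)^{-1}$ via the Sherman-Morrison identity column by column reduces everything to controlling the quadratic forms $\s_i^\top \B_i^{-1}\s_i$ for deterministic $\B_i$, and the covariance/moment identities for such forms under the uniform-permutation law (essentially Lemma \ref{lem:second_moment} and Lemma \ref{lem:higher_moments_control} of this paper) supply the needed $L^{2k}$ control. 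The Hadamard correction $\tr(\A\circ\B)$ and the new global term $\tr(\A)\tr(\B)/n$ enter only as lower-order contributions at the level of the variance estimate.

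With these Stieltjes-transform bounds in hand, the classical contour argument converts them into eigenvalue bounds. For $\eta_r > (1+\sqrt{y_0})^2$ and $v_n = n^{-\alpha}$ with $\alpha > 0$ small, the event $\{\lambda_1(\g_n) > \eta_r\}$ forces the inequality $\operatorname{Im} s_n(\eta_r + iv_n) \gtrsim 1/(n v_n)$, while $\operatorname{Im} s_n^{(0)}(\eta_r + iv_n) = O(v_n)$ since $\eta_r$ lies outside the support. Markov's inequality applied at exponent $2k$, with $k$ chosen large enough relative to $m$ and $\alpha$, yields $P(\lambda_1(\g_n) > \eta_r) = o(n^{-m})$. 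The smallest non-zero eigenvalue is handled symmetrically using the companion Stieltjes transform $\underline{s}_n(z)$ at a point $\eta_l < (1-\sqrt{y_0})^2$, together with the fact that the density of $F_{y_0}$ vanishes to the left of this edge.

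The main obstacle is the within-column dependence induced by the permutation constraint: the classical Bai-Silverstein machinery exploits i.i.d.\ entries, so the quadratic-form moment identities that drive the argument must be replaced by their permutation-distribution analogues alluded to in Remark \ref{remark1}. A clean device for obtaining the required high-order moments is to realize each $\s_i$ as the ranking of $n$ i.i.d.\ Uniform$(0,1)$ auxiliaries and then apply a Doob martingale / bounded-differences estimate on these underlying uniforms, exploiting the Lipschitz nature of the ranking map and independence across $i$. The additional bookkeeping required to track the correction terms $\tr(\A\circ\B)$ and $\tr(\A)\tr(\B)/n$ through the iterated resolvent expansion is the main technical cost, but each such correction is of smaller order and does not affect the final sub-polynomial rate.
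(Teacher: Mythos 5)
The paper's own proof of Lemma~\ref{lem:extreme_eigen_control} is a one-line citation: the right-edge bound is taken as a direct corollary of Proposition~2.3 of \cite{bao2019tracy_spearman}, which establishes a strong local law for Spearman's rho, and the left edge follows ``with the same steps.'' Your proposal instead attempts a from-scratch rigidity argument in the Bai--Silverstein / local-law style. That is a genuinely different (and much longer) route, and the overall skeleton --- high-moment bound on $s_n(z)-\E s_n(z)$ outside the bulk, a deterministic bias bound $\E s_n(z)-s_n^{(0)}(z)=O(n^{-1})$ traceable to the $\mathbf{\Sigma}\neq\mathbf{I}_n$ gap, then a Markov-type argument converting Stieltjes-transform control into eigenvalue localization --- is the right framework. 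However, as written there are two concrete gaps.

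First, the implication ``the event $\{\lambda_1(\g_n)>\eta_r\}$ forces $\operatorname{Im} s_n(\eta_r+iv_n)\gtrsim 1/(nv_n)$'' is false. The contribution of $\lambda_1$ to $\operatorname{Im} s_n(\eta_r+iv_n)$ is $\frac{1}{n}\cdot\frac{v_n}{(\lambda_1-\eta_r)^2+v_n^2}$, which is of order $1/(nv_n)$ only if $\lambda_1-\eta_r\lesssim v_n$; if $\lambda_1$ lands well to the right of $\eta_r$ the imaginary part at the fixed point $\eta_r+iv_n$ is tiny. The argument must be run on a grid of test points $E\in[\eta_r,K]$ of spacing $\sim v_n$, where $K$ is a deterministic a~priori bound on $\|\g_n\|$ (here one may take $K=n$, since $\|\s_i\|^2=n$), with a union bound over the $O(K/v_n)$ grid points; the polynomial cost of the union bound is what then forces the exponent $k$ to depend on $m$. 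Related to this, the Markov step does not close with the raw resolvent bound $\|\D^{-1}(E+iv_n)\|\leq v_n^{-1}$: plugging that norm into the quadratic-form moment control produces a $v_n^{-ck}$ factor that defeats the $(nv_n)^{2k}$ gain. One needs a bootstrap --- conditioning on an eigenvalue bound at a slightly smaller threshold so that $\|\D^{-1}(E+iv_n)\|$ stays $O(1)$ on the event under consideration --- which your sketch does not address.

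Second, the suggested device for the quadratic-form concentration (realize $\s_i$ as ranks of i.i.d.\ uniforms and apply a Doob martingale / bounded-differences estimate) cannot produce the rate that Lemma~\ref{lem:higher_moments_control} requires. Changing one underlying $X_{k'j}$ moves $\s_j$ by an $\ell^2$ distance of order $1$ (the $k'$-th coordinate of $\s_j$ can move by $O(1)$, the others by $O(1/n)$), so the per-coordinate oscillation of $\frac{1}{p}\s_j^\top\B\s_j$ is $O(n^{-1/2}\|\B\|)$. Summed over the $n$ revealed uniforms, McDiarmid/Azuma yields a fluctuation of order $\|\B\|$, whereas the correct rate, which is exactly Proposition~2.1 of \cite{bao2019tracy_spearman} (restated as Lemma~\ref{lem:higher_moments_control}), is $O(n^{-1/2+\delta}\|\B\|)$. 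Bounded-differences-type tools ignore the cancellation across coordinates that produces this extra $n^{-1/2}$; recovering it requires a genuinely finer argument (cumulant expansion, exchangeable pairs, or the approach in \cite{bao2019tracy_spearman}), and the ranking map is in any case a discontinuous step function, not Lipschitz. Thus your ``clean device'' would silently replace the key technical input with something strictly weaker. If you instead simply cite Lemma~\ref{lem:higher_moments_control} as an input (as the earlier part of your sketch does), then you are implicitly leaning on Bao's estimate anyway, and the honest comparison is that the paper cites the processed rigidity result directly while you propose to re-derive rigidity from the raw quadratic-form bound --- possible, but substantially more work than the one-sentence citation the paper uses, and not complete as written.
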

\begin{remark}
    The boundness of the largest eigenvalue is a direct corollary of Proposition 2.3 in \cite{bao2019tracy_spearman}. Due to the strong local law, the rigidity on the left edge can be derived with the same steps as right edge. Therefore, the boundness of the smallest eigenvalue can also be concluded.
\end{remark}
As has been discussed in the above, the focus of the problem is shifted to establishing the asymptotic distribution of 
\begin{align*}
    M_n(z)\defby n\left(s_n(z)-s_n^{(0)}(z)\right)=p\left(\underline{s}_n(z)-\underline{s}_n^{(0)}(z)\right).
\end{align*}
Since the CLT of LSS is obtained through a process of integration, we define a contour $\CC$ enclosing interval \eqref{form:support} as follows. Let $\eta_l$ and $\eta_r$ be any two numbers such that $\eqref{form:support}\subset(\eta_l,\eta_r)$, and choose $v_0>0$. The contour is described as a rectangle,
\begin{align*}
    \CC=\{x\pm iv_0:x\in[\eta_l,\eta_r]\}\cup\{x+iv:x\in\{\eta_l,\eta_r\},v\in[-v_0,v_0]\}.
\end{align*}
For further analysis, we consider $\widehat{M}_n(z)$ instead, a truncated version of $M_n(z)$, which is defined as 
\begin{align*}
    \widehat{M}_n(z)=\begin{cases}
        M_n(z),&z\in\CC_n,\\
        M_n(x+in^{-1}\varepsilon_n),&x\in\{\eta_l,\eta_r\}\text{ and }v\in[0,n^{-1}\varepsilon_n],\\
        M_n(x-in^{-1}\varepsilon_n),&x\in\{\eta_l,\eta_r\}\text{ and }v\in[-n^{-1}\varepsilon_n,0],
    \end{cases}
\end{align*}
where $\CC_n=\{x\pm iv_0:x\in[\eta_l,\eta_r]\}\cup\{x\pm iv:x\in\{\eta_l,\eta_r\},v\in[n^{-1}\varepsilon_n,v_0]\}$ and $\{\varepsilon_n\}$ is a sequence decreasing to zero satisfying $\varepsilon_n\geq n^{-\alpha}$ for some $\alpha\in(0,1)$. It follows that $\widehat{M}_n(z)$ pauses at $x+in^{-1}\varepsilon_n$ when $z$ tends to the real line, which makes the imaginary gap a natural bound to control the spectral norm or Euclidean distance of Stieltjes transforms. Besides, this truncation step have no influence on the limiting behavior of \eqref{form:cauchy_integral_formula} since for all sufficiently large $n$,
\begin{align} \label{form:Mn_hat}
    \left|\int_{\CC} f(z)\left(M_n(z)-\widehat{M}_n(z)\right)dz\right|\leq& C\varepsilon_n\left(\left|(1+\sqrt{y_0})^2\vee\lambda_1(\g_n)-\eta_r\right|\right.\\
    &\left.+\left|I(y_n>1)(1-\sqrt{y_0})^2\wedge\lambda_p(\g_n)-\eta_l\right|\right),\notag
\end{align}
which converges to zero. So now we have prepared all ingredients and the proof of Theorem \ref{thm:LSS_clt} can be completed by the following lemma establishing the convergence of $\widehat{M}_n(z)$ on $\CC$.
\begin{lem}\label{lem:clt_Mn(z)}
    Under the same assumptions in Theorem \ref{thm:LSS_clt}, $\{\widehat{M}_n(\cdot)\}$, as a stochastic process on $\CC$, converges weakly to a Gaussian process $M(\cdot)$ with mean function
    \begin{align*}
        \E M(z)=\mu(z),
    \end{align*}
    and covariance function
    \begin{align*}
        \cov(M(z_1),M(z_2))=\sigma(z_1,z_2),
    \end{align*}
    where $\mu(z)$ and $\sigma(z_1,z_2)$ are defined in Theorem \ref{thm:LSS_clt}.
\end{lem}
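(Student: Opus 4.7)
The plan is to follow the Bai--Silverstein roadmap for CLTs of linear spectral statistics of sample covariance matrices, adapted to the Gram matrix $\g_n = \frac{1}{p}\sum_{i=1}^p \s_i \s_i\trans$ whose columns $\s_i$ are the standardized rank vectors. Split the centered process into a stochastic fluctuation part and a deterministic bias part,
\[
\widehat{M}_n(z) = \underbrace{p\bigl(\underline{s}_n(z) - \E \underline{s}_n(z)\bigr)}_{\widehat{M}_n^{(1)}(z)} + \underbrace{p\bigl(\E \underline{s}_n(z) - \underline{s}_n^{(0)}(z)\bigr)}_{\widehat{M}_n^{(2)}(z)},
\]
and handle the two pieces separately. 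The first piece carries the Gaussian fluctuations and will produce the covariance kernel $\sigma(z_1, z_2)$; the second piece is a deterministic $O(1)$ bias that will produce the mean $\mu(z) = \mu_1(z) + \mu_2(z) + \mu_3(z)$.

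For $\widehat{M}_n^{(1)}(z)$, I introduce the filtration $\F_k = \sigma(\s_1, \ldots, \s_k)$, let $\E_k = \E[\cdot \mid \F_k]$, and write the martingale difference decomposition
\[
\widehat{M}_n^{(1)}(z) = \sum_{k=1}^p (\E_k - \E_{k-1}) \tr(\g_n - z\bI_n)^{-1}.
\]
The Sherman--Morrison identity and the usual resolvent manipulations reduce each increment to a functional of quadratic forms $\s_k\trans \D_k \s_k$, where $\D_k$ depends only on the leave-one-out Gram matrix and is $\F_{k-1}$-measurable. I would then apply the martingale CLT (Billingsley, Theorem 35.12): the Lyapunov condition follows from fourth-moment bounds on these quadratic forms (the content of Lemma \ref{lem:higher_moments_control}), and the conditional variance $\sum_k \E_{k-1}|(\E_k - \E_{k-1}) \tr(\g_n - z\bI_n)^{-1}|^2$ is computed using the covariance expansion from Remark \ref{remark1}. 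The main term $2\tr(\A\B)$ yields the standard Bai--Silverstein contribution $\sigma_1(z_1, z_2) + \sigma_2(z_1, z_2)$, while the new Hadamard term $-\tfrac{6}{5}\tr(\A\circ\B)$ and the rank-one correction $-\tfrac{4}{5n}\tr(\A)\tr(\B)$ combine (after contour integration against resolvents on $\CC$, which involves the identity $\tr(\A\circ\B)$ tested against diagonal entries of resolvents) to give the extra term $\sigma_3(z_1, z_2) = -2 y_0 \underline{s}'(z_1)\underline{s}'(z_2) / ((1+\underline{s}(z_1))^2 (1+\underline{s}(z_2))^2)$. Tightness on $\CC$ then follows from standard Lipschitz bounds on the resolvent.

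For the deterministic bias $\widehat{M}_n^{(2)}(z)$, I would expand $\E \underline{s}_n(z)$ around the Marcenko--Pastur solution via the approximate self-consistent equation satisfied by $\E \underline{s}_n(z)$, obtained by taking expectations in the identity $\underline{s}_n(z) = -\frac{1}{p}\sum_k \frac{1}{z(1 + p^{-1}\s_k\trans (\g_{n,(k)} - z\bI_n)^{-1} \s_k)}$, where $\g_{n,(k)}$ denotes the leave-one-out Gram. The same covariance expansion from Remark \ref{remark1} feeds into the bias of these quadratic forms at first order in $1/p$: the leading term reproduces the equation for $\underline{s}_n^{(0)}(z)$ and generates the Bai--Silverstein piece $\mu_1(z)$, the Hadamard and rank-one corrections jointly yield $\mu_2(z)$, and the discrepancy between $\bSig$ (with one zero eigenvalue and $n-1$ eigenvalues equal to $n/(n-1)$) and $\bI_n$ contributes the additional $\mu_3(z) = \underline{s}^3(z)/((1+\underline{s}(z))^2 - y_0 \underline{s}^2(z))$, as indicated in the displayed calculation of Remark \ref{remark1}. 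Summing these three deterministic corrections gives $\mu(z)$.

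The main obstacle is honest computation of the covariance of quadratic forms $\s\trans \A \s$ when $\s$ is a centered, standardized random permutation: unlike the i.i.d.\ component model or elliptical vectors, the coordinates of $\s$ satisfy the hard linear constraint $\one_n\trans \s = 0$ and are only exchangeable, which forces a mild long-range covariance structure and introduces the $\tr(\A)\tr(\B)/n$ term absent from the classical theory. Making this expansion quantitative with controlled error and propagating it through the martingale decomposition and through the self-consistent equation for the bias---while simultaneously establishing concentration of $\s_k\trans \D_k \s_k$ around its conditional expectation to the accuracy required for the Lyapunov step---is exactly what Lemmas \ref{lem:second_moment} and \ref{lem:higher_moments_control} are designed to supply, and the present lemma is essentially a packaging of those estimates together with the standard contour integration and tightness arguments.
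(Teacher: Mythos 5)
Your proposal follows essentially the same route as the paper's proof: the same split of $\widehat{M}_n(z)$ into the martingale fluctuation part (handled via the martingale CLT of Billingsley, with Lemma \ref{lem:higher_moments_control} for the Lyapunov-type bound and Lemma \ref{lem:second_moment} producing the three covariance contributions $2\tr(\A\B)$, $\tr(\A\circ\B)$, $\tr(\A)\tr(\B)/n$) and the deterministic bias part analyzed through the self-consistent Mar\u{c}enko--Pastur equation, with the $\bSig$-versus-$\bI_n$ discrepancy giving $\mu_3$. This matches the paper's Steps 1--3, so no substantive gap to report.
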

Theorem \ref{thm:LSS_clt_2} is a corollary of Theorem \ref{thm:LSS_clt} with the application of Cauchy's integral formula. More specifically, 
if $f(y\cdot)$ is analytic on an open interval containing \eqref{form:support}, $f(y_n\cdot)$ converges to $f(y\cdot)$ uniformly and by the method of Stieltjes transform,
\begin{align*}
    T(f)=&-\frac{1}{2\pi i}\int_{\CC} f(y_nz)M_n(z)dz\\
    =&-\frac{1}{2\pi i}\int_{\CC} f(y_nz)\widehat{M}_n(z)dz+o_P(1)\\
    =&-\frac{1}{2\pi i}\int_{\CC} f(yz)\widehat{M}_n(z)dz+o_P(1),
\end{align*}
where the contour $\CC$ encloses a neighborhoog of \eqref{form:support}. The second equality holds by the same procedure of \eqref{form:Mn_hat}, and the last equality holds by 
\begin{align*}
    \E\left|\int_{\CC} \left(f(y_nz)-f(yz)\right)\widehat{M}_n(z)dz\right|\leq&|\CC|\cdot\sup_{z\in\CC}\left|f(y_nz)-f(yz)\right|\cdot\sup_{z\in\CC}\E\left|\widehat{M}_n(z)\right|\\
    =&o(1).
\end{align*}
Therefore, by the convergence of $\widehat{M}_n(z)$ stated in Lemma \ref{lem:clt_Mn(z)}, we obtain Theorem \ref{thm:LSS_clt_2}.

The proof of Theorem \ref{thm:LSS_clt_3} basically follows the same approach as Lemma \ref{lem:clt_Mn(z)}. With the help of Theorem \ref{thm:LSS_clt_2}, we only need to figure out the difference of $F^{\tbrho_n}$ and $F^{\brho_n}$. By Cauchy's integral formula,
\begin{align*}
    p\left(\int f(x)dF^{\tbrho_n}(x)-\int f(x)dF^{\brho_n}(x)\right)=-\frac{1}{2\pi i}\int f(z)\cdot p\left(m_{F^{\tbrho_n}}(z)-m_{F^{\brho_n}}(z)\right)dz,
\end{align*}
and we are supposed to find the limit of
\begin{align*}
    L_n(z)\defby p\left(m_{F^{\tbrho_n}}(z)-m_{F^{\brho_n}}(z)\right). 
\end{align*}
As has been discussed before, we consider $\widehat{L}_n(z)$, a truncated version of $L_n(z)$, defined on $\CC$ by the same way of $\widehat{M}_n(z)$. By the rigidity of the edge of Kendall's correlation matrix, we are able to control the extreme eigenvalues of $\tbrho_n$ as in Lemma \ref{lem:extreme_eigen_control}, which implies $\int f(z)L_n(z)dz-\int f(z)\widehat{L}_n(z)dz\to 0$ almost surely. The following Lemma states the convergence of $\widehat{L}_n(z)$, which conclude Theorem \ref{thm:LSS_clt_3}.

\begin{lem}\label{lem:clt_Ln(z)}
    Under the same assumptions in Theorem \ref{thm:LSS_clt_3}, $\{\widehat{L}_n(\cdot)\}$, as a stochastic process on $\CC$, converges weakly to a non-random function 
    \begin{align*}
        L(z)=\widetilde{\mu}(z/y)/y,
    \end{align*}
    where $\widetilde{\mu}(z)$ is defined in Theorem \ref{thm:LSS_clt_3}.
\end{lem}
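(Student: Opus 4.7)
The plan is to reduce $\widehat{L}_n(z)$ to a deterministic functional of the resolvent of $\brho_n$ by combining the exact U-statistic identity $\brho_n = \frac{3}{n+1}\K_n + \frac{n-2}{n+1}\tbrho_n$ with a first-order resolvent perturbation. Rearranging gives
\begin{align*}
\tbrho_n - \brho_n = \frac{3}{n+1}(\tbrho_n - \K_n),
\end{align*}
and, together with the edge rigidity of Kendall's and Spearman's matrices (cf.\ \cite{bandeira2017marvcenko, bao2019tracy_spearman}), this implies $\|\tbrho_n - \brho_n\| = O_p(n^{-1})$. Plugging this into the resolvent identity and expanding once more about $\brho_n$, we obtain
\begin{align*}
\widehat{L}_n(z) = \frac{3}{n+1}\,\tr\bigl[(\K_n - \tbrho_n)(\brho_n - z\bI_p)^{-2}\bigr] + o_p(1),
\end{align*}
uniformly on the truncated contour $\CC$; the second-order remainder is bounded by $p\,\|(\brho_n - z\bI_p)^{-1}\|^3\,\|\tbrho_n - \brho_n\|^2$ and is negligible after truncation.

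To compute the limit of the right-hand side, I would substitute the sign-vector representations $\K_n = \frac{1}{n(n-1)}\sum_{i\neq j}\A_{ij}\A_{ij}^\top$ and $\tbrho_n = \frac{3}{n(n-1)(n-2)}\sum^{*}_{i,j,k}\A_{ij}\A_{ik}^\top$ into the trace, and use standard Bai--Silverstein quadratic-form manipulations to replace each random form $\A_{ij}^\top(\brho_n - z\bI_p)^{-2}\A_{ik}$ by its conditional expectation. The key inputs are the identity $\E[\A_{ij}\mid X_i = x] = \e(x)$ with $[\e(x)]_\ell = 2F_\ell(x_\ell)-1$, together with the Stieltjes-transform limits $\frac{1}{p}\tr[(\brho_n - z\bI_p)^{-1}]\to m(z)$ and $\frac{1}{p}\tr[(\brho_n - z\bI_p)^{-2}]\to m'(z)$ from Lemma \ref{lem:clt_Mn(z)}, and the corrector arising from $\bSig\neq\bI_n$ already identified in Remark \ref{remark1}. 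Concentration of $\widehat{L}_n(z)$ around its mean follows from a martingale-difference decomposition along $i=1,\ldots,n$ mirroring the one in Lemma \ref{lem:clt_Mn(z)}; the additional factor $\|\tbrho_n-\brho_n\|=O(n^{-1})$ makes the summed martingale variance of order $p^2/n^3 = o(1)$, so only the deterministic part survives and the limit is non-random.

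The main obstacle is the explicit evaluation of the expectation. A direct calculation shows that the first-order Hoeffding projections of $\K_n$ and $\tbrho_n$ both equal the matrix $\e(X_i)\e(X_i)^\top$ with zero diagonal, so the naive $O(n^{-1})$ difference $-\frac{1}{n}\sum_i g_1(X_i)$ between the two integrates to zero against any deterministic $M$ by the feature-wise independence $\E[\e_\ell(X_i)\e_m(X_i)] = 0$ for $\ell\neq m$. The non-vanishing limit must therefore be extracted from (i) the second-order degenerate components of both U-statistics and (ii) the correlation between $\K_n-\tbrho_n$ and the random resolvent of $\brho_n$, analogous to the derivation of the corrector $\mu_2(z)$ in Remark \ref{remark1}. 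After collecting these contributions and applying the Stieltjes-transform identities from the proof of Lemma \ref{lem:clt_Mn(z)}, the expression should collapse to $\widetilde{\mu}(z/y)/y = \underline{s}^3(z/y)(2+\underline{s}(z/y))/\bigl\{y\bigl[(1+\underline{s}(z/y))^2 - \underline{s}^2(z/y)/y\bigr]\bigr\}$, matching the stated limit.
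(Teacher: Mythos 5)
Your opening reduction is sound and essentially matches the paper: from $\brho_n-\tbrho_n=\frac{3}{n+1}(\K_n-\tbrho_n)$ and the edge rigidity of $\K_n$ and $\tbrho_n$ one gets $\|\tbrho_n-\brho_n\|=O(n^{-1})$, and on the truncated contour the quadratic remainder in the resolvent expansion is negligible, so $\widehat{L}_n(z)=\frac{3}{n+1}\tr\bigl[(\K_n-\tbrho_n)(\tbrho_n-z\bI_p)^{-2}\bigr]+o_P(1)$ (equivalently with $\brho_n$ inside). This is exactly the paper's first step.

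The flaw is in what you do next with $\K_n-\tbrho_n$. You assert that the first-order Hoeffding projections of $\K_n$ and $\tbrho_n$ coincide, so the leading $O(n^{-1})$ difference is a mean-zero sum that ``integrates to zero against any deterministic $M$'', forcing the nontrivial limit to come from second-order degenerate components. That picture is wrong. The two matrices are U-statistics of different orders, and the paper approximates them by $\U_n=\frac{3}{n}\sum_i\A_i\A_i\trans$ and $\V_n=\frac{2}{n}\sum_i\A_i\A_i\trans+\frac{1}{3}\bI_p$ respectively (here $\A_i=\E[\A_{ij}\mid\X_i]$ is your $\e(X_i)$), with Frobenius errors $o(p)$ guaranteed by Lemma~\ref{lem:approximation}. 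The coefficients $3/n$ versus $2/n$ differ because of the different U-statistic orders, and the Kendall kernel carries an extra $\frac{1}{3}\bI_p$ from $\E\,\bvare_{ij}\bvare_{ij}\trans$. Consequently
\begin{align*}
\V_n-\U_n=\tfrac{1}{3}\bI_p-\tfrac{1}{n}\textstyle\sum_i\A_i\A_i\trans=-\tfrac{1}{3}(\U_n-\bI_p),
\end{align*}
which is centered but algebraically tied to the very matrix whose resolvent you trace against, so the LLN heuristic does not apply; the matrix $M$ you integrate against is $(\U_n-z\bI_p)^{-2}$, not deterministic. This first-order mismatch is exactly where the limit lives: the paper evaluates $\frac{3}{n+1}\tr(\U_n-z\bI_p)^{-1}\V_n\to\frac{2ym(z)}{1+ym(z)}+ym(z)$ and $\frac{3}{n+1}\tr(\U_n-z\bI_p)^{-1}\U_n\to\frac{3ym(z)}{1+ym(z)}$ by a short leave-one-out argument, and then differentiates to obtain $ym'(z)-ym'(z)/(1+ym(z))^2=\widetilde{\mu}(z/y)/y$. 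Your route, by contrast, sends you chasing second-order Hoeffding components and a $\mu_2$-type corrector, neither of which contributes here, and you never actually carry out the evaluation you promise ``should collapse'' to the stated answer.
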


\subsection{Proof of Lemma \ref{lem:clt_Mn(z)}}
We decompose $\widehat{M}_n(z)$ into two parts as 
\begin{align*}
    \widehat{M}_n(z)=&n\left(s_n(z)-\E s_n(z)\right)+n\left(\E s_n(z)-s_n^{(0)}(z)\right)\\
    \defby &M_n^{(1)}(z)+M_n^{(2)}(z),
\end{align*}
where $M_n^{(1)}(z)$ is the random part and $M_n^{(2)}(z)$ is the non-random part. For simplicity, denote 
\begin{align*}
    \D(z)=&\g_n-z\bI_n,\quad\D_j(z)=\g_n-z\bI_n-\frac{1}{p}\s_j\s_j\trans,\\
    \beta_j(z)=&\frac{1}{1+\frac{1}{p}\s_j\trans\D_j^{-1}(z)\s_j},\quad\overline{\beta}_j(z)=\frac{1}{1+\frac{1}{p}\tr\bSig\D_j^{-1}(z)},\quad b_n(z)=\frac{1}{1+\frac{1}{p}\E\tr\bSig\D_1^{-1}(z)},\\
    \varepsilon_j(z)=&\frac{1}{p}\s_j\trans\D_j^{-1}(z)\s_j-\frac{1}{p}\tr\bSig\D_j^{-1}(z),\quad\delta_j(z)=\frac{1}{p}\s_j\trans\D_j^{-2}(z)\s_j-\frac{1}{p}\E\tr\bSig\D_j^{-2}(z).
\end{align*}
Note that $\beta_j(z)$, $\overline{\beta}_j(z)$ and $b_n(z)$ are all bounded by $\frac{|z|}{\Im(z)}$, where $\Im(\cdot)$ is the imaginary part. And by some matrix identity, we have
\begin{align}
    \D_j^{-1}(z)=&\D^{-1}(z)+\frac{1}{p}\beta_j(z)\trans\D_j^{-1}(z)\s_j\s_j\trans\D_j^{-1}(z),\label{form:leave_one_out}\\
    \underline{s}_n(z)=&-\frac{1}{z}\cdot\frac{1}{p}\sum_{i=1}^p\beta_i(z).\label{form:relation_to_zm(z)}
\end{align}

\textbf{Step 1. Finite-dimensional weak convergence of $M_n^{(1)}(z)$.}

Let $\E_k(\cdot)$ be the conditional expectation with respect to the $\sigma$-field generated by $\s_1,\cdots,\s_k$. By martingale difference decomposition,
\begin{align*}
    M_n^{(1)}(z)=&\sum_{j=1}^p\left(\E_j\tr\D^{-1}(z)-\E_{j-1}\tr\D^{-1}(z)\right)\\
    =&-\frac{1}{p}\sum_{j=1}^p\left(\E_j-\E_{j-1}\right)\beta_j(z)\s_j\trans\D_j^{-2}(z)\s_j\\
    =&-\frac{1}{p}\sum_{j=1}^p\left(\E_j-\E_{j-1}\right)\left(\overline{\beta}_j(z)-\overline{\beta}_j^2(z)\varepsilon_j(z)+\overline{\beta}_j^2(z)\beta_j(z)\varepsilon_j^2(z)\right)\s_j\trans\D_j^{-2}(z)\s_j\\
    =&-\sum_{j=1}^p\E_j\left(\overline{\beta}_j(z)\delta_j(z)+\overline{\beta}_j^2(z)\varepsilon_j(z)\frac{1}{p}\tr\bSig\D_j^{-2}(z)\right)+o_P(1),
\end{align*}
where the second-to-last equality holds by the identity $\beta_j(z)=\overline{\beta}_j(z)-\overline{\beta}_j^2(z)\varepsilon_j(z)+\overline{\beta}_j^2(z)\beta_j(z)\varepsilon_j^2(z)$, and the last equality holds by Lemma \ref{lem:higher_moments_control}.
The dominant term denoted by 
\begin{align*}
    Y_j(z)=\E_j\left(\overline{\beta}_j(z)\delta_j(z)+\overline{\beta}_j^2(z)\varepsilon_j(z)\frac{1}{p}\tr\bSig\D_j^{-2}(z)\right),\quad j=1,\cdots,p,
\end{align*}
is still a martingale difference sequence. For some fixed $r>0$, since
\begin{align*}
    \sum_{i=1}^r\alpha_iM_n^{(1)}(z_i)=\sum_{j=1}^p\sum_{i=1}^r\alpha_iY_j(z_i)+o_P(1),
\end{align*}
by martingale CLT in Lemma \ref{lem:martingale_clt}, it suffices to verify
\begin{align}
    \sum_{j=1}^p\E\left|\sum_{i=1}^m\alpha_iY_j(z_i)\right|^4\to 0,\label{form:martingale_clt_condition1}
\end{align}
and find the limit of convergence in probability of 
\begin{align}
    \sum_{j=1}^n\E_{j-1}Y_j(z_1)Y_j(z_2).\label{form:martingale_clt_condition2}
\end{align}
By Lemma \ref{lem:higher_moments_control},
\begin{align*}
    \E\left|Y_j(z)\right|^4\leq C\left(\frac{|z|^4}{\Im^4(z)}\E|\delta_j(z)|^4+\frac{|z|^8}{\Im^{16}(z)}\E|\varepsilon_j(z)|^4\right)=o(n^{-1}),
\end{align*}
which implies \eqref{form:martingale_clt_condition1}.
As for \eqref{form:martingale_clt_condition2}, observe that
\begin{align*}
    \overline{\beta}_j(z)\delta_j(z)+\overline{\beta}_j^2(z)\varepsilon_j(z)\frac{1}{p}\tr\bSig\D_j^{-2}(z)=\frac{d}{dz}\overline{\beta}_j(z)\varepsilon_j(z),
\end{align*}
so we have 
\begin{align*}
    \frac{\partial^2}{\partial z_1\partial z_2}\E_{j-1}\left[\E_j\left(\overline{\beta}_j(z_1)\varepsilon_j(z_1)\right)\E_j\left(\overline{\beta}_j(z_2)\varepsilon_j(z_2)\right)\right]=\E_{j-1}Y_j(z_1)Y_j(z_2).
\end{align*}
With similar arguments on Page 571 of \cite{bai2004clt}, it suffices to determine the limit of 
\begin{align}
    \sum_{j=1}^p\E_{j-1}\left[\E_j\left(\overline{\beta}_j(z_1)\varepsilon_j(z_1)\right)\E_j\left(\overline{\beta}_j(z_2)\varepsilon_j(z_2)\right)\right].\label{form:partial_tmp}
\end{align}
Since by \eqref{form:leave_one_out} and \eqref{form:relation_to_zm(z)}, we have
\begin{align*}
    \E\left|\overline{\beta}_j(z)-b_n(z)\right|^2\leq\frac{C}{p^2}\E\left|\tr\bSig\D_j^{-1}(z)-\E\tr\bSig\D_1^{-1}(z)\right|^2
    =O(p^{-1}),
\end{align*}
and 
\begin{align*}
    \left|b_n(z)+z\underline{s}(z)\right|\leq\left|b_n(z)-\E\beta_1(z)\right|+\left|\E\beta_1(z)+z\underline{s}(z)\right|\\
    =o(1).
\end{align*}
Therefore, we only need find the limit of 
\begin{align}
    z_1z_2\underline{s}(z_1)\underline{s}(z_2)\sum_{j=1}^p\E_{j-1}\left[\E_j\varepsilon_j(z_1)\E_j\varepsilon_j(z_2)\right].\label{form:partial}
\end{align}
By Lemma \ref{lem:second_moment} and \eqref{form:leave_one_out},
\begin{align*}
    z_1z_2\underline{s}(z_1)\underline{s}(z_2)\sum_{j=1}^p\E_{j-1}\E_j\varepsilon_j(z_1)\E_j\varepsilon_j(z_2)=2I_1-\frac{6}{5}I_2-\frac{4}{5}I_3+O_P(p^{-1}),
\end{align*}
where 
\begin{align*}
    I_1=&z_1z_2\underline{s}(z_1)\underline{s}(z_2)\frac{1}{p^2}\sum_{j=1}^p\tr\left(\E_j\D^{-1}(z_1)\E_j\D^{-1}(z_2)\right),\\
    I_2=&z_1z_2\underline{s}(z_1)\underline{s}(z_2)\frac{1}{p^2}\sum_{j=1}^p\tr\left(\E_j\D^{-1}(z_1)\circ\E_j\D^{-1}(z_2)\right),\\
    I_3=&z_1z_2\underline{s}(z_1)\underline{s}(z_2)\frac{1}{np^2}\sum_{j=1}^p\tr(\E_j\D^{-1}(z_1))\tr(\E_j\D^{-1}(z_2)).
\end{align*}
For $I_1$, since that 
\begin{align*}
    &\left|\tr\left(\E_j\D^{-1}(z_1)\E_j\D^{-1}(z_2)\right)-\tr\left(\bSig\E_j\D^{-1}(z_1)\E_j\D^{-1}(z_2)\right)\right|\\
    =&\left|-\frac{1}{n-1}\tr\left(\E_j\D^{-1}(z_1)\E_j\D^{-1}(z_2)\right)+\frac{1}{n-1}\one_n\trans\E_j\D^{-1}(z_1)\E_j\D^{-1}(z_2)\one_n\right|\\
    =&O(1),
\end{align*}
and similarly
\begin{align*}
    \left|\tr\left(\bSig\E_j\D^{-1}(z_1)\E_j\D^{-1}(z_2)\right)-\tr\left(\bSig\E_j\D^{-1}(z_1)\bSig\E_j\D^{-1}(z_2)\right)\right|=O(1),
\end{align*}
the effect of the multiplying $\bSig$ is negligible and 
\begin{align*}
    I_1=z_1z_2\underline{s}(z_1)\underline{s}(z_2)\frac{1}{p^2}\sum_{j=1}^p\tr\left(\bSig\E_j\D^{-1}(z_1)\bSig\E_j\D^{-1}(z_2)\right)+O(n^{-1}).
\end{align*}
With similar arguments on Pages 572-578, by Lemma \ref{lem:higher_moments_control},
\begin{align}
    I_1=\log\frac{\underline{s}(z_1)-\underline{s}(z_2)}{\underline{s}(z_1)\underline{s}(z_2)(z_1-z_2)}+o_P(1).\label{form:I_1_simplified}
\end{align}
For $I_2$, following similar steps on Pages 1247-1249 of \cite{pan2008central} with applications of Lemma \ref{lem:higher_moments_control}, we have
\begin{align}
    I_2=&z_1z_2\underline{s}(z_1)\underline{s}(z_2)\frac{1}{p^2}\sum_{j=1}^p\tr\left(\E\D^{-1}(z_1)\circ\E\D^{-1}(z_2)\right)+o_P(1)\label{form:start}\\
    =&\underline{s}(z_1)\underline{s}(z_2)\frac{1}{p^2}\sum_{j=1}^p\tr\left((\underline{s}(z_1)\bSig+\bI_n)^{-1}\circ(\underline{s}(z_2)\bSig+\bI_n)^{-1}\right)+o_P(1).\nonumber
\end{align} 
Denote $\e_k\in\mR^n$ as the unit vector with $k$-th element being one and the rest being zero. Since that
\begin{align*}
    &\left|\e_k\trans\left(\underline{s}(z)\bSig+\bI_n\right)^{-1}\e_k-(\underline{s}(z)+1)^{-1}\right|\\
    =&\left|\underline{s}(z)(\underline{s}(z)+1)^{-1}\e_k\trans\left(\left(\underline{s}(z)\bSig+\bI_n\right)^{-1}(\bI_n-\bSig)\right)\e_k\right|\\
    \leq&\frac{C}{n-1}\left|\e_k\trans\left(\underline{s}(z)\bSig+\bI_n\right)^{-1}\e_k\right|+\frac{C}{n-1}\left|\e_k\trans\left(\underline{s}(z)\bSig+\bI_n\right)^{-1}\one_n\one_n\trans\e_k\right|\\
    =&O(n^{-\frac{1}{2}}),
\end{align*}
$\bSig$ can be approximated by $\bI_n$ and 
\begin{align*}
    &\tr\left((\underline{s}(z_1)\bSig+\bI_n)^{-1}\circ(\underline{s}(z_2)\bSig+\bI_n)^{-1}\right)\\
    =&\sum_{k=1}^n\e_k\trans(\underline{s}(z_1)\bSig+\bI_n)^{-1}\e_k\e_k\trans(\underline{s}(z_2)\bSig+\bI_n)^{-1}\e_k\\
    =&n\left(\underline{s}(z_1)+1\right)^{-1}\left(\underline{s}(z_2)+1\right)^{-1}+O(1).
\end{align*}
Thus, we obtain 
\begin{align}
    I_2=\frac{y_0\underline{s}(z_1)\underline{s}(z_2)}{\left(\underline{s}(z_1)+1\right)\left(\underline{s}(z_2)+1\right)}+o_P(1).\label{form:I_2_simplified}
\end{align}
$I_3$ can be simplified by the following approximation steps,
\begin{align*}
    \tr\D^{-1}(z)=\tr\left(-z\underline{s}(z)\bSig-z\bI\right)^{-1}+o_P(1)=n\left(z\underline{s}+z\right)^{-1}+o_P(1),
\end{align*}
which implies 
\begin{align}
    I_3=\frac{y_0\underline{s}(z_1)\underline{s}(z_2)}{\left(\underline{s}(z_1)+1\right)\left(\underline{s}(z_2)+1\right)}+o_P(1).\label{form:I_3_simplified}
\end{align}
Collecting \eqref{form:I_1_simplified}, \eqref{form:I_2_simplified} and \eqref{form:I_3_simplified}, we have 
\begin{align*}
    \eqref{form:partial}\to2\log\frac{\underline{s}(z_1)-\underline{s}(z_2)}{\underline{s}(z_1)\underline{s}(z_2)(z_1-z_2)}-\frac{2y_0\underline{s}(z_1)\underline{s}(z_2)}{\left(\underline{s}(z_1)+1\right)\left(\underline{s}(z_2)+1\right)},\quad\text{in probability,}
\end{align*}
which conclude by taking derivatives
\begin{align}
    \eqref{form:martingale_clt_condition2}\to\sigma(z_1,z_2),\quad\text{in probability.}
\end{align}

\textbf{Step 2. Tightness of $M_n^{(1)}(z)$.}

Combined with finite-dimensional weak convergence of $M_n^{(1)}(z)$ and tightness on $z\in\CC_n$, we are able to prove the weak convergence of stochastic process $M_n^{(1)}(\cdot)$. To prove its tightness, by Theorem 12.3 of \cite{billingsley1968convergence}, it suffices to verify
\begin{align*}
    \sup_{n;z_1,z_2\in\CC_n}\frac{\E|M_n^{(1)}(z_1)-M_n^{(1)}(z_2)|^2}{|z_1-z_2|^2}<\infty.
\end{align*}
By Lemma \ref{lem:extreme_eigen_control}, 
\begin{align}
    \E\|\D^{-1}(z)\|^k\leq C_1+v^{-k}P\left(\|\G\|\geq\eta_r\text{ or }\lambda_{\min}(\G)\leq\eta_l\right)\leq C,\label{form:uniform_control}
\end{align}
for sufficiently large $l$. We emphasize that the moments bound here are uniform in $n$ and $z\in\CC_n$, that is, the constant $C$ is independent of $n$ and $z\in\CC_n$. By the same way in \eqref{form:uniform_control}, one can prove that the moments of $\|\D_j^{-1}(z)\|$ is also bounded uniformly in $n$ and $z\in\CC_n$. Therefore, we extend Lemma \ref{lem:higher_moments_control} slightly as
\begin{align}
    \left|\E a(v)\prod_{l=1}^q\left(\s_1\trans\B_l(v)\s_1-\frac{1}{p}\tr\bSig\B_l(v)\right)\right|\leq Cn^{-\frac{q}{2}+\delta},\label{form:higher_moment_uniform_control}
\end{align}
where $\B_l(v)$ is independent of $\s_1$ and $a(v)$ is some product of factors of the form $\beta_1(z)$ or $\s_1\trans\B_l(v)\s_1$. Following similar procedures on Pages 581-583 of \cite{bai2004clt} and applying \eqref{form:higher_moment_uniform_control}, we have
\begin{align*}
    \E\left|\frac{M_n^{(1)}(z_1)-M_n^{(1)}(z_2)}{z_1-z_2}\right|^2=\E\left|\tr\D^{-1}(z_1)\D^{-1}(z_2)-\E\tr\D^{-1}(z_1)\D^{-1}(z_2)\right|^2\leq C,
\end{align*}
uniformly in $z_1,z_2\in\CC_n$.

\textbf{Step 3. Uniform convergence of $M_n^{(2)}(z)$.}

Before proceeding, we collect some necessary results as follows, whose proofs are omitted since one can verify them in the same approaches on Pages 584-586 of \cite{bai2004clt}:
\begin{gather}
    \sup_{z\in\CC_n}|\E\underline{s}_n(z)-\underline{s}(z)|\to 0,\\
    \sup_{n;z\in\CC_n}\left\|\left(\frac{1}{y_n}\E\underline{s}_n(z)\bI_n+\bI_n\right)^{-1}\right\|<\infty,\\
    \sup_{z\in\CC_n}\left|\frac{\E\underline{s}_n^2(z)}{\left(1+\frac{1}{y_n}\E\underline{s}_n(z)\right)^2}\right|<\xi<1,\\
    \E\left|\tr\D^{-1}(z)\M-\E\tr\D^{-1}(z)\M\right|^2\leq C\|\M\|^2,
\end{gather}
where $\M$ is a non-random $n\times n$ matrix.

Next, we decompose $M_n^{(2)}(z)$ further into two parts as 
\begin{align*}
    M_n^{(2)}(z)=p\left(\E\underline{s}_n(z)-\underline{s}_n^{(1)}(z)\right)+p\left(\underline{s}_n^{(1)}(z)-\underline{s}_n^{(0)}(z)\right),
\end{align*}
where $\underline{s}_n^{(1)}(z)\in\mC^+$ is the unique solution to the following equation 
\begin{align}
    z=-\frac{1}{\underline{s}_n^{(1)}(z)}+\frac{1}{y_n}\int\frac{t}{1+t\underline{s}_n^{(1)}(z)}dF^{\bSig}(t).\label{form:m^{1}(z)_form1}
\end{align}
It is noted that \eqref{form:m^{1}(z)_form1} is a particular case of generalized MP equation formulated as 
\begin{align*}
    z=-\frac{1}{\underline{s}}+y_0\int\frac{t}{1+t\underline{s}}dH(t).
\end{align*}
By the fact that $\CC$ lies outside \eqref{form:support}, one can verify that
\begin{align*}
    \sup_{z\in\CC}\left|\underline{s}_n^{(1)}(z)-\underline{s}(z)\right|\to0,\quad\sup_{z\in\CC}\left|\underline{s}_n^{(0)}(z)-\underline{s}(z)\right|\to0.
\end{align*}
Throughout the rest proof, all bounds and convergence statements hold uniformly in $z\in\CC_n$, so we omit the argument $z$ for simplicity of writing.

On the one hand, since $\bSig$ has one eigenvalue of $0$ and $n-1$ of $n/(n-1)$, \eqref{form:m^{1}(z)_form1} can be expressed as 
\begin{align}
    z=-\frac{1}{\underline{s}_n^{(1)}}+\frac{\frac{1}{y_n}}{1+\frac{n}{n-1}\underline{s}_n^{(1)}}.\label{form:m^1(z)_form2}
\end{align}
Considering
\begin{align*}
    \frac{\underline{s}_n^{(1)}-\underline{s}_n^{(0)}}{\underline{s}_n^{(0)}\underline{s}_n^{(1)}}=\frac{1}{\underline{s}_n^{(0)}}-\frac{1}{\underline{s}_n^{(1)}}=\frac{\frac{1}{y_n}}{1+\underline{s}_n^{(0)}}-\frac{\frac{1}{y_n}}{1+\frac{n}{n-1}\underline{s}_n^{(1)}},
\end{align*}
we have 
\begin{align*}
    \underline{s}_n^{(1)}-\underline{s}_n^{(0)}=&\frac{\frac{1}{y_n}\underline{s}_n^{(0)}\underline{s}_n^{(1)}\left[\frac{n}{n-1}\underline{s}_n^{(1)}-\underline{s}_n^{(0)}\right]}{\left(1+\underline{s}_n^{(0)}\right)\left(1+\frac{n}{n-1}\underline{s}_n^{(1)}\right)}\\
    =&\frac{\frac{1}{y_n}\underline{s}_n^{(0)}\underline{s}_n^{(1)}\left[\underline{s}_n^{(1)}-\underline{s}_n^{(0)}\right]}{\left(1+\underline{s}_n^{(0)}\right)\left(1+\frac{n}{n-1}\underline{s}_n^{(1)}\right)}+\frac{1}{n-1}\cdot\frac{\frac{1}{y_n}\underline{s}_n^{(0)}\left(\underline{s}_n^{(1)}\right)^2}{\left(1+\underline{s}_n^{(0)}\right)\left(1+\frac{n}{n-1}\underline{s}_n^{(1)}\right)},
\end{align*}
which implies 
\begin{align}
    p\left(\underline{s}_n^{(1)}-\underline{s}_n^{(0)}\right)=&\frac{\frac{p}{n-1}\frac{1}{y_n}\underline{s}_n^{(0)}\left(\underline{s}_n^{(1)}\right)^2}{\left(1+\underline{s}_n^{(0)}\right)\left(1+\frac{n}{n-1}\underline{s}_n^{(1)}\right)}\left(1-\frac{\frac{1}{y_n}\underline{s}_n^{(0)}\underline{s}_n^{(1)}}{\left(1+\underline{s}_n^{(0)}\right)\left(1+\frac{n}{n-1}\underline{s}_n^{(1)}\right)}\right)^{-1}\nonumber\\
    \to&\frac{\underline{s}^3}{\left(1+\underline{s}\right)^2-y_0\underline{s}^2}.\label{form:expect_one_hand}
\end{align}

On the other hand, we consider 
\begin{align}
    p\left(\E\underline{s}_n-\underline{s}_n^{(1)}\right)=-\left(1-\frac{1}{y_n}\int\frac{t^2\underline{s}_n^{(1)}\E\underline{s}_n}{\left(1+t\underline{s}_n^{(1)}\right)\left(1+t\E\underline{s}_n\right)}dF^{\bSig}(t)\right)^{-1}p\underline{s}_n^{(1)}\E\underline{s}_nR_n,\label{form:expect_other_hand}
\end{align}
where
\begin{align*}
    R_n=\frac{1}{\E\underline{s}_n}+z-\frac{1}{y_n}\int\frac{t}{1+t\E\underline{s}_n}dF^{\bSig}(t).
\end{align*}
Therefore, it suffices to analyze the limit of $n\underline{s}_n^{(1)}\E\underline{s}_nR_n$. Denote
\begin{align*}
    \K(z)=\E\underline{s}_n(z)\bSig+\bI_n.
\end{align*}
We have 
\begin{align*}
    p\E\underline{s}_nR_n=&p\E\underline{s}_n\left(\frac{1}{\E\underline{s}_n}+z-\frac{1}{y_n}\int\frac{t}{1+t\E\underline{s}_n}dF^{\bSig}(t)\right)\\
    =&p\E\s_1\trans\D^{-1}\K^{-1}\s_1+z\E\underline{s}_n\E\tr\D^{-1}\bSig\K^{-1}
\end{align*}
Applying \eqref{form:leave_one_out} and \eqref{form:relation_to_zm(z)}, $\s_1\trans\D^{-1}=\beta_1\s_1\trans\D_1^{-1}$ and $z\E\underline{s}_n=-\E\beta_1$, which implies 
\begin{align*}
    p\E\underline{s}_nR_n=&p\E\beta_1\s_1\trans\D_1^{-1}\K^{-1}\s_1-\E\beta_1\E\tr\D^{-1}\bSig\K^{-1}\\
    =&p\E\beta_1\s_1\trans\D_1^{-1}\K^{-1}\s_1-\E\beta_1\E\tr\D_1^{-1}\bSig\K^{-1}+\E\beta_1\E\tr\left(\D_1^{-1}-\D^{-1}\right)\bSig\K^{-1}
\end{align*}
Considering respectively the following two terms,
\begin{gather}
    p\E\beta_1\s_1\trans\D_1^{-1}\K^{-1}\s_1-\E\beta_1\E\tr\D_1^{-1}\bSig\K^{-1},\label{form:expect_1}\\
    \E\beta_1\E\tr\left(\D_1^{-1}-\D^{-1}\right)\bSig\K^{-1}.\label{form:expect_2}
\end{gather}
By identity $\beta_1=b_n-b_n^2\gamma_1+\beta_1b_n^2\gamma_1^2$, \eqref{form:expect_1} can be split into three parts. 
For the first part,
\begin{align*}
    pb_n\E\s_1\trans\D_1^{-1}\K^{-1}\s_1-b_n\E\tr\D_1^{-1}\bSig\K^{-1}=0.
\end{align*}
For the second part, by \eqref{form:higher_moment_uniform_control} and Lemma \ref{lem:second_moment},
\begin{align}
    &-pb_n^2\E\gamma_1\s_1\trans\D_1^{-1}\K^{-1}\s_1+b_n^2\E\gamma_1\E\tr\D_1^{-1}\bSig\K^{-1}\nonumber\\
    =&-pb_n^2\E\left(\s_1\trans\D_1^{-1}\s_1-\frac{1}{p}\tr\bSig\D_1^{-1}\right)\left(\s_1\trans\D_1^{-1}\K^{-1}\s_1-\frac{1}{p}\tr\bSig\D_1^{-1}\K^{-1}\right)+o(1)\nonumber\\
    =&-z^2\underline{s}^2\E\left(\frac{2}{p}\tr\D_1^{-2}\K^{-1}-\frac{6}{5p}\tr\left(\D_1^{-1}\circ\D_1^{-1}\K^{-1}\right)-\frac{4}{5np}\tr\D_1^{-1}\tr\D_1^{-1}\K^{-1}\right)+o(1).\label{form:expect_1_limit}
\end{align}
For the third part, by \eqref{form:higher_moment_uniform_control},
\begin{align*}
    &\left|pb_n^2\E\beta_1\gamma_1^2\s_1\trans\D_1^{-1}\K^{-1}\s_1-b_n^2\E\beta_1\gamma_1^2\E\tr\D_1^{-1}\bSig\K^{-1}\right|\\
    =&pb_n^2\left|\cov\left(\beta_1\gamma_1^2,\s_1\trans\D_1^{-1}\K^{-1}\s_1\right)\right|\\
    \leq&pb_n^2\sqrt{\E\left|\beta_1\gamma_1^2\right|^2}\sqrt{\var\left(\s_1\trans\D_1^{-1}\K^{-1}\s_1\right)}\\
    \to&0.
\end{align*}
\eqref{form:expect_2} can be expressed by \eqref{form:leave_one_out} as 
\begin{align}
    \E\beta_1\E\tr\left(\D_1^{-1}-\D^{-1}\right)\bSig\K^{-1}
    =\frac{z^2\underline{s}^2}{p}\E\tr\bSig\D_1^{-1}\bSig\K^{-1}\D_1^{-1}+o(1),\label{form:expect_2_limit}
\end{align}
Collecting \eqref{form:expect_1_limit} and \eqref{form:expect_2_limit}, by \eqref{form:leave_one_out} and \eqref{form:higher_moment_uniform_control}, we obtain
\begin{align*}
    p\E\underline{s}_nR_n=-J_1+\frac{6}{5}J_2+\frac{4}{5}J_3+o(1),
\end{align*}
where 
\begin{align*}
    J_1=&\frac{z^2\underline{s}^2}{p}\left(2\E\tr\D^{-2}\K^{-1}-\E\tr\bSig\D^{-1}\bSig\K^{-1}\D^{-1}\right),\\
    J_2=&\frac{z^2\underline{s}^2}{p}\E\tr\left(\D^{-1}\circ\D^{-1}\K^{-1}\right),\\
    J_3=&\frac{z^2\underline{s}^2}{np}\E\tr\D^{-1}\tr\D^{-1}\K^{-1}.
\end{align*}
As has been discussed in \textbf{Step 1}, multiplying $\bSig$ has no influence on $J_1$, which implies 
\begin{align*}
    J_1=\frac{z^2\underline{s}^2}{p}\E\tr\bSig\D^{-1}\bSig\K^{-1}\D^{-1}+o(1).
\end{align*}
Following the same procedures on Pages 589-592 of \cite{bai2004clt} and applying \eqref{form:higher_moment_uniform_control}, we have 
\begin{align}
    J_1=\frac{y_0\underline{s}^2}{\left(\left(1+\underline{s}\right)^2-y_0\underline{s}^2\right)\left(1+\underline{s}\right)}+o(1).\label{form:J_1_simplified}
\end{align}
With similar arguments in \eqref{form:start}-\eqref{form:I_3_simplified} and applying \eqref{form:higher_moment_uniform_control}, we have 
\begin{align}
    J_2=&\frac{z^2\underline{s}^2}{p}\tr\left(\E\D^{-1}\circ\E\D^{-1}\K^{-1}\right)+o(1)\nonumber\\
    =&\frac{z^2\underline{s}^2}{p}\tr\left((-z\underline{s}\bSig-z\bI_n)^{-1}\circ(-z\underline{s}\bSig-z\bI_n)^{-1}(\underline{s}\bSig+\bI_n)^{-1}\right)+o(1)\nonumber\\
    =&\frac{y_0\underline{s}^2}{\left(1+\underline{s}\right)^3}+o(1).\label{form:J_2_simplified}
\end{align}
and 
\begin{align}
    J_3=&\frac{z^2\underline{s}^2}{np}\E\tr\E\D^{-1}\tr\E\D^{-1}\K^{-1}+o(1)\nonumber\\
    =&\frac{z^2\underline{s}^2}{np}\tr\left(-z\underline{s}\bSig-z\bI_n\right)^{-1}\tr\left(-z\underline{s}\bSig-z\bI_n\right)^{-1}\left(\underline{s}\bSig+\bI_n\right)^{-1}+o(1)\nonumber\\
    =&\frac{y_0\underline{s}^2}{\left(1+\underline{s}\right)^3}+o(1).\label{form:J_3_simplified}
\end{align}
Collecting \eqref{form:J_1_simplified}-\eqref{form:J_3_simplified}, we obtain 
\begin{align}
    p\E\underline{s}_nR_n=-\frac{y_0\underline{s}^2}{\left(\left(1+\underline{s}\right)^2-y_0\underline{s}^2\right)\left(1+\underline{s}\right)}+\frac{2y_0\underline{s}^2}{\left(1+\underline{s}\right)^3}+o(1).\label{form:pmr}
\end{align}
Together with \eqref{form:expect_one_hand}, \eqref{form:expect_other_hand} and \eqref{form:pmr}, 
\begin{align*}
    p\left(\E\underline{s}_n(z)-\underline{s}_n^{(0)}(z)\right)\to\mu(z).
\end{align*}

\subsection{Proof of Lemma \ref{lem:clt_Ln(z)}}
The main process is similar with the proof of Lemma \ref{lem:clt_Mn(z)}. First we prove the convergence of $\widehat{L}_n(z)$ in probability for each $z\in\CC$, then we show its tightness on $\CC$, which leads to the convergence of stochastic process.

\textbf{Step 1. Convergence of $\widehat{L}_n(z)$.}

Since that 
\begin{align*}
    \brho_n-\tbrho_n=\frac{3}{n+1}\left(\K_n-\tbrho_n\right),
\end{align*}
we have 
\begin{align*}
    \widehat{L}_n(z)=&\tr\left(\tbrho_n-z\bI_p\right)^{-1}\left(\brho_n-\tbrho_n\right)\left(\brho_n-z\bI_p\right)^{-1}\\
    =&\frac{3}{n+1}\tr\left(\tbrho_n-z\bI_p\right)^{-1}\K_n\left(\brho_n-z\bI_p\right)^{-1}\\
    &-\frac{3}{n+1}\tr\left(\tbrho_n-z\bI_p\right)^{-1}\tbrho_n\left(\brho_n-z\bI_p\right)^{-1}.
\end{align*}
We further expand $\left(\brho_n-z\bI_p\right)^{-1}$,
\begin{align*}
    &\tr\left(\tbrho_n-z\bI_p\right)^{-1}\K_n\left(\brho_n-z\bI_p\right)^{-1}-\tr\left(\tbrho_n-z\bI_p\right)^{-1}\K_n\left(\tbrho_n-z\bI_p\right)^{-1}\\
    =&\frac{3}{n+1}\tr\left(\tbrho_n-z\bI_p\right)^{-1}\K_n\left(\tbrho_n-z\bI_p\right)^{-1}\K_n\left(\brho_n-z\bI_p\right)^{-1}\\
    &-\frac{3}{n+1}\tr\left(\tbrho_n-z\bI_p\right)^{-1}\K_n\left(\tbrho_n-z\bI_p\right)^{-1}\tbrho_n\left(\brho_n-z\bI_p\right)^{-1}.
\end{align*}
By the rigidity of the edge of Kendall's rank  correlation matrix, $\|\K_n\|$ is uniformly bounded almost surely, and then subsequently $\tbrho_n=(n+1)\brho_n/(n-2)-3\K_n/(n-2)$ also has uniformly bounded spectral norm. So we have 
\begin{align*}
    &\frac{3}{n+1}\tr\left(\tbrho_n-z\bI_p\right)^{-1}\K_n\left(\brho_n-z\bI_p\right)^{-1}\\
    =&\frac{3}{n+1}\tr\left(\tbrho_n-z\bI_p\right)^{-1}\K_n\left(\tbrho_n-z\bI_p\right)^{-1}+o_P(1),
\end{align*}
and similarly,
\begin{align*}
    &\frac{3}{n+1}\tr\left(\tbrho_n-z\bI_p\right)^{-1}\tbrho_n\left(\brho_n-z\bI_p\right)^{-1}\\
    =&\frac{3}{n+1}\tr\left(\tbrho_n-z\bI_p\right)^{-1}\tbrho_n\left(\tbrho_n-z\bI_p\right)^{-1}+o_P(1),
\end{align*}
which implies
\begin{align*}
    \widehat{L}_n(z)=&\frac{3}{n+1}\tr\left(\tbrho_n-z\bI_p\right)^{-1}\K_n\left(\tbrho_n-z\bI_p\right)^{-1}\\
    &-\frac{3}{n+1}\tr\left(\tbrho_n-z\bI_p\right)^{-1}\tbrho_n\left(\tbrho_n-z\bI_p\right)^{-1}+o_P(1).
\end{align*}
Denote by $\A_i$ the conditional expectation of $\A_{ij}$ given $\X_i$, $\A_i=\E[\A_{ij}|\X_i]$. By the Hoeffding's decomposition of $\A_{ij}$ illustrated in \cite{bandeira2017marvcenko}, \cite{wu2022limiting} and \cite{li2023eigenvalues},
\begin{align*}
    \A_{ij}=\A_i+\A_j+\bvare_{ij},
\end{align*}
where $\bvare_{ij}$ is uncorrelated with $\A_i$ and $\A_j$.
So we naturally approximate $\tbrho_n$ and $\K_n$ by
\begin{align*}
    \U_n=\frac{3}{n}\sum_{i=1}^{n}\A_i\A_i\trans
\end{align*}
and
\begin{align*}
    \V_n=\frac{2}{n}\sum_{i=1}^{n}\A_i\A_i\trans+\frac{1}{3}\bI_p
\end{align*}
respectively.
The error of this approximation can be well controlled as follows.
\begin{lem}\label{lem:approximation}
    Suppose $\X_1,\cdots,\X_n$ i.i.d. from a poplulation $\X\in\mR^p$, whose entries are independent and absolutely continuous respect to the Lebesgue measure. Then we have 
    \begin{align}
        \E\left\|\tbrho_n-\U_n\right\|_F^2=o(p),\label{form:approx_Un}
    \end{align}
    and 
    \begin{align}
        \E\left\|\K_n-\V_n\right\|_F^2=o(p).\label{form:approx_Vn}
    \end{align}
\end{lem}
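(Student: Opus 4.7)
The plan is to substitute the Hoeffding decomposition $\A_{ij}=\A_i+\A_j+\bvare_{ij}$ into the U-statistic expressions defining $\tbrho_n$ and $\K_n$, identify the first-order projection that reconstructs $\U_n$ and $\V_n$, and control the remaining pieces in Frobenius norm using the orthogonality relations $\E[\bvare_{ij}\mid\X_i]=\E[\bvare_{ij}\mid\X_j]=\mathbf{0}$ together with the uniform bounds $|A_i^{(a)}|,|\varepsilon_{ij}^{(a)}|\le C$ and the cross-feature independence under the null.

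For \eqref{form:approx_Un}, expanding $\A_{ij}\A_{ik}\trans=(\A_i+\A_j+\bvare_{ij})(\A_i+\A_k+\bvare_{ik})\trans$ produces nine rank-one matrices. Only the $\A_i\A_i\trans$ block is free of the indices $j,k$, so summing it over the $(n-1)(n-2)$ pairs $(j,k)$ with $i,j,k$ distinct and applying the prefactor $3/[n(n-1)(n-2)]$ recovers $\U_n$ exactly. The difference $\tbrho_n-\U_n$ is therefore a sum of eight residual terms, and by $\|\sum_{s=1}^{8}T_s\|_F^2\le 8\sum_s\|T_s\|_F^2$ it suffices to show $\E\|T_s\|_F^2=o(p)$ term by term. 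For each $T_s$, I would expand $\E\|T_s\|_F^2$ as a double sum over feature indices $a,b\in\{1,\ldots,p\}$ and over two copies of the sample-index tuple, then use Hoeffding orthogonality and feature independence to force most configurations to vanish. For example, $\frac{3}{n(n-1)(n-2)}\sum_{i,j,k}^*\A_i\A_k\trans=\frac{3}{n(n-1)}[(\sum_i\A_i)(\sum_i\A_i)\trans-\sum_i\A_i\A_i\trans]$, and $\E\|\sum_i\A_i\|^4=O(n^2p^2)$ by feature-wise fourth-moment estimates, so this term contributes $O(p^2/n^2)=O(1)=o(p)$. The $\bvare$-containing terms are controlled analogously and are in fact smaller because $\E[\bvare_{ij}\mid\X_i]=\mathbf{0}$ eliminates further index pairings in the variance sum.

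For \eqref{form:approx_Vn}, expanding $\A_{ij}\A_{ij}\trans$ yields nine terms. The two diagonal blocks $\A_i\A_i\trans+\A_j\A_j\trans$ sum over $i\neq j$ to $2(n-1)\sum_i\A_i\A_i\trans$, producing the $\frac{2}{n}\sum_i\A_i\A_i\trans$ part of $\V_n$ after normalization. The term $\bvare_{ij}\bvare_{ij}\trans$ has expectation $\frac{1}{3}\bI_p$ by cross-feature independence (off-diagonal entries vanish because $\E[\varepsilon_{ij}^{(a)}\varepsilon_{ij}^{(b)}]=0$ for $a\neq b$, and the diagonal equals $\var(\varepsilon_{ij}^{(a)})=\frac{1}{3}$), providing the $\frac{1}{3}\bI_p$ piece of $\V_n$. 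The six remaining pieces — the $\A_i\A_j\trans+\A_j\A_i\trans$ cross terms, the four mixed $\A$–$\bvare$ products, and the centered fluctuation $\sum_{i\neq j}(\bvare_{ij}\bvare_{ij}\trans-\E[\bvare_{ij}\bvare_{ij}\trans])$ — are each handled by the same orthogonality arguments to produce $\E\|\cdot\|_F^2=o(p)$.

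The main obstacle is the combinatorial bookkeeping for the $\bvare$-heavy residuals, most notably $\sum_{i,j,k}^*\bvare_{ij}\bvare_{ik}\trans$ in the first expansion and $\sum_{i\neq j}(\bvare_{ij}\bvare_{ij}\trans-\E[\bvare_{ij}\bvare_{ij}\trans])$ in the second: their squared Frobenius norms become sextuple and quadruple sums over sample indices, and one must enumerate precisely which coincidence patterns between the two copies of indices survive the zero-conditional-expectation identities, then verify that the surviving count is absorbed by the $n^{-6}$ or $n^{-4}$ normalization. Once this index enumeration is in hand, the moment estimates themselves are routine given the uniform boundedness of $\A_i$ and $\bvare_{ij}$.
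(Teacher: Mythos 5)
Your proposal is correct in substance, but it takes a genuinely different route from the paper. The paper's own proof is essentially a two-line reduction: since each coordinate $X_a$ is absolutely continuous, the monotone map $Y_a=\Phi^{-1}(F_a(X_a))$ leaves all ranks and signs unchanged, so $\tbrho_n$, $\K_n$, $\U_n$, $\V_n$ are unaffected; the Frobenius-norm approximation bounds \eqref{form:approx_Un} and \eqref{form:approx_Vn} are then quoted directly from \cite{wu2022limiting} and \cite{li2023eigenvalues}, where they are established for the Gaussian/continuous case. You instead re-derive these estimates from scratch: plug the Hoeffding decomposition $\A_{ij}=\A_i+\A_j+\bvare_{ij}$ into the U-statistics, observe that the $\A_i\A_i\trans$ blocks (together with $\E\bvare_{ij}\bvare_{ij}\trans=\tfrac13\bI_p$, using $\var(\varepsilon^{(a)}_{ij})=1-\tfrac13-\tfrac13=\tfrac13$ and cross-feature independence) reproduce $\U_n$ and $\V_n$ exactly, and bound each residual via degeneracy of $\bvare_{ij}$ and feature independence. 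Your sample computation is right, and the deferred combinatorial step does close: degeneracy ($\E[\bvare_{ij}\mid\X_i]=\E[\bvare_{ij}\mid\X_j]=\mathbf{0}$) forces index coincidences, so for the term $\sum^*_{i,j,k}\bvare_{ij}\bvare_{ik}\trans$ only $O(n^4)$ of the $n^6$ configurations survive when the two feature indices agree (and $O(n^3)$ when they differ), which the $n^{-6}$ prefactor turns into $O(p/n^2+p^2/n^3)$; the other residuals come out $O(p^2/n^2)=O(1)$, comfortably $o(p)$. Two cosmetic cautions: $\A_{ij}$ is antisymmetric, so with $\A_i=\E[\A_{ij}\mid\X_i]$ the second projection actually enters with a minus sign (as in the cited references), which changes the signs of your cross terms but not the estimates; and your argument, being based only on the uniformity of $F_a(X_{ia})$ and bounded kernels, is already distribution-free, so the Gaussian-transformation step of the paper would be redundant in your write-up. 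What each approach buys: the paper's citation route is short and delegates the bookkeeping; yours is self-contained, makes explicit that the error is in fact $O(1)$ rather than merely $o(p)$, but requires carrying out the index enumeration that the paper avoids.
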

To replace $\tbrho_n$ and $\K_n$ with $\U_n$ and $\V_n$, we consider 
\begin{align*}
    &\frac{3}{n+1}\tr\left(\tbrho_n-z\bI_p\right)^{-1}\K_n\left(\tbrho_n-z\bI_p\right)^{-1}-\frac{3}{n+1}\tr\left(\tbrho_n-z\bI_p\right)^{-1}\K_n\left(\U_n-z\bI_p\right)^{-1}\\
    =&\frac{3}{n+1}\tr\left(\tbrho_n-z\bI_p\right)^{-1}\K_n\left(\tbrho_n-z\bI_p\right)^{-1}\left(\U_n-\tbrho_n\right)\left(\U_n-z\bI_p\right)^{-1}.
\end{align*}
By Cauchy's inequality and Lemma \ref{lem:approximation},
\begin{align*}
    &\E\left|\tr\left(\tbrho_n-z\bI_p\right)^{-1}\K_n\left(\tbrho_n-z\bI_p\right)^{-1}\left(\U_n-\tbrho_n\right)\left(\U_n-z\bI_p\right)^{-1}\right|\\
    \leq&\E\left(p\cdot\|\K_n\|\left\|\left(\tbrho_n-z\bI_p\right)^{-1}\right\|^2\left\|\left(\U_n-z\bI_p\right)^{-1}\right\|\tr\left(\U_n-\tbrho_n\right)^2\right)^{\frac{1}{2}}\\
    \lesssim&p^{\frac{1}{2}}\left(\E\tr\left(\U_n-\tbrho_n\right)^2\right)^{\frac{1}{2}}\\
    =&o(p),
\end{align*}
which conclude by Markov's inequality that 
\begin{align*}
    &\frac{3}{n+1}\tr\left(\tbrho_n-z\bI_p\right)^{-1}\K_n\left(\tbrho_n-z\bI_p\right)^{-1}\\
    =&\frac{3}{n+1}\tr\left(\tbrho_n-z\bI_p\right)^{-1}\K_n\left(\U_n-z\bI_p\right)^{-1}+o_P(1).
\end{align*}
Following similar steps above, we obtain 
\begin{align*}
    &\frac{3}{n+1}\tr\left(\tbrho_n-z\bI_p\right)^{-1}\K_n\left(\tbrho_n-z\bI_p\right)^{-1}\\
    =&\frac{3}{n+1}\tr\left(\U_n-z\bI_p\right)^{-1}\V_n\left(\U_n-z\bI_p\right)^{-1}+o_P(1).
\end{align*}
And similarly,
\begin{align*}
    &\frac{3}{n+1}\tr\left(\tbrho_n-z\bI_p\right)^{-1}\tbrho_n\left(\tbrho_n-z\bI_p\right)^{-1}\\
    =&\frac{3}{n+1}\tr\left(\U_n-z\bI_p\right)^{-1}\U_n\left(\U_n-z\bI_p\right)^{-1}+o_P(1).
\end{align*}
Observing that 
\begin{align*}
    \tr\left(\U_n-z\bI_p\right)^{-1}\V_n\left(\U_n-z\bI_p\right)^{-1}=&\tr\left(\U_n-z\bI_p\right)^{-2}\V_n\\
    =&\frac{d}{dz}\tr\left(\U_n-z\bI_p\right)^{-1}\V_n,
\end{align*}
so it suffices to find the limit of 
\begin{align}
    \frac{3}{n+1}\tr\left(\U_n-z\bI_p\right)^{-1}\V_n\label{form:item_1}
\end{align}
and 
\begin{align}
    \frac{3}{n+1}\tr\left(\U_n-z\bI_p\right)^{-1}\U_n.\label{form:item_2}
\end{align}
For simplicity of writing, we denote 
\begin{align*}
    \t_i=\sqrt{\frac{3}{n}}\A_i,\quad\H(z)=\U_n-z\bI_p,\quad\H_j(z)=\U_n-z\bI_p-\t_j\t_j\trans.
\end{align*}
As for the first term,
\begin{align*}
    \eqref{form:item_1}=\frac{2}{n+1}\sum_{j=1}^n\t_j\trans\H^{-1}(z)\t_j+\frac{1}{n+1}\tr\H^{-1}(z).
\end{align*}
By the leave-one-out method,
\begin{align*}
    \H^{-1}(z)=\H_j^{-1}(z)-\frac{\H_j^{-1}(z)\t_j\t_j\trans\H_j^{-1}(z)}{1+\t_j\trans\H_j^{-1}(z)\t_j},
\end{align*}
and then 
\begin{align*}
    \t_j\trans\H^{-1}(z)\t_j=\frac{\t_j\trans\H_j^{-1}(z)\t_j}{1+\t_j\trans\H_j^{-1}(z)\t_j}.
\end{align*}
Since that 
\begin{align*}
    &\E\left|\frac{\t_j\trans\H_j^{-1}(z)\t_j-\frac{1}{n}\tr\H^{-1}(z)}{1+\t_j\trans\H_j^{-1}(z)\t_j}\right|\\
    \lesssim&\E\left|\t_j\trans\H_j^{-1}(z)\t_j-\frac{1}{n}\tr\H_j^{-1}(z)\right|+\frac{1}{n}\E\left|\tr\H_j^{-1}(z)-\tr\H^{-1}(z)\right|\\
    =&O(n^{-\frac{1}{2}}),
\end{align*}
and
\begin{align*}
    &\E\left|\frac{1}{1+\t_j\trans\H_j^{-1}(z)\t_j}-\frac{1}{1+\frac{1}{n}\tr\H^{-1}}\right|\\
    =&\E\left|\frac{\t_j\trans\H_j^{-1}(z)\t_j-\frac{1}{n}\tr\H^{-1}(z)}{\left(1+\t_j\trans\H_j^{-1}(z)\t_j\right)\left(1+\frac{1}{n}\tr\H^{-1}(z)\right)}\right|\\
    \lesssim&\E\left|\t_j\trans\H_j^{-1}(z)\t_j-\frac{1}{n}\tr\H_j^{-1}(z)\right|+\frac{1}{n}\E\left|\tr\H_j^{-1}(z)-\tr\H^{-1}(z)\right|\\
    =&O(n^{-\frac{1}{2}}),
\end{align*}
we have 
\begin{align*}
    \eqref{form:item_1}=\frac{2n}{n+1}\cdot\frac{\frac{1}{n}\tr\H^{-1}(z)}{1+\frac{1}{n}\tr\H^{-1}(z)}+\frac{1}{n+1}\tr\H^{-1}(z)+o_P(1).
\end{align*}
By Theorem 2 of \cite{wu2022limiting}, 
\begin{align*}
    \frac{1}{p}\tr\H^{-1}(z)\to m(z)=\frac{1-y-z+\sqrt{(1+y-z)^2-4y}}{2yz},\quad\text{almost surely}.
\end{align*}
Therefore,
\begin{align*}
    \eqref{form:item_1}\to\frac{2ym(z)}{1+ym(z)}+ym(z),\quad\text{in probability}.
\end{align*}
And similarly,
\begin{align*}
    \eqref{form:item_2}=\frac{3}{n+1}\sum_{j=1}^{n}\t_j\trans\H^{-1}(z)\t_j\to\frac{3ym(z)}{1+ym(z)},\quad\text{in probability}.
\end{align*}
To sum up,
\begin{align*}
    \widehat{L}_n(z)=&\frac{3}{n+1}\tr\left(\U_n-z\bI_p\right)^{-2}\left(\V_n-\U_n\right)+o_P(1)\\
    =&\frac{3}{n+1}\frac{d}{dz}\tr\left(\U_n-z\bI_p\right)^{-1}\left(\V_n-\U_n\right)+o_P(1)\\
    \to&ym'(z)-\frac{ym'(z)}{\left(1+ym(z)\right)^2},\quad\text{in probability}.
\end{align*}
Since 
\begin{align*}
    \underline{s}(z)=\frac{-1+y-z+\sqrt{(1+y-z)^2-4y}}{2z},
\end{align*}
we have
\begin{align*}
    y_0\underline{s}(y_0z)=&\frac{-1+y_0-y_0z+\sqrt{(1+y_0-y_0z)^2-4y_0}}{2y_0z}\\
    =&\frac{-y+1-z+\sqrt{(y+1-z)^2-4y}}{2z}=m(z).
\end{align*}
Therefore,
\begin{align*}
    ym'(z)-\frac{ym'(z)}{\left(1+ym(z)\right)^2}=y_0\underline{s}'(y_0z)-\frac{y_0\underline{s}'(y_0z)}{\left(1+\underline{s}(y_0z)\right)^2}=y_0\widetilde{\mu}(y_0z).
\end{align*}

\textbf{Step 2. Tightness of $\widehat{L}_n(z)$.}

Similar with the step 2 of the proof of Lemma \ref{lem:clt_Mn(z)}, it suffices to verify
\begin{align*}
    \sup_{n;z_1,z_2\in\CC_n}\frac{\E|\widehat{L}_n(z_1)-\widehat{L}_n(z_2)|^2}{|z_1-z_2|^2}<\infty.
\end{align*}
Since 
\begin{align*}
    &\frac{\E|\widehat{L}_n(z_1)-\widehat{L}_n(z_2)|^2}{|z_1-z_2|^2}\\
    =&\E\left|\tr\left(\tbrho_n-z_1\bI_p\right)^{-1}\left(\tbrho_n-z_2\bI_p\right)^{-1}-\tr\left(\brho_n-z_1\bI_p\right)^{-1}\left(\brho_n-z_2\bI_p\right)^{-1}\right|^2\\
    =&\E\left|\sum_{i=1}^{p}\frac{1}{(\widetilde{\lambda}_i-z_1)(\widetilde{\lambda}_i-z_2)}-\sum_{i=1}^{p}\frac{1}{(\lambda_i-z_1)(\lambda_i-z_2)}\right|^2\\
    =&\E\left|\sum_{i=1}^{p}\frac{(\lambda_i+\widetilde{\lambda}_i-z_1-z_2)(\lambda_i-\widetilde{\lambda}_i)}{(\widetilde{\lambda}_i-z_1)(\widetilde{\lambda}_i-z_2)(\lambda_i-z_1)(\lambda_i-z_2)}\right|^2,
\end{align*}
where $\lambda_1\geq\cdots\geq\lambda_p$ and $\widetilde{\lambda}_1\geq\cdots\geq\widetilde{\lambda}_p$ are the eigenvalues of $\brho_n$ and $\tbrho_n$ respectively. By Weyl's inequality,
\begin{align*}
    \left|\lambda_i-\widetilde{\lambda}_i\right|\leq\left\|\brho_n-\tbrho_n\right\|=\frac{3}{n+1}\left\|\K_n+\tbrho_n\right\|.
\end{align*}
By the rigidity of edge of $\K_n$ and $\brho_n$ and the truncation of $\widehat{L}_n(z)$,
\begin{align*}
    \E\left|\sum_{i=1}^{p}\frac{(\lambda_i+\widetilde{\lambda}_i-z_1-z_2)(\lambda_i-\widetilde{\lambda}_i)}{(\widetilde{\lambda}_i-z_1)(\widetilde{\lambda}_i-z_2)(\lambda_i-z_1)(\lambda_i-z_2)}\right|^2\leq C_1p\sum_{i=1}^{p}\E\left|\lambda_i-\widetilde{\lambda}_i\right|^2\leq C_2,
\end{align*}
uniformly in $z_1,z_2\in\CC_n$.

\appendix 
\section{Auxiliary lemmas}
\begin{lem}\label{lem:second_moment}
    For non-random $n\times n$ symmetric matrices $\A$ and $\B$, we have 
    \begin{align*}
    \E \s \trans\A\s=&\tr\bSig\A;\\
    \cov(\s \trans\A\s,\s \trans\B\s)=&2\tr(\A\B)-\frac{6}{5}\tr(\A\circ\B)-\frac{4}{5n}\tr(\A)\tr(\B)+O(1)\|\A\|\|\B\|.
    \end{align*}
\end{lem}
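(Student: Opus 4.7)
The first identity $\E\s\trans\A\s=\tr(\bSig\A)$ is immediate from $\E\s=\mathbf{0}_n$ and $\cov(\s)=\bSig$. For the covariance, my plan is to expand
\[
\E(\s\trans\A\s)(\s\trans\B\s)=\sum_{i,j,k,l}A_{ij}B_{kl}\,\E(s_i s_j s_k s_l)
\]
and classify the terms according to the set-partition pattern of $(i,j,k,l)$. A preliminary reduction simplifies the bookkeeping: since $\one_n\trans\s=0$ deterministically, writing $\P=\bI_n-\one_n\one_n\trans/n$, the quadratic form $\s\trans\A\s$ equals $\s\trans(\P\A\P)\s$. Moreover, a direct computation gives
\[
\tr\!\bigl((\P\A\P)(\P\B\P)\bigr)=\tr(\A\B)-\tfrac{2}{n}\one\trans\A\B\one+\tfrac{1}{n^2}(\one\trans\A\one)(\one\trans\B\one),
\]
and similar identities for $\tr(\A\circ\B)$ and $\tr(\A)\tr(\B)$; each discrepancy is $O(\|\A\|\|\B\|)$ by Cauchy--Schwarz, so it is absorbed into the error. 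Hence we may assume $\A\one=\B\one=0$ throughout.

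The second step computes the five canonical fourth moments. The values $\{s_1,\dots,s_n\}$ are symmetric about zero, so every odd-degree moment vanishes (in particular $\E s_i^2 s_j=0$ for $i\ne j$). The even-degree moments are determined by exchangeability together with the deterministic identities $\sum_i s_i=0$, $\sum_i s_i^2=n$, and $\sum_i s_i^4=3n(3n^2-7)/(5(n^2-1))=:n\mu_4$: telescoping $s_i\sum_j s_j=0$ and $\sum_j s_j^2=n$ yields
\[
\E s_i^3 s_j=\tfrac{-\mu_4}{n-1},\quad \E s_i^2 s_j^2=\tfrac{n-\mu_4}{n-1},\quad \E s_i^2 s_j s_k=\tfrac{2\mu_4-n}{(n-1)(n-2)},\quad \E s_i s_j s_k s_l=\tfrac{-3(2\mu_4-n)}{(n-1)(n-2)(n-3)},
\]
for pairwise distinct indices, with $\mu_4\to 9/5$ as $n\to\infty$.

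The third step is the combinatorial bookkeeping. I decompose $\sum_{ijkl}A_{ij}B_{kl}\,\E(s_is_js_ks_l)=\sum_P T_P S_P$, summing over the $15$ set-partitions $P$ of $\{i,j,k,l\}$, where $T_P$ is the common conditional moment and $S_P=\sum_{(i,j,k,l)\text{ of type }P}A_{ij}B_{kl}$. Under the reduction $\A\one=\B\one=0$ (hence $\one\trans\A=\one\trans\B=0$ by symmetry), every term in $S_P$ involving a row/column sum vanishes, collapsing each sub-pattern sum into a linear combination of $\tr(\A\B)$, $\tr(\A\circ\B)$, $\tr(\A)\tr(\B)$; for example the four three-plus-one patterns jointly contribute $-4\tr(\A\circ\B)$, the three two-plus-two patterns contribute $\tr(\A)\tr(\B)+2\tr(\A\B)-3\tr(\A\circ\B)$, and the all-distinct pattern follows from the closure relation $\sum_P S_P=\one\trans\A\one\cdot\one\trans\B\one=0$.

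Finally, subtracting $(\E\s\trans\A\s)(\E\s\trans\B\s)=[n/(n-1)]^2\tr(\A)\tr(\B)$ and expanding the coefficients of $\tr(\A\B), \tr(\A\circ\B), \tr(\A)\tr(\B)$ in powers of $1/n$ yields leading constants $2$, $\mu_4-3\to-6/5$, and $-4/(5n)$, respectively. The remaining errors in each coefficient are $O(1/n)$, $O(1/n)$, and $O(1/n^2)$, which when multiplied by the worst-case bounds $|\tr(\A\B)|,|\tr(\A\circ\B)|\le n\|\A\|\|\B\|$ and $|\tr(\A)\tr(\B)|\le n^2\|\A\|\|\B\|$ give the stated $O(\|\A\|\|\B\|)$ remainder. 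The main obstacle is the careful enumeration across all 15 sub-patterns---especially the six pair-plus-two-singletons configurations, where without the $\A\one=\B\one=0$ reduction numerous boundary terms in $\one\trans\A\B\one$, $\mathbf{d}_A\trans\B\one$ etc.\ would survive---and verifying that the constants $-6/5$ and $-4/(5n)$ emerge precisely as the excess kurtosis of the rank distribution and the constraint-induced correction.
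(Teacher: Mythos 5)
Your proposal is correct, and it reproduces the paper's result by the same basic mechanism — elementary moment computations driven by the deterministic constraints $\sum_i s_i=0$, $\sum_i s_i^2=n$ and the explicit fourth-power sum (your $\mu_4=3(3n^2-7)/(5(n^2-1))$ is exactly $1+\var(s_1^2)$ obtained in the paper via Faulhaber's formula), followed by a combinatorial expansion of the quadratic forms. The organization, however, is genuinely different: the paper keeps general $\A,\B$, expands $\cov(\s\trans\A\s,\s\trans\B\s)$ through the six covariances $\cov(s_is_j,s_ks_l)$, and explicitly tracks and bounds the boundary functionals $\one_n\trans\B\diag(\A)$, $\one_n\trans\A\B\one_n$, $\one_n\trans\A\one_n\one_n\trans\B\one_n$, etc., at the end; you instead exploit $\one_n\trans\s=0$ to replace $\A,\B$ by their compressions under the centering projection $\bI_n-\one_n\one_n\trans/n$, so that every partition-pattern sum collapses to a combination of $\tr(\A\B)$, $\tr(\A\circ\B)$, $\tr(\A)\tr(\B)$, with the all-distinct pattern recovered from the closure identity — a cleaner bookkeeping device. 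Two caveats, neither fatal: (i) your blanket statement that each discrepancy under the projection is $O(\|\A\|\|\B\|)$ is literally false for $\tr(\A)\tr(\B)$ (it is $O(n\|\A\|\|\B\|)$); it is harmless only because that term carries the coefficient $4/(5n)$, so you should state the transfer at the level of $\tfrac{1}{n}\tr(\A)\tr(\B)$; (ii) the six pair-plus-two-singleton patterns cannot merely be bounded — under the reduction they contribute pattern sums $2(2\tr(\A\circ\B)-\tr\A\tr\B)+4(2\tr(\A\circ\B)-\tr(\A\B))$ weighted by $\E s_1^2s_2s_3\approx-1/n$, and the resulting $+\tfrac{2}{n}\tr\A\tr\B$ is exactly what turns $-(\mu_4+1)/n$ into the stated $(1-\mu_4)/n\to-4/(5n)$ after subtracting $[n/(n-1)]^2\tr\A\tr\B$; you flag this but do not display it, and carrying it out does yield the constants $2$, $\mu_4-3\to-6/5$, $-4/(5n)$ as you claim.
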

\begin{proof}
Noting
\begin{align*}
    \E \s=\mathbf{0}_n,~\cov(\s)=\bSig=\frac{n}{n-1}\left(\bI_n-\frac{1}{n}\one_n\one_n\trans\right),
\end{align*}
we have $ \E \s \trans\A\s=\tr\bSig\A$. Furthermore, we calculate the covariance of the interaction term $ \cov(s_is_j,s_ks_l)$. 

By Faulhaber's formula, 
\begin{align*}
    \var(s_1^2)=\E (s_1^2-1)^2=\frac{144}{n(n-1)^2(n+1)^2}\sum_{i=1}^n\left(i-\frac{n+1}{2}\right)^4-1=\frac{4}{5}-\frac{12}{5(n^2-1)}.
\end{align*}   
Noting $s_1+\ldots+s_n=0$, we have
\begin{align*}
  0=\cov\left(s_1^2,s_1(s_1+\ldots+s_n)\right)=\var(s_1^2)+(n-1)\cov(s_1^2,s_1s_2),
\end{align*}
which yields 
\begin{align*}
   \cov(s_1^2,s_1s_2)=-\frac{1}{n-1}\var(s_1^2).
\end{align*}
Similarly, since $s_1^2+\ldots+s_n^2=n$,
\begin{align*}
  0=\cov\left(s_1^2,s^2_1+\ldots+s^2_n)\right)=\var(s_1^2)+(n-1)\cov(s_1^2,s^2_2),
\end{align*}
which yields 
\begin{align*}
  \cov(s_1^2,s^2_2)=-\frac{1}{n-1}\var(s_1^2).
\end{align*}
By exploiting this trick, we can get
\begin{align*}
\cov(s_1^2,s_2s_3)=&\frac{2}{(n-1)(n-2)}\var(s_1^2),\\
\var(s_1s_2)=&\frac{n(n-2)}{(n-1)^2}-\frac{1}{n-1}\var(s_1^2),\\
\cov(s_1s_2,s_1s_3)=&-\frac{n}{(n-1)^2}+\frac{2}{(n-1)(n-2)}\var(s_1^2),\\
\cov(s_1s_2,s_3s_4)=&\frac{2n}{(n-1)^2(n-3)}-\frac{6}{(n-1)(n-2)(n-3)}\var(s_1^2).
\end{align*}
Now, we are ready to derive the covariance
\begin{align*}
    &\cov(\s \trans\A\s,\s \trans\B\s)=\cov\left(\sum_{i=1}^n a_{ii}s_i^2+2\sum_{i<j} a_{ij}s_is_j,\sum_{k=1}^n b_{kk}s_k^2+2\sum_{k<l} b_{kl}s_ks_l\right)\\
    =&\sum_{i,k} a_{ii}b_{kk}\cov(s_i^2,s_k^2)+2\sum_{i,k<l} a_{ii}b_{kl}\cov(s_i^2,s_ks_l)\\
    &+2\sum_{i<j,k} a_{ij}b_{kk}\cov(s_is_j,s_k^2)+4\sum_{i<j,k<l}a_{ij}b_{kl}\cov(s_is_j,s_ks_l)\\
=&\var(s_1^2)\sum_{i=1}^n a_{ii}b_{ii}+\cov(s_1^2,s^2_2)\sum_{i,k}^* a_{ii}b_{kk}\\
&+2\cov(s_1^2,s_1s_2)\sum_{k,l}^*a_{kk}b_{kl}
+\cov(s_1^2,s_2s_3)\sum_{i,k,l}^*a_{ii}b_{kl}\\ 
&+2\cov(s_1^2,s_1s_2)\sum_{i,j}^*a_{ij}b_{ii}
+\cov(s_1^2,s_2s_3)\sum_{i,j,k}^*a_{ij}b_{kk} \\ 
&+2\var(s_1s_2) \sum_{i,j}^*a_{ij}b_{ij}+4\cov(s_1s_2,s_1s_3)\sum_{i,j,k}^*a_{ij}b_{jk} +\cov(s_1s_2,s_3s_4)\sum_{i,j,k,l}^*a_{ij}b_{kl} \\
=&\left(\var(s_1^2)-\cov(s_1^2,s^2_2)-4\cov(s_1^2,s_2s_3)+4\cov(s_1^2,s_2s_3)-2\var(s_1s_2)+8\cov(s_1s_2,s_1s_3)\right.\\
&\left.-6\cov(s_1s_2,s_3s_4)\right) \tr(\A \circ \B)+\left(\cov(s_1^2,s^2_2)-2(s_1^2,s_2s_3)+\cov(s_1s_2,s_3s_4)\right)\tr(\A)\tr(\B)\\
&+\left(2\var(s_1s_2)-4\cov(s_1s_2,s_1s_3)+2\cov(s_1s_2,s_3s_4)\right)\tr(\A\B)\\
&+\left(2\cov(s_1^2,s_1s_2)-2\cov(s_1^2,s_2s_3)-4\cov(s_1s_2,s_1s_3)+4\cov(s_1s_2,s_3s_4)\right)\one_n\trans\B\diag(\A)\\
&+\left(2\cov(s_1^2,s_1s_2)-2\cov(s_1^2,s_2s_3)-4\cov(s_1s_2,s_1s_3)+4\cov(s_1s_2,s_3s_4)\right)\one_n\trans\A\diag(\B)\\
&+\left(4\cov(s_1s_2,s_1s_3)-4\cov(s_1s_2,s_3s_4)\right)\one_n\trans\A\B\one_n+\cov(s_1s_2,s_3s_4)\one_n\trans\A\one_n\one_n\trans\B\one_n\\
=&2\tr(\A\B)-\frac{6}{5}\tr(\A\circ\B)-\frac{4}{5n}\tr(\A)\tr(\B)+O(1)\|\A\|\|\B\|,
\end{align*}
where we use the facts
\begin{align*}
 \sum_{i,k}^* a_{ii}b_{kk}=&\sum_{i,j} a_{ii}b_{jj}-\sum_{i=1}^na_{ii}b_{ii}= \tr(\A)\tr(\B)-\tr(\A\circ\B);  \\
 \sum_{k,l}^*a_{kk}b_{kl}=&\sum_{i,j}a_{ii}b_{ij}-\sum_{i=1}^na_{ii}b_{ii}=\one_n \trans \B \diag(\A)-\tr(\A\circ\B);\\
 \sum_{i,k,l}^*a_{ii}b_{kl}=& \sum_{i=1}^n a_{ii}\left(\sum_{k,l}b_{kl}+2b_{ii}-2\sum_{k=1}^n b_{il}-\sum_{k=1}^n b_{kk}  \right)\\
 =&\tr(\A) \one_n\trans\B\one_n+2\tr(\A\circ\B)-2\one_n \trans \B \diag(\A)-\tr(\A)\tr(\B);\\
\sum_{i,j}^*a_{ij}b_{ii}=&\one_n \trans \A \diag(\B)-\tr(\A\circ\B);\\
\sum_{i,j,k}^*a_{ij}b_{kk}=&\tr(\B) \one_n\trans\A\one_n+2\tr(\A\circ\B)-2\one_n \trans \A \diag(\B)-\tr(\A)\tr(\B);\\
\sum_{i,j}^*a_{ij}b_{ij}=&\sum_{i,j}a_{ij}b_{ij}-\sum_{i=1}^na_{ii}b_{ii}=\tr(\A\B)-\tr(\A\circ\B);\\
\sum_{i,j,k}^*a_{ij}b_{jk}=&\sum_{i,k}^* \left( \sum_{j=1}^n a_{ij}b_{jk}-a_{ii}b_{ik}-a_{ik}b_{kk} \right)\\
=&\one_n\trans\A\B\one_n-\tr(\A\B)-\one_n\trans\B\diag(\A)-\one_n\trans\A\diag(\B)+2\tr(\A\circ\B)\\
\sum_{i,j,k,l}^*a_{ij}b_{kl}=&\sum_{i,j,k}^*a_{ij}\left(\sum_{l=1}^n b_{kl}-b_{ki}-b_{kj}-b_{kk}\right)\\
=&\sum_{i,j}^*a_{ij}\left(\sum_{k,l}b_{kl}-\sum_{l=1}^nb_{il}-\sum_{l=1}^nb_{jl}\right)-\sum_{i,j,k}^*a_{ij}\left(b_{ki}+b_{kj}+b_{kk}\right)\\
=&\one_n\trans\A\one_n\one_n\trans\B\one_n-2\one_n\trans\A\B\one_n-\tr(\A)\one_n\trans\B\one_n+2\one_n\trans\B\diag(\A)\\
&-2\left(\one_n\trans\A\B\one_n-\tr(\A\B)-\one_n\trans\B\diag(\A)-\one_n\trans\A\diag(\B)+2\tr(\A\circ\B)\right)\\
&-\left(\tr(\B) \one_n\trans\A\one_n+2\tr(\A\circ\B)-2\one_n \trans \A \diag(\B)-\tr(\A)\tr(\B)\right)\\
=&\one_n\trans\A\one_n\one_n\trans\B\one_n-4\one_n\trans\A\B\one_n+4\one_n\trans\A\diag(\B)+4\one_n\trans\B\diag(\A)\\
&-\tr(\A)\one_n\trans\B\one_n-\tr(\B)\one_n\trans\A\one_n-6\tr(\A\circ\B)+2\tr(\A\B)+\tr(\A)\tr(\B)
\end{align*}
and the bounds 
\begin{gather*}
 \tr(\A \circ \B)=O(n)\|\A\|\|\B\|, \tr(\A)\tr(\B)=O(n^2)\|\A\|\|\B\|, \tr(\A\B)= O(n)\|\A\|\|\B\|,\\
 \one_n\trans\B\diag(\A)=O(n)\|\A\|\|\B\|, \one_n\trans\A\diag(\B)=O(n)\|\A\|\|\B\|,\\  \one_n\trans\A\one_n\one_n\trans\B\one_n=O(n^2)\|\A\|\|\B\|,  \one_n\trans\A\B\one_n=O(n)\|\A\|\|\B\|.
\end{gather*}
The proof is completed.
\end{proof}
\begin{lem}\label{lem:higher_moments_control}
    For non-random $n\times n$ matrices $\A_k$, $k=1,\cdots,r$ and $\B_l$, $l=1,\cdots,q$, we have 
    \begin{align}
        \left|\E\prod_{k=1}^r\s_1\trans\A_k\s_1\prod_{l=1}^q\left(\s_1\trans\B_l\s_1-\frac{1}{p}\tr\bSig\B_l\right)\right|\leq Cn^{-\frac{q}{2}+\delta}\prod_{k=1}^r\|\A_k\|\prod_{l=1}^q\|\B_l\|,\label{form:high_moments_control}
    \end{align}
    for arbitrarily small $\delta>0$.
\end{lem}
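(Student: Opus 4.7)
The plan is to reduce the bound to a combinatorial computation on joint moments of the random permutation vector $\s_1$, extending the covariance analysis carried out in Lemma \ref{lem:second_moment}. First I would expand each factor $\s_1\trans\A_k\s_1 = \sum_{i,j}(\A_k)_{ij}s_is_j$ and each centered factor $\s_1\trans\B_l\s_1 - p^{-1}\tr(\bSig\B_l)$ in coordinates, so that the left-hand side becomes a weighted sum over $2(r+q)$-tuples of indices of joint moments $\E\prod_m s_{i_m}$. Because $|s_i|\leq\sqrt{3}$ holds deterministically, no tail issues arise, and the only difficulty is the dependence induced by the permutation structure.

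Next I would evaluate the joint moments. By exchangeability each $\E[s_{i_1}\cdots s_{i_{2t}}]$ depends only on the coincidence pattern of its indices, and the same elementary trick used in Lemma \ref{lem:second_moment}---multiplying by the identities $s_1+\cdots+s_n=0$ or $s_1^2+\cdots+s_n^2=n$ and exploiting exchangeability---produces closed-form rational expressions in $n$. To leading order these moments agree with the i.i.d.\ case, with corrections of size $O(n^{-1})$ per additional block in the coincidence partition. The centering by $p^{-1}\tr(\bSig\B_l)$ kills every configuration in which the two indices attached to a single $\B_l$-factor pair only with each other and with no other factor, forcing each of the $q$ centered factors to be matched with at least one other factor. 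Combined with the operator-norm estimate $\sum_{i,j}|(\B_l)_{ij}|^2\leq n\|\B_l\|^2$ and the analogous bound for $\A_k$, each such forced matching saves a factor of $n^{-1/2}$, producing the advertised decay $n^{-q/2}$.

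The main obstacle is the combinatorial bookkeeping required to turn this heuristic into a uniform bound independent of $r$ and $q$. A clean way to handle it is a Hoeffding-type decomposition of each $\s_1\trans\B_l\s_1$ into orthogonal components of increasing order, parallel in spirit to the decomposition $\A_{ij}=\A_i+\A_j+\bvare_{ij}$ used in Section 3.3: the $L^2$-norm of each component is controlled by $\|\B_l\|$ and the operator norm of $\bSig$, and a repeated application of Cauchy--Schwarz together with the bounded-entry assumption then yields bounds on products of such components. An alternative is to couple the ranks $\r_k$ with the ranks of i.i.d.\ uniform random variables and invoke the standard Bai--Silverstein moment bound for quadratic forms in independent entries on the proxy vector, with the $n^{\delta}$ slack absorbing the polylogarithmic overhead of the coupling and of the enumeration over partitions.
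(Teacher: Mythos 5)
Your proposal is a genuinely different route from the paper's, and it is also incomplete. The paper does not rederive the concentration of quadratic forms in the permutation vector from scratch. It simply cites Proposition~2.1 of \cite{bao2019tracy_spearman}, which already gives
\begin{align*}
\E\left|\s_1\trans\B\s_1-\frac{1}{p}\tr\bSig\B\right|^q\leq Cn^{-\frac{q}{2}+\delta}\|\B\|^q
\end{align*}
for a single matrix $\B$. From there, the case $r=0$ is H\"older's inequality (each of the $q$ factors contributes $n^{-1/2+\delta/q}$), and the case $r>0$ follows by induction on $r$: write the last factor as $\s_1\trans\A_r\s_1=\big(\s_1\trans\A_r\s_1-p^{-1}\tr\bSig\A_r\big)+p^{-1}\tr\bSig\A_r$, so the first piece joins the centered factors (increasing $q$ by one, which only helps) while the second is a bounded scalar, and apply the inductive hypothesis to each. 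That reduction is the whole content of the paper's proof, and you miss it entirely.

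Your alternative---coordinate expansion, exchangeability, coincidence-pattern bookkeeping, or a Hoeffding-type decomposition, or a coupling---would, if completed, amount to reproving the Bao et al.\ concentration estimate, and you explicitly leave the ``main obstacle'' open. Two specific concerns with the sketch. First, the claim that centering by $p^{-1}\tr\bSig\B_l$ ``kills every configuration in which the two indices attached to a single $\B_l$-factor pair only with each other'' is the correct intuition for i.i.d.\ mean-zero entries, but the coordinates of $\s_1$ are \emph{dependent}: for $i\neq j$ one has $\E[s_is_j]=-1/(n-1)\neq 0$, so centering does not annihilate such configurations, it only damps each one by $O(n^{-1})$, and controlling the sum of all these corrections uniformly in the coincidence pattern is precisely what is hard. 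Second, the proposed coupling is too vague to assess: a uniform random permutation \emph{is} the rank vector of i.i.d.\ uniforms, so ``coupling the ranks $\r_k$ with the ranks of i.i.d.\ uniform random variables'' does not change the law of $\s_1$ at all; you would instead need to couple the standardized ranks with the underlying i.i.d.\ variables themselves, replace the empirical quantile by the true quantile in the quadratic form, and control the resulting error---none of which is the standard Bai--Silverstein setting and all of which requires a separate argument. The efficient move here is to recognize that the lemma reduces, by induction and H\"older, to a single-factor moment bound that is already available in the literature.
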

\begin{proof}
    For non-random $n\times n$ matrix $\B$,
    by Proposition 2.1 of \cite{bao2019tracy_spearman},
    \begin{align}
        \E\left|\s_1\trans\B\s_1-\frac{1}{p}\tr\bSig\B\right|^q\leq Cn^{-\frac{q}{2}+\delta}\|\B\|^q,\label{form:high_moments_control_origin}
    \end{align}
    for arbitrarily small $\delta>0$. When $r=0$, $q>1$, \eqref{form:high_moments_control} is a consequence of \eqref{form:high_moments_control_origin} and Holder's inequality. If $r>0$, by induction on $r$ we have
    \begin{align*}
        &\left|\E\prod_{k=1}^r\s_1\trans\A_k\s_1\prod_{l=1}^q\left(\s_1\trans\B_l\s_1-\frac{1}{p}\tr\bSig\B_l\right)\right|\\
        \leq&\left|\E\prod_{k=1}^{r-1}\s_1\trans\A_k\s_1(\s_1\trans\A_r\s_1-\frac{1}{p}\tr\bSig\A_r)\prod_{l=1}^q\left(\s_1\trans\B_l\s_1-\frac{1}{p}\tr\bSig\B_l\right)\right|\\
        &+\frac{n}{p}\|\A_r\|\left|\E\prod_{k=1}^{r-1}\s_1\trans\A_k\s_1\prod_{l=1}^q\left(\s_1\trans\B_l\s_1-\frac{1}{p}\tr\bSig\B_l\right)\right|\\
        \leq&Cn^{-\frac{q}{2}+\delta}\prod_{k=1}^r\|\A_k\|\prod_{l=1}^q\|\B_l\|,
    \end{align*}
    which conclude the result.
\end{proof}

\begin{lem}[Theorem 35.12 of \cite{billingsley1995probability}]\label{lem:martingale_clt}
    Suppose for each $n$, $Y_{n1},\cdots,Y_{nr_n}$ is a real martingale difference sequence with respect to $\sigma$-field $\{\F_{n,j}\}$ having finite second moments. If 
    \begin{align*}
        \sum_{j=1}^{r_n}\E\left(Y_{nj}^2|\F_{n,j-1}\right)\to\sigma^2,\quad\text{in probability},
    \end{align*}
    and for each $\varepsilon>0$,
    \begin{align*}
        \sum_{j=1}^{r_n}\E\left(Y_{n,j}^2I(|Y_{n,j}|\geq\varepsilon)\right)\to0,
    \end{align*}
    then 
    \begin{align*}
        \sum_{j=1}^{r_n}Y_{n,j}\to N(o,\sigma^2),\quad\text{in distribution}.
    \end{align*}
\end{lem}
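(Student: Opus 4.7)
My plan is to prove the martingale central limit theorem via characteristic functions, following the standard Lindeberg-type argument. By L\'evy's continuity theorem, it suffices to show pointwise convergence $\varphi_n(t)\defby\E[\exp(itS_n)]\to\exp(-t^2\sigma^2/2)$ for every $t\in\mR$, where $S_n=\sum_{j=1}^{r_n}Y_{n,j}$. The proof splits into a truncation step and a successive-conditioning step, each exploiting exactly one of the two hypotheses.

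First I would truncate. Fix $\varepsilon>0$ and replace each $Y_{n,j}$ by the centered truncation $Y_{n,j}'=Y_{n,j}I(|Y_{n,j}|\le\varepsilon)-\E[Y_{n,j}I(|Y_{n,j}|\le\varepsilon)\mid\F_{n,j-1}]$. This is again a martingale difference array adapted to $\{\F_{n,j}\}$, and the Lindeberg hypothesis forces $\sum_j\E(Y_{n,j}-Y_{n,j}')^2\to0$, so $S_n'\defby\sum_jY_{n,j}'$ satisfies $S_n-S_n'\to0$ in $L^2$. The same bound guarantees that $\sum_j\E[(Y_{n,j}')^2\mid\F_{n,j-1}]\to\sigma^2$ in probability, so the truncated sequence inherits both hypotheses with uniform boundedness $|Y_{n,j}'|\le2\varepsilon$.

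Next I would compute the characteristic function of $S_n'$ by successive conditioning. Writing $\exp(itS_n')=\prod_j\exp(itY_{n,j}')$ and using the Taylor bound $|e^{ix}-1-ix+x^2/2|\le Cx^2(|x|\wedge1)$ together with the martingale property $\E[Y_{n,j}'\mid\F_{n,j-1}]=0$, we obtain
\begin{align*}
\E[\exp(itY_{n,j}')\mid\F_{n,j-1}]=1-\frac{t^2}{2}\E[(Y_{n,j}')^2\mid\F_{n,j-1}]+R_{n,j},
\end{align*}
with $|R_{n,j}|\le C|t|^3\varepsilon\cdot\E[(Y_{n,j}')^2\mid\F_{n,j-1}]$, hence $\sum_j|R_{n,j}|\lesssim\varepsilon|t|^3\sigma^2$. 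Applying the elementary inequality $|\prod_j(1+a_j)-\exp(\sum_ja_j)|\le\exp(\sum_j|a_j|)\sum_j|a_j|^2$ to the complex numbers $a_j=-t^2\E[(Y_{n,j}')^2\mid\F_{n,j-1}]/2+R_{n,j}$, which are uniformly bounded by $O(\varepsilon t^2)$, reduces the problem to showing $\sum_j a_j\to-t^2\sigma^2/2$ in probability, which is exactly the conditional variance hypothesis together with the remainder bound.

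The main obstacle is the interplay between the conditioning and the two hypotheses: one needs the truncation level $\varepsilon$ small enough that the cubic remainder vanishes, but the argument still requires the summed conditional variance of the truncated array to stay close to $\sigma^2$. This is why the Lindeberg condition is used twice, once to justify the $L^2$ replacement $S_n\to S_n'$ and once to bound $\sum_j R_{n,j}$. After letting $\varepsilon\to0$ via a standard diagonal extraction, pointwise convergence of $\varphi_n(t)$ to $\exp(-t^2\sigma^2/2)$ follows by dominated convergence, completing the proof.
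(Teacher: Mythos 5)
The paper does not prove this lemma; it is quoted directly as Theorem 35.12 of Billingsley's textbook, so there is no internal proof to compare against. Your sketch, however, has a genuine gap at the ``successive conditioning'' step. From the identity
\begin{align*}
\E\left[\exp(itY_{n,j}')\mid\F_{n,j-1}\right]=1+a_j,\qquad a_j=-\tfrac{t^2}{2}\E\left[(Y_{n,j}')^2\mid\F_{n,j-1}\right]+R_{n,j},
\end{align*}
you pass to $\prod_j(1+a_j)$ and compare it with $\exp\bigl(\sum_j a_j\bigr)$. But $\E[\exp(itS_n')]$ is \emph{not} $\E\bigl[\prod_j(1+a_j)\bigr]$: the $a_j$ are random, and iterating the tower property only yields nested expressions of the form $\E\bigl[(1+a_1)\E[(1+a_2)\cdots\mid\F_{n,1}]\bigr]$, in which each factor multiplies a \emph{random} conditional expectation rather than a constant. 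The product decomposition you invoke would be valid for an independent array, which is precisely what fails here.

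The standard fix, which is what Billingsley (following McLeish and Brown) actually does, is the multiplicative-martingale or ``tilting'' device: form $W_n=\exp(itS_n')\bigl/\prod_j\E[\exp(itY_{n,j}')\mid\F_{n,j-1}]$. This is the terminal value of a complex martingale with $W_0=1$, hence $\E[W_n]=1$, and one then writes $\E[\exp(itS_n')]-e^{-t^2\sigma^2/2}=\E\bigl[\exp(itS_n')\bigl(1-e^{-t^2\sigma^2/2}V_n^{-1}\bigr)\bigr]$ with $V_n=\prod_j\E[\exp(itY_{n,j}')\mid\F_{n,j-1}]$, and uses $V_n\to e^{-t^2\sigma^2/2}$ in probability together with a boundedness/uniform-integrability argument. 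To make $V_n$ bounded away from zero (so that $V_n^{-1}$ is controlled) one also needs a stopping-time truncation of the predictable quadratic variation — e.g.\ cut off at the first $k$ with $\sum_{j\le k}\E[Y_{n,j}^2\mid\F_{n,j-1}]>2\sigma^2$ — which your sketch omits. The truncation step and the Taylor estimate in your proposal are fine; it is the identification of $\E[\exp(itS_n')]$ with a product of conditional characteristic functions that needs to be replaced by the tilting argument.
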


\begin{proof}[Proof of Theorem \ref{thm:calculation}]
    Let $f(x)=x^k$, the centering term is 
    \begin{align*}
        \int x^kdF_{y_n}(x)=\sum_{j=0}^{k-1}\frac{y_n^j}{j+1}\binom{k}{j}\binom{k-1}{j}.
    \end{align*}
    The asymptotic mean and asymptotic variance can be derived with the same approach in Theorem 1.4 of \cite{pan2008central}, so we omit details here. Then we consider the case of $f(x)=\log(x)$.
    By (3.6) of \cite{bai2009corrections}, the centering term is 
    \begin{align*}
        \int \log(x)dF_{y_n}(x)=\frac{y_n-1}{y_n}\log(1-y_n)-1.
    \end{align*}
    Next, for asymptotic mean and asymptotic variance, we notice that
    \begin{align*}
        \underline{s}'(z)=\frac{\underline{s}^2(z)\left(1+\underline{s}(z)\right)^2}{\left(1+\underline{s}(z)\right)^2-y_0\underline{s}^2(z)}.
    \end{align*}
    Since we consider the case of $y<1$, then $y_0=1/y>1$. when $x>(1+\sqrt{y_0})^2$, we have $0>\underline{s}(x)>-1$, and when $x<(1-\sqrt{y_0})^2$, we have $\underline{s}(x)>1/(y_0-1)$. Then we calculate for $k\geq 2$,
    \begin{align*}
    \int\frac{\log(z)\underline{s}'(z)}{\left(1+\underline{s}\right)^k}dz=&\int\frac{\log(z(\underline{s}))}{(1+\underline{s})^k}d\underline{s}\\
    =&\frac{1}{k-1}\int\frac{(\underline{s}+1)^2-y_0\underline{s}^2}{(\underline{s}+1)^k}\cdot\frac{1}{\underline{s}\left((y_0-1)\underline{s}-1\right)}d\underline{s}\\
    =&\frac{2\pi i}{k-1}\left[1-(\frac{y_0-1}{y_0})^{k-1}\right],
    \end{align*}
    and for $k=1$,
    \begin{align*}
        \int\frac{\log(z)\underline{s}'(z)}{1+\underline{s}}dz=&\int\frac{\log(z(\underline{s}))}{1+\underline{s}}d\underline{s}\\
    =&\int\frac{\left((\underline{s}+1)^2-y_0\underline{s}^2\right)\log(1+\underline{s})}{\underline{s}+1}\cdot\frac{1}{\underline{s}\left((y_0-1)\underline{s}-1\right)}d\underline{s}\\
    =&-2\pi i\log(1-\frac{1}{y_0}).
    \end{align*}
    As for $\mu_{\log}$ and $\widetilde{\mu}_{\log}$, with similar rouine in section 5 of \cite{bai2004clt},
    \begin{align*}
        -\frac{1}{2\pi i}\int\log(yz)\mu_1(z)dz=&\frac{1}{2\pi}\int_{(1-\sqrt{y_0})^2}^{(1+\sqrt{y_0})^2}\frac{1}{x}\arg\left(1-\frac{y_0\underline{s}^2(x)}{\left(1+\underline{s}(x)\right)^2}\right)dx\\
    =&\frac{1}{2\pi}\int_{(1-\sqrt{y_0})^2}^{(1+\sqrt{y_0})^2}\frac{1}{x}\arctan\left(\frac{x-1-y_0}{\sqrt{4y_0-(x-1-y_0)^2}}\right)dx\\
    =&\frac{1}{2}\log(1-\frac{1}{y_0}).
    \end{align*}
    For other terms,
    \begin{align*}
        -\frac{1}{2\pi i}\int\log(yz)\mu_2(z)dz=&\frac{y_0}{\pi i}\int\frac{\log(z)\underline{s}(z)\underline{s}'(z)}{\left(1+\underline{s}(z)\right)^3}dz\\
    =&\frac{y_0}{\pi i}\int\frac{\log(z(\underline{s}))}{(1+\underline{s})^2}d\underline{s}-\frac{y_0}{\pi i}\int\frac{\log(z(\underline{s}))}{(1+\underline{s})^3}d\underline{s}\\
    =&\frac{1}{y_0},\\
        -\frac{1}{2\pi i}\int\log(yz)\mu_3(z)dz=&-\frac{1}{2\pi i}\int\frac{\log(z)\underline{s}(z)\underline{s}'(z)}{\left(1+\underline{s}(z)\right)^2}dz\\
        =&-\frac{1}{2\pi i}\int\frac{\log(z(\underline{s}))}{1+\underline{s}}d\underline{s}+\frac{1}{2\pi i}\int\frac{\log(z(\underline{s}))}{(1+\underline{s})^2}d\underline{s}\\
        =&\log(1-\frac{1}{y_0})+\frac{1}{y_0},\\
        -\frac{1}{2\pi i}\int\log(yz)\widetilde{\mu}(z)dz=&-\frac{1}{2\pi i}\int\frac{\log(z)\underline{s}(z)\underline{s}'(z)}{1+\underline{s}(z)}dz-\frac{1}{2\pi i}\int\frac{\log(z)\underline{s}(z)\underline{s}'(z)}{\left(1+\underline{s}(z)\right)^2}dz\\
        =&-\frac{1}{2\pi i}\int\log(z(\underline{s}))d\underline{s}+\frac{1}{2\pi i}\int\frac{\log(z(\underline{s}))}{(1+\underline{s})^2}d\underline{s}\\
        =&\frac{1}{y_0-1}+\frac{1}{y_0}.
    \end{align*}
    As for $\sigma_{\log}$ and $\widetilde{\sigma}_{\log}$,
    \begin{align*}
        &-\frac{1}{4\pi^2}\iint\log(yz_1)\log(yz_2)\sigma_1(z_1,z_2)dz_1dz_2\\
        =&-\frac{1}{2\pi^2}\int\log(z_2)\underline{s}'(z_2)\int\frac{\log(z_1)\underline{s}'(z_1)}{\left(\underline{s}(z_1)-\underline{s}(z_2)\right)^2}dz_1dz_2\\
        =&-\frac{1}{\pi i}\int\log(z(\underline{s}_2))\left(\frac{1}{\underline{s}_2}-\frac{1}{\underline{s}_2-\frac{1}{y_0-1}}\right)d\underline{s}_2\\
        =&-\frac{1}{\pi i}\int\log(\frac{\underline{s}_2-\frac{1}{y_0-1}}{\underline{s}_2})\left(\frac{1}{\underline{s}_2}-\frac{1}{\underline{s}_2-\frac{1}{y_0-1}}\right)d\underline{s}_2\\
        &+\frac{1}{\pi i}\int\log(\underline{s}_2+1)\left(\frac{1}{\underline{s}_2}-\frac{1}{\underline{s}_2-\frac{1}{y_0-1}}\right)d\underline{s}_2\\
        =&-2\log(1-\frac{1}{y_0}),
    \end{align*}
    and 
    \begin{align*}
        &-\frac{1}{4\pi^2}\iint\log(yz_1)\log(yz_2)\sigma_2(z_1,z_2)dz_1dz_2\\
        =&\frac{y_0}{2\pi^2}\int\frac{\log(yz_1)\underline{s}'(z_1)}{\left(1+\underline{s}(z_1)\right)^2}dz_1\int\frac{\log(yz_2)\underline{s}'(z_2)}{\left(1+\underline{s}(z_2)\right)^2}dz_2\\
        =&-\frac{2}{y_0}.
    \end{align*}
    Collecting all the above terms, we complete our calculations.
\end{proof}

\begin{proof}[Proof of Lemma \ref{lem:approximation}]
    For population $\X=(X_1,\cdots,X_p)\trans$, we write $F_i$ as the distribution function of $X_i$. Since $X_i$ is absolutely continuous respect to the Lebesgue measure, $F_i(X_i)$ is uniformly distributed on $[0,1]$ and $Y_i=\Phi^{-1}(F_i(X_i))$ is a standard Gaussian distribution. Rank statistics are invariant under this monotonic transformation, that is $r(X_i)=r(Y_i)$ for $i=1,\cdots,n$. Therefore, \eqref{form:approx_Un} and \eqref{form:approx_Vn} are obtained in \cite{wu2022limiting} and \cite{li2023eigenvalues}. 
\end{proof}

\bibliography{ref}
\end{document}